\theoremstyle{plain}
\theoremstyle{plain}
\newtheorem{theorem}{Theorem}[section]
\newtheorem{proposition}[theorem]{Proposition}
\newtheorem{lemma}[theorem]{Lemma}
\newtheorem{corollary}[theorem]{Corollary}
\newtheorem{conjecture}[theorem]{Conjecture}
\theoremstyle{remark}
\newtheorem{question}[theorem]{Question}
\theoremstyle{definition}
\newcommand{\cal}{\EuScript}
\let\lim=\relax
\DeclareMathOperator*{\lim}{lim}
\renewcommand{\contentsname}{}
\begin{document}
\title{The union-closed sets conjecture for non-uniform distributions}
\author{Masoud Zargar}\footnotetext{\textit{Acknowledgment}: This project was supported by the University of Southern California. I thank Steven Heilman for his comments on a previous version of this paper.}
\address{University of Southern California, Los Angeles, California, United States}
\email{mzargar@usc.edu}
\renewcommand{\contentsname}{}
\maketitle
\vspace{-0.5cm}
\begin{abstract}
The union-closed sets conjecture, attributed to P\'eter Frankl from 1979, states that for any non-empty finite union-closed family of finite sets not consisting of only the empty set, there is an element that is in at least half of the sets in the family. We prove a version of Frankl's conjecture for families distributed according to any one of infinitely many distributions. As a corollary, in the intersection-closed reformulation of Frankl's conjecture, we obtain that it is true for families distributed according to any one of infinitely many Maxwell--Boltzmann distributions with inverse temperatures bounded below by a positive universal constant. Frankl's original conjecture corresponds to zero inverse temperature.
\end{abstract}
%\vspace{-1cm}
\setcounter{tocdepth}{1}
\tableofcontents
\section{Introduction}\label{intro}
The union-closed sets conjecture of Frankl from 1979 has attracted much interest due to its simplicity and difficulty. It states the following.
\begin{conjecture}\label{ucconj}(Frankl, 1979) For any non-empty finite family of finite sets $\cal{F}\neq\{\emptyset\}$ that is union-closed, that is, for any $A,B\in\cal{F}$ we have $A\cup B\in\cal{F}$, there is an element $i\in\bigcup_{A\in\cal{F}}A$ that is in at least half of the sets in $\cal{F}$.
\end{conjecture}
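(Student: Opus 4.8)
\textbf{A strategy toward Conjecture~\ref{ucconj}.} The plan is to push the information-theoretic method of Gilmer to its conjectural endpoint. Fix a union-closed family $\cal{F}$ with $\bigcup_{A\in\cal{F}}A=[n]$ and, towards a contradiction, assume that every $i\in[n]$ lies in strictly fewer than half of the members of $\cal{F}$. Identify each $A\in\cal{F}$ with its indicator vector in $\{0,1\}^n$, sample $A$ and $B$ independently and uniformly from $\cal{F}$, and set $p_i=\Pr[i\in A]=\Pr[i\in B]<\tfrac12$; note $p_i>0$ since $i\in\bigcup_{A\in\cal{F}}A$. As $\cal{F}$ is union-closed we have $A\cup B\in\cal{F}$, hence the one hard inequality
\[
H(A\cup B)\le \log_2|\cal{F}|=H(A)=H(B).
\]
Fixing a coordinate order and expanding both sides by the chain rule, I would compare $H(A)=\sum_{i=1}^n H(A_i\mid A_{<i})$ with $H(A\cup B)=\sum_{i=1}^n H((A\cup B)_i\mid(A\cup B)_{<i})$ term by term: the bit $(A\cup B)_i=A_i\vee B_i$ is assembled from the two independent bits $A_i,B_i$ and vanishes with probability $(1-p_i)^2$, so, discarding conditioning where it only helps, the contradiction reduces to a pointwise (or density-averaged) estimate for the binary entropy $h(x)=-x\log_2 x-(1-x)\log_2(1-x)$ of the shape $h(1-(1-p)^2)\ge h(p)$ on $[0,\tfrac12)$ carrying a definite margin. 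Summed over $i$, any such margin forces the absurdity $H(A)<H(A)$.

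The analytic core is to make that margin positive all the way up to density $\tfrac12$. The honest bottleneck is that $h(1-(1-p)^2)\ge h(p)$ holds with room to spare only for $p\le\tfrac{3-\sqrt5}{2}\approx0.382$: at that golden-section value one has $1-(1-p)^2=1-p$ exactly, and for larger $p$ the ratio $h(1-(1-p)^2)/h(p)$ dips below $1$, so the one-coordinate comparison --- however it is optimised, as in the refinements of Gilmer's bound --- cannot cross $0.382$. Breaking the barrier means using union-closedness beyond the single instance $A\cup B\in\cal{F}$. The route I would try is a dichotomy: for coordinates with $p_i\le\tfrac{3-\sqrt5}{2}$ keep the pointwise bound, and for the remaining ``dangerous'' coordinates $p_i\in(\tfrac{3-\sqrt5}{2},\tfrac12)$ prove a structural statement --- that a union-closed family whose element-densities are all simultaneously pinned near $\tfrac12$ from below must resemble a power set of an interval, for which Conjecture~\ref{ucconj} is immediate, or else carries enough additional entropy deficit (visible through the joint law of $(A,B,A\cup B)$, or through iterated unions $A_1\cup\cdots\cup A_k\in\cal{F}$ suitably truncated so that they do not simply collapse onto $[n]$) to tip $H(A\cup B)$ strictly above $H(A)$.

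The main obstacle is precisely that structural input. The configurations that make the pointwise step tight are the ``near-cube'' families --- a power set of an interval with a few sets removed so that every density slips just under $\tfrac12$ --- and for these the coordinates of $A\cup B$ are only weakly dependent and the iterates $A_1\cup\cdots\cup A_k$ degenerate quickly, so every cheap way of extracting extra deficit is too lossy. What is missing is a sharp description, or a stability theorem, for union-closed families living entirely in the window $p_i\in(\tfrac{3-\sqrt5}{2},\tfrac12)$; I expect essentially all of the difficulty of the conjecture to be concentrated there, and short of it the same machinery still yields only an element contained in a fraction $\tfrac{3-\sqrt5}{2}$ of the sets in $\cal{F}$.
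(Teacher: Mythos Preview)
The statement you are addressing is Frankl's conjecture itself, which the paper does \emph{not} prove; it is stated there only as an open problem, and the paper instead proves variants for certain non-uniform (Maxwell--Boltzmann) distributions on $\cal{F}$. So there is no ``paper's own proof'' of this statement to compare against.

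Your proposal is, as you frankly label it, a strategy rather than a proof, and it does not close the gap. You correctly recapitulate the Gilmer framework and the refinements that reach $\tfrac{3-\sqrt{5}}{2}$, and you correctly locate the barrier: the pointwise/density-averaged entropy comparison cannot be pushed past that value because the extremal configurations (the near-cube families you describe) make the inequality tight. Your proposed route past the barrier --- a dichotomy that keeps the pointwise bound for small-density coordinates and invokes a yet-to-be-proved structural/stability theorem for coordinates with $p_i\in(\tfrac{3-\sqrt{5}}{2},\tfrac{1}{2})$ --- is not an argument but a wish list: the structural input you need is precisely the unknown part of the problem, as you yourself say in the last sentence. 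In particular, ``iterated unions $A_1\cup\cdots\cup A_k$'' collapse to $[n]$ too quickly on the extremal families to give extra deficit, and no stability theorem of the kind you describe is known. One minor technical point: the $\tfrac{3-\sqrt{5}}{2}$ threshold in the actual arguments arises from optimising a functional over \emph{distributions} of conditional probabilities (as in the $F_{1,1}$ case of the paper's $F_{k,m}$), not from the bare inequality $h(1-(1-p)^2)\ge h(p)$ on marginals; your sketch elides the conditioning step, though the location of the barrier is the same.

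In short: this is an accurate summary of the state of the art together with an honest admission that the decisive idea is missing. It is not a proof of Conjecture~\ref{ucconj}.
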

Recently, Gilmer~\cite{Gilmer} used information-theoretic methods to prove the first constant lower bound: there is an element that is in at least $0.01|\cal{F}|$ of the sets. He conjectured that similar methods should lead to the existence of an element that is in at least $\frac{3-\sqrt{5}}{2}|\cal{F}|\approx 0.38197|\cal{F}|$ of the sets. Within a few days, his method was refined by Alweiss--Huang--Selke~\cite{AHS}, Chase--Lovett~\cite{ChaseLovett}, Pebody~\cite{Pebody}, and Sawin~\cite{Sawin}, establishing this improvement. Sawin suggested in~\cite{Sawin} an idea for a further improvement to $\frac{3-\sqrt{5}}{2}$. This was independently investigated by Cambie~\cite{Cambie} and Yu~\cite{Yu}, leading to a constant $\approx 0.38234$.\\
\\
The idea of Gilmer was the following. Suppose $\cal{F}\subseteq 2^{[n]}$. We may then view each $A\in\cal{F}$ as a binary $n$-tuple $(X_1,\hdots,X_n)\in\{0,1\}^n$, thought of as a random variable on $\cal{F}$ with the uniform distribution. If $(Y_1,\hdots,Y_n)$ is an independent copy of the random variable $(X_1,\hdots,X_n)$, we have an inequality of Shannon entropies:
\[H(X_1+Y_1-X_1Y_1,\hdots,X_n+Y_n-X_nY_n)\leq H(X_1,\hdots,X_n).\]
This follows from the fact that $\cal{F}$ being union-closed implies that $(X_1+Y_1-X_1Y_1,\hdots,X_n+Y_n-X_nY_n)$ is a distribution supported on $\cal{F}$, and that the uniform distribution on $\cal{F}$ maximizes entropy. One then uses the chain rule for Shannon entropy to deduce that if no element is in at least $\frac{3-\sqrt{5}}{2}|\cal{F}|$ of the sets, then one obtains the contradiction that
\[H(X_1+Y_1-X_1Y_1,\hdots,X_n+Y_n-X_nY_n)>H(X_1,\hdots,X_n).\]
In his paper, Gilmer conjectured a statement involving KL-divergences from which Frankl's conjecture would have followed. However, Sawin~\cite{Sawin} and Ellis~\cite{Ellis} independently disproved his conjecture. It was also shown by Chase--Lovett~\cite{ChaseLovett} that for families that are \textit{approximately} union-closed (a relaxation of being union-closed), $\frac{3-\sqrt{5}}{2}$ is optimal.\\
\\
These information-theoretic results were preceded by results of Karpas~\cite{Karpas}, Balla--Bollob\'as--Eccles~\cite{BBE}, and many others. Balla--Bollob\'as--Eccles proved that Frankl's conjecture is true for union-closed families $\cal{F}\subseteq 2^{[n]}$ if $|\cal{F}|\geq \frac{2}{3}2^n$, while Karpas strengthened this to requiring $|\cal{F}|\geq 2^{n-1}$. Balister--Bollob\'as~\cite{BB} showed  that Frankl's conjecture is true with high probability for randomly chosen union-closed families.\\
\\
Frankl's conjecture is when the family $\cal{F}$ is given the uniform distribution. In this paper, we consider distributions on union-closed families of the following type. Given $\underline{k}:=(k_1,\hdots,k_n),\ \underline{m}:=(m_1,\hdots,m_n)\in\mathbb{N}^n$, and any given non-empty union-closed family of finite sets $\{\emptyset\}\neq\cal{F}\subseteq 2^{[n]}$, assign to each $X\in\cal{F}$ the probability
\[\frac{\prod_{j\in X}\frac{k_j}{m_j}}{\sum_{A\in\cal{F}}\prod_{j\in A}\frac{k_j}{m_j}}.\]
When $\underline{k}=\underline{m}$, this is the uniform distribution on $\cal{F}$. We prove the following version of Frankl's conjecture for an infinite family of such distributions.
\begin{theorem}\label{mainthmA}Suppose $\underline{k}:=(k_1,\hdots,k_n),\ \underline{m}:=(m_1,\hdots,m_n)\in\mathbb{N}^n$ such that for every $i$, $k_i\geq 5$ and $1\leq m_i\leq\sqrt{k_i}$. Then for every non-empty finite union-closed family $\cal{F}$ such that $\{\emptyset\}\neq\cal{F}\subseteq 2^{[n]}$ , there is an $i\in\cup_{A\in\cal{F}}A$ such that
\[\frac{\sum_{A\in\cal{F}:i\in A}\prod_{j\in A}\frac{k_j}{m_j}}{\sum_{A\in\cal{F}}\prod_{j\in A}\frac{k_j}{m_j}}\geq\frac{1}{2}.\]
\end{theorem}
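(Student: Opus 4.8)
The plan is to run Gilmer's entropy argument, with the fact ``the uniform measure maximizes entropy'' replaced by a minimum--relative--entropy property of $\cal{F}$-conditioned product measures. Put $p_i:=\frac{k_i}{k_i+m_i}$, so that $\frac{p_i}{1-p_i}=\frac{k_i}{m_i}\geq\sqrt{k_i}\geq\sqrt 5$, and let $\mu:=\bigotimes_{i=1}^{n}\mathrm{Ber}(p_i)$ be the corresponding product measure on $\{0,1\}^{n}$. Since $\mu(X)=\bigl(\prod_{j}\tfrac{m_j}{k_j+m_j}\bigr)\prod_{j\in X}\tfrac{k_j}{m_j}$ for every $X\subseteq[n]$, the distribution in the statement is exactly $\nu:=\mu(\,\cdot\mid\cal{F})$, and the quantities to be bounded from below are $r_i:=\Pr_{\nu}(X_i=1)$. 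The starting point is the Pythagorean identity $\KL(\rho\,\|\,\mu)=\KL(\rho\,\|\,\nu)+\KL(\nu\,\|\,\mu)$, valid for every probability measure $\rho$ supported on $\cal{F}$ because $\nu/\mu$ is constant there; in particular $\nu$ is the unique minimizer of $\KL(\,\cdot\,\|\,\mu)$ among measures supported on $\cal{F}$.

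Let $X=(X_1,\dots,X_n)\sim\nu$, let $Y$ be an independent copy, and set $Z_i:=X_i+Y_i-X_iY_i$. Since $\cal{F}$ is union-closed, $Z$ is supported on $\cal{F}$, hence $\KL(\mathrm{law}(Z)\,\|\,\mu)\geq\KL(\nu\,\|\,\mu)$. Writing $\KL(\rho\,\|\,\mu)=-H(\rho)-\sum_i\bigl(\rho_i\ln p_i+(1-\rho_i)\ln(1-p_i)\bigr)$ in terms of the marginals $\rho_i=\Pr_\rho(X_i=1)$, using $\Pr(Z_i=1)=2r_i-r_i^{2}$, and noting $\tfrac{p_i}{1-p_i}=\tfrac{k_i}{m_i}$, this inequality rearranges to $H(X)-H(Z)\geq\sum_i r_i(1-r_i)\ln\tfrac{k_i}{m_i}$. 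On the other hand, let $A_i:=\Pr(X_i=1\mid X_1,\dots,X_{i-1})$, let $A_i'$ be the same function of $(Y_1,\dots,Y_{i-1})$ (an independent copy of $A_i$), and let $h(t):=-t\ln t-(1-t)\ln(1-t)$ be the binary entropy in nats. Because $(Z_1,\dots,Z_{i-1})$ is a deterministic function of $(X_1,\dots,X_{i-1},Y_1,\dots,Y_{i-1})$, the chain rule gives
\[H(Z)\ \geq\ \sum_i H\bigl(Z_i\mid X_1,\dots,X_{i-1},Y_1,\dots,Y_{i-1}\bigr)\ =\ \sum_i\mathbb{E}\,h\bigl(A_i+A_i'-A_iA_i'\bigr),\]
while $H(X)=\sum_i\mathbb{E}\,h(A_i)$ and $\mathbb{E}[A_i]=r_i$. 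Combining the two inequalities yields
\[\sum_{i=1}^{n}\Bigl(\mathbb{E}\,h(A_i)-\mathbb{E}\,h(A_i+A_i'-A_iA_i')-r_i(1-r_i)\ln\tfrac{k_i}{m_i}\Bigr)\ \geq\ 0.\]

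Suppose, for contradiction, that $r_i<\tfrac12$ for all $i$ — the negation of the theorem (this is automatic when $i\notin\bigcup_{A\in\cal{F}}A$, where $r_i=0$). It then suffices to prove the one-variable estimate: \emph{for every $[0,1]$-valued random variable $A$ with $\mathbb{E}[A]=r\leq\tfrac12$, every independent copy $A'$ of $A$, and all integers $k\geq5$, $1\leq m\leq\sqrt k$,}
\[\mathbb{E}\,h(A)-\mathbb{E}\,h(A+A'-AA')\ \leq\ r(1-r)\ln\frac{k}{m},\qquad\text{with strict inequality when }r>0.\]
Applying this with $A=A_i$, $r=r_i$ forces every summand above to be $\leq0$, and strictly $<0$ whenever $i\in\bigcup_{A\in\cal{F}}A$ (a nonempty set, since $\cal{F}\neq\{\emptyset\}$, on which $r_i>0$); this contradicts the inequality $\geq0$. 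Hence some $r_i\geq\tfrac12$, which is the conclusion of Theorem~\ref{mainthmA}.

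The displayed estimate is where the hypotheses $k_i\geq5$ and $m_i\leq\sqrt{k_i}$ are used, and it is the step I expect to be the main obstacle. I would prove it in two stages. First, following the analysis in the refinements of Gilmer's method \cite{AHS,ChaseLovett,Pebody,Sawin}, a perturbation/variational argument (the functional $A\mapsto\mathbb{E}\,h(A)-\mathbb{E}\,h(A+A'-AA')$ being affine in the law of $A$ up to a quadratic correction) reduces the verification to measures $A$ supported on only a bounded number of points, turning the claim into a finite-dimensional inequality. Second, an explicit estimate on that reduced inequality, using $\ln\tfrac{k}{m}\geq\tfrac12\ln k\geq\tfrac12\ln5$ together with the monotonicity and concavity of $h$ and of $h'$. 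The underlying phenomenon is that $\mathbb{E}\,h(A)-\mathbb{E}\,h(A+A'-AA')$, over laws of mean $r<\tfrac12$, is maximal (or nearly so) at the point mass at $r$, where it equals $h(r)-h(2r-r^{2})$; this is precisely the deficit — of size at most $h(\tfrac12)-h(\tfrac34)=\ln2-h(\tfrac34)\approx0.13$ — that prevents the bare Gilmer argument from reaching $\tfrac12$, and the bonus $r(1-r)\ln\tfrac{k}{m}$ coming from the non-uniformity is what absorbs it, the threshold $k\geq5$ being exactly what makes the domination hold for all $r\in(0,\tfrac12)$. Verifying the reduced inequality uniformly in $r$, and making the extreme-point reduction rigorous — the needed techniques are available but delicate — is the crux.
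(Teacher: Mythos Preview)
Your framework is sound and genuinely different from the paper's. The paper encodes the weights $k_j/m_j$ combinatorially, by enlarging $\cal{F}$ to a family $\boldsymbol{\cal{F}}_{\underline{k},\underline{m}}$ over a ring with nilpotents $\varepsilon_i$ and roots of unity $\zeta_i$, and then runs the uniform-maximizes-entropy argument on that enlarged family; the entropy of the product $\boldsymbol{X}_i\boldsymbol{Y}_i$ picks up the richer function $g_{k,m}$ (with its sum over $j=1,\dots,k-1$), and the resulting functional $F_{k,m}(\mu)$ is what gets optimized. You instead keep the original binary family, use the product Bernoulli measure $\mu$ as a reference, and invoke the Pythagorean identity $\KL(\rho\|\mu)=\KL(\rho\|\nu)+\KL(\nu\|\mu)$ to replace ``uniform maximizes entropy'' by ``$\nu$ minimizes $\KL(\cdot\|\mu)$ on $\cal{F}$''. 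Your derivation of $\sum_i\bigl(\mathbb{E}h(A_i)-\mathbb{E}h(A_i+A_i'-A_iA_i')-r_i(1-r_i)\ln\tfrac{k_i}{m_i}\bigr)\geq 0$ is correct, and the reduction to the one-variable estimate is clean.

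The gap is that the one-variable estimate is the entire content, and you have not proved it. Writing $G(\mu):=\mathbb{E}h(A)-\mathbb{E}h(A+A'-AA')$, you need $\sup_{\mu\in\cal{M}_r}G(\mu)<r(1-r)\ln(k/m)$ for every $0<r<\tfrac12$. Your heuristic that the supremum is ``(nearly) at $\delta_r$'' is not justified: $G$ is \emph{convex} on $\cal{M}_r$ (the analogue of the paper's concavity result for $F_{k,m}$, which is $-G$ when $k=m=1$), so the supremum is attained at extreme points, i.e.\ two-point measures --- and $\delta_r$ is only one of infinitely many such. To finish you would need, at minimum, (i) a proof of convexity of $G$ on $\cal{M}_r$ (the paper's Proposition~\ref{concavity} only covers $k\geq3$, so this is not literally available), (ii) a structural lemma narrowing the two-point maximizers to types with $0$ or $1$ in the support (the analogue of Lemma~\ref{measuretypes}), and (iii) the resulting one-parameter verifications. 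That is work of the same nature and comparable length to Sections~3--6 of the paper, carried out for a different functional; the back-of-envelope check at $\delta_r$ (which does satisfy $h(r)-h(2r-r^2)<r(1-r)\ln(5/2)$ on $(0,\tfrac12)$) is encouraging but far from sufficient. In short: elegant reduction, but the ``crux'' you flag is not a detail --- it is the whole proof.
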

Clearly, Frankl's conjecture is equivalent to a dual version for intersection-closed families of finite sets. As we will see, it will be convenient for us in the proof to consider this dual version. In this dual setting, Theorem~\ref{mainthmA} follows from the following theorem. 
\begin{theorem}\label{mainthm}Suppose $\underline{k}:=(k_1,\hdots,k_n),\ \underline{m}:=(m_1,\hdots,m_n)\in\mathbb{N}^n$ such that for every $i$, $k_i\geq 5$ and $1\leq m_i\leq\sqrt{k_i}$. Then for every finite intersection-closed family $\cal{F}\subseteq 2^{[n]}$ with $|\cal{F}|\geq 2$, there is an $i\in\cup_{A\in\cal{F}}A$ such that
\[f_i(\underline{k},\underline{m}):=\frac{\sum_{A\in\cal{F}:i\notin A}\prod_{j\in A}\frac{m_j}{k_j}}{\sum_{A\in\cal{F}}\prod_{j\in A}\frac{m_j}{k_j}}\geq\frac{1}{2}.\]
\end{theorem}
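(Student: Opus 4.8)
The plan is to prove the intersection-closed statement by a weighted version of Gilmer's entropy method. Write $w(A)=\prod_{j\in A}\frac{m_j}{k_j}$, $W=\sum_{A\in\cal F}w(A)$, let $\mu$ be the probability measure on $\cal F$ with $\mu(A)=w(A)/W$, and for each $i$ set $p_i=\Pr_\mu[i\in A]=\frac{\sum_{A\in\cal F:\,i\in A}w(A)}{W}$, so that $f_i(\underline k,\underline m)=1-p_i$ and the task is to find $i\in\bigcup_{A\in\cal F}A$ with $p_i\le\tfrac12$. If some $i$ lies in no member of $\cal F$, then $p_i=0$ and we are done, so assume $\bigcup_{A\in\cal F}A=[n]$. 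Writing $M=\bigcap_{A\in\cal F}A$ and replacing $\cal F$ by $\{A\setminus M:A\in\cal F\}$ on the ground set $[n]\setminus M$ (nonempty because $|\cal F|\ge2$) preserves intersection-closedness, puts $\emptyset$ in the family, and leaves $p_i$ unchanged for every $i\notin M$; so I may assume $\emptyset\in\cal F$, hence $W\ge1$ and $0<p_i<1$ for all $i$. Suppose toward a contradiction that $p_i>\tfrac12$ for every $i$, so that every $p_i\in(\tfrac12,1)$.

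Let $X=(X_1,\dots,X_n)\sim\mu$, let $Y$ be an independent copy of $X$, and set $Z=(X_1Y_1,\dots,X_nY_n)$; because $\cal F$ is intersection-closed, $Z$ is supported on $\cal F$. Since $\mu(A)\propto\exp\!\big(\sum_j X_j(A)\log\tfrac{m_j}{k_j}\big)$ is a Gibbs measure, $\mu$ maximizes $\rho\mapsto H(\rho)+\sum_j\big(\log\tfrac{m_j}{k_j}\big)\mathbb E_\rho[X_j]$ among all distributions $\rho$ supported on $\cal F$; evaluating this functional at the law of $Z$, using $\mathbb E[Z_j]=p_j^2$, $\mathbb E_\mu[X_j]=p_j$, and $\log\tfrac{m_j}{k_j}<0$, gives
\[H(Z)\ \le\ H(X)+\sum_{j=1}^n\Big(\log\tfrac{m_j}{k_j}\Big)\,p_j(1-p_j).\]
On the other hand, let $P_j:=\Pr[X_j=1\mid X_1,\dots,X_{j-1}]$ (so $\mathbb E P_j=p_j$) and let $P_j'$ be the analogous quantity built from $Y$, so that $P_j$ and $P_j'$ are independent with the same law; the chain rule gives $H(X)=\sum_j\mathbb E[h(P_j)]$, while $H(Z)=\sum_j H(Z_j\mid Z_{<j})\ge\sum_j H(Z_j\mid X_{<j},Y_{<j})=\sum_j\mathbb E[h(P_jP_j')]$, since $\Pr[Z_j=1\mid X_{<j},Y_{<j}]=P_jP_j'$ (here $h$ is the binary entropy function and logarithms are to base $2$). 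Combining the two displays and using $m_j\le\sqrt{k_j}$,
\[\sum_{j=1}^n\big(\mathbb E[h(P_j)]-\mathbb E[h(P_jP_j')]\big)\ \ge\ \sum_{j=1}^n\Big(\log\tfrac{k_j}{m_j}\Big)\,p_j(1-p_j)\ \ge\ \tfrac12(\log5)\sum_{j=1}^n p_j(1-p_j).\]

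The contradiction now reduces to a single analytic estimate, which I expect to be the heart of the proof: for every $[0,1]$-valued random variable $P$, with independent copy $P'$ of the same law and $p:=\mathbb E P\in(\tfrac12,1)$,
\[\mathbb E[h(P)]-\mathbb E[h(PP')]\ <\ \log\!\big(\tfrac{k}{m}\big)\,p(1-p)\qquad\text{for all integers }k\ge5,\ 1\le m\le\sqrt k.\]
Granting this, apply it to each $P_j$; since every $p_j\in(\tfrac12,1)$ gives $p_j(1-p_j)>0$, summing contradicts the previous display, and the theorem follows. A first, soft bound toward this estimate comes from the concavity of $h$ together with $h(0)=0$: the chord inequality $h(xy)\ge y\,h(x)$ on $[0,1]$ yields $\mathbb E[h(PP')]\ge p\,\mathbb E[h(P)]$, hence $\mathbb E[h(P)]-\mathbb E[h(PP')]\le(1-p)\,\mathbb E[h(P)]\le(1-p)\,h(p)$; since $h(p)/p<2$ on $(\tfrac12,1)$, this already proves the estimate whenever $k/m\ge4$. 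By $m_j\le\sqrt{k_j}$, the only pairs with $k_j/m_j<4$ are $(k_j,m_j)\in\{(5,2),(6,2),(7,2),(9,3),(10,3),(11,3)\}$, for which $\log(k_j/m_j)\ge\log\tfrac52\approx1.32$, and even here the crude bound suffices once $p_j$ is bounded away from $\tfrac12$; so the genuine difficulty is concentrated in the regime $k/m\in[\tfrac52,4)$ with $p$ close to $\tfrac12$.

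That regime is where I expect essentially all of the work to be. I would handle it by reducing to two-point distributions of $P$ — justified by verifying that the second variation of $\mathbb E[h(P)]-\mathbb E[h(PP')]$, viewed as a functional of the distribution of $P$, along mean-preserving perturbations has a fixed sign — and then checking the resulting two-parameter inequality directly. The reassuring point is the large margin: the supremum of $\big(\mathbb E[h(P)]-\mathbb E[h(PP')]\big)/(p(1-p))$ over all admissible $P$ is about $0.76$, approached only in the degenerate limit where $P$ is a.s.\ constant and $p\to\tfrac12$, comfortably below $\log\tfrac52$; so the estimate is robustly true, and the remaining task is to turn this into a clean calculation.
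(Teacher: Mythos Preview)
Your approach is genuinely different from the paper's, and the setup is correct. The paper never touches the Gibbs variational principle; instead it replaces each coordinate alphabet $\{0,1\}$ by a larger set of size $k_i+m_i$ (nilpotents $\varepsilon_i^j$ and roots of unity $\zeta_i^\ell$) so that the \emph{uniform} measure on the enlarged family pushes forward to your weighted measure $\mu$ on $\cal F$. That makes the entropy inequality $H(XY)\le H(X)$ automatic, at the price of a much more complicated per-coordinate functional
\[
F_{k,m}(\nu)=\mathbb{E}_{\nu\times\nu}\big[g_{k,m}\big]-\mathbb{E}_\nu\big[h_{k,m}\big],
\]
with $g_{k,m}$ a sum of $k{+}1$ terms; the paper then proves $F_{k,m}$ is concave on $\cal M_\phi$, reduces its minimizers to three explicit one-parameter families via Dubins' theorem and a derivative argument, and checks each family by several pages of calculus depending on $k$ and $m$ separately. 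Your route trades the alphabet blow-up for the free-energy inequality $H(Z)\le H(X)+\sum_j p_j(1-p_j)\log\tfrac{m_j}{k_j}$, which is cleaner and makes the target depend only on the ratio $k_j/m_j$.

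The gap, which you identify, is the per-coordinate estimate
\[
\mathbb{E}[h(P)]-\mathbb{E}[h(PP')]\ <\ \log_2\!\tfrac{k}{m}\cdot p(1-p),\qquad p=\mathbb{E}P\in(\tfrac12,1).
\]
Two remarks on this. First, your proposed reduction to two-point laws is justified, but not for the reason you hint at: the functional $\mu\mapsto\mathbb{E}[h(P)]-\mathbb{E}_{\mu\times\mu}[h(PP')]$ is \emph{convex} on $\cal M_p$, since for a signed $\nu$ with $\int d\nu=\int x\,d\nu=0$ one has $\iint h(xy)\,d\nu\,d\nu=-\sum_{n\ge 2}\frac{1}{n(n-1)}\big(\int x^n d\nu\big)^2\le 0$ (expand $-(1-t)\log(1-t)$; the $-xy\log(xy)$ part integrates to $0$). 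So the supremum is attained at an extreme point of $\cal M_p$, i.e.\ a measure supported on at most two points. Second, your claim that the supremum is about $0.76$, attained at $\delta_p$ with $p\to\tfrac12^+$, is a statement about which extreme point wins, and that is exactly the nontrivial part; convexity alone does not single out $\delta_p$ among the two-point laws. Establishing it rigorously will require a case analysis over the two-parameter family of two-point measures with mean $p>\tfrac12$, which is the analogue of the paper's Sections~4--6. The margin you cite ($0.76$ versus $\log_2\tfrac52\approx 1.32$) is real and makes this credible, but it is not yet a proof.
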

Taking $k_1=\hdots=k_n=k$ and $m_1=\hdots=m_n=m$, we obtain the following corollary.
\begin{corollary}\label{maincor}Let $k\geq 5$ and $1\leq m\leq \sqrt{k}$ be integers, and suppose $\cal{F}$ is a finite intersection-closed family of finite sets with $|\cal{F}|\geq 2$. Then there is an $i\in\cup_{A\in\cal{F}}A$ such that
\begin{equation}\label{mainineqthm}
f_i\left(\frac{m}{k}\right):=\frac{\sum_{A\in\cal{F}:i\notin A}\left(\frac{m}{k}\right)^{|A|}}{\sum_{A\in\cal{F}}\left(\frac{m}{k}\right)^{|A|}}\geq\frac{1}{2}.
\end{equation}
\end{corollary}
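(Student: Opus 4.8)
The plan is to deduce Corollary~\ref{maincor} directly from Theorem~\ref{mainthm} by specializing the parameter vectors to be constant. First I would set $\underline{k} = (k,\dots,k)$ and $\underline{m} = (m,\dots,m)$ in $\mathbb{N}^n$, and check that the coordinatewise hypotheses of Theorem~\ref{mainthm} reduce exactly to the stated ones: $k \geq 5$ is the same as $k_i \geq 5$ for all $i$, and $1 \leq m \leq \sqrt{k}$ is the same as $1 \leq m_i \leq \sqrt{k_i}$ for all $i$. So Theorem~\ref{mainthm} applies to the intersection-closed family $\cal{F} \subseteq 2^{[n]}$ with $|\cal{F}| \geq 2$.

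Next I would observe that for every $A \in \cal{F}$ the weight simplifies,
\[\prod_{j\in A}\frac{m_j}{k_j} = \prod_{j\in A}\frac{m}{k} = \left(\frac{m}{k}\right)^{|A|},\]
so that $f_i(\underline{k},\underline{m})$ of Theorem~\ref{mainthm} coincides with $f_i(m/k)$ of~\eqref{mainineqthm}. Then Theorem~\ref{mainthm} produces an $i \in \cup_{A\in\cal{F}}A$ with $f_i(m/k) \geq \tfrac12$, which is precisely the claim. There is no genuine obstacle: all of the mathematical content lives in Theorem~\ref{mainthm}, and the corollary is a pure specialization — the only thing to verify is the routine bookkeeping that the single-pair constraint $1 \leq m \leq \sqrt{k}$ is the diagonal case of the coordinatewise constraint, and that the product weight collapses to $(m/k)^{|A|}$ under the constant substitution.
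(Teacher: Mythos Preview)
Your proposal is correct and matches the paper's approach exactly: the paper derives Corollary~\ref{maincor} by the one-line remark ``Taking $k_1=\hdots=k_n=k$ and $m_1=\hdots=m_n=m$, we obtain the following corollary,'' and your argument spells out precisely this specialization together with the routine verification that the hypotheses and the weight $\prod_{j\in A} m_j/k_j = (m/k)^{|A|}$ collapse as needed.
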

In particular, from Corollary~\ref{maincor}, we have that given integers $k\geq 5$ and $1\leq m\leq\sqrt{k}$, for any nonempty union-closed family $\cal{F}\neq\{\emptyset\}$ of finite sets, there is an $i\in\cup_{A\in\cal{F}}A$ such that 
\[\frac{\sum_{A\in\cal{F}:i\in A}\left(\frac{k}{m}\right)^{|A|}}{\sum_{A\in\cal{F}}\left(\frac{k}{m}\right)^{|A|}}\geq\frac{1}{2}.\]
\vspace{3mm}
\begin{center}
\textit{In the rest of this paper, our family $\cal{F}$ is a finite \underline{intersection}-closed family of finite sets with $|\cal{F}|\geq 2$.}
\end{center}
\vspace{3mm}
Corollary~\ref{maincor} is related to Maxwell--Boltzmann distributions in the following way. For each $t\in (0,1]$, we may consider the probability distribution on $\cal{F}$ with probability
\[\frac{t^{|X|}}{\sum_{A\in\cal{F}}t^{|A|}}\]
assigned to each $X\in\cal{F}$. When $t=1$, we have the uniform distribution. We may ask about Frankl's conjecture with respect to these distributions for different $t$. Note that since
\[\frac{t^{|X|}}{\sum_{A\in\cal{F}}t^{|A|}}=\frac{e^{-|X|\log \left(\frac{1}{t}\right)}}{\sum_{A\in\cal{F}}e^{-|A|\log \left(\frac{1}{t}\right)}},\]
different values of $t$ give different Maxwell--Boltzmann distribution on $\cal{F}$ with inverse temperature $\beta=\log\left(\frac{1}{t}\right)\geq 0$. From the perspective of statistical mechanics, this gives us the distribution on $\cal{F}$ obtained by assigning the energies $|X|$ to $X\in\cal{F}$, and having maximal entropy conditional on fixed expected energy
\[\frac{\sum_{A\in\cal{F}}|A|e^{-|A|\log \left(\frac{1}{t}\right)}}{\sum_{A\in\cal{F}}e^{-|A|\log \left(\frac{1}{t}\right)}}=\frac{\sum_{A\in\cal{F}}|A|t^{|A|}}{\sum_{A\in\cal{F}}t^{|A|}}.\]
For each $i\in\cup_{A\in\cal{F}}A$, define
\[f_i(t):=\frac{\sum_{A\in\cal{F}:i\notin A}t^{|A|}}{\sum_{A\in\cal{F}}t^{|A|}}.\]
Conjecture~\ref{ucconj} is equivalent to the statement that for any finite intersection-closed family $\cal{F}$ of finite sets with $|\cal{F}|\geq 2$, there is an element $i\in\cup_{A\in\cal{F}}A$ such that
\[f_i(1)\geq\frac{1}{2}.\]
Therefore, Frankl's conjecture corresponds to vanishing inverse temperature $\beta$. Corollary~\ref{maincor} is for low enough temperatures ($\beta=\log\left(\frac{k}{m}\right)\geq\log \left(\frac{k}{\lfloor\sqrt{k}\rfloor}\right)\geq\log\left(\frac{5}{2}\right)\approx 0.91629$), while Frankl's conjecture is the high temperature ($\beta=0$) version of Corollary~\ref{maincor}. Note that even Corollary~\ref{maincor} is neither weaker nor stronger than Frankl's conjecture. Note that $k$ and $m$ are chosen independently of $\cal{F}$, in particular, independently of $n$.\\
\\
Our method applies to other values of $k,m$ with $k>2$; however, we do not obtain $\frac{1}{2}$ in the inequality of Theorem~\ref{mainthm}. For example, for $k=4$ and $m=2$, we obtain that for some $i\in\cup_{A\in\cal{F}}A$,
\begin{equation}\label{km42}
f_i\left(\frac{1}{2}\right):=\frac{\sum_{A\in\cal{F}:i\notin A}\left(\frac{1}{2}\right)^{|A|}}{\sum_{A\in\cal{F}}\left(\frac{1}{2}\right)^{|A|}}\geq 0.469.
\end{equation}
The right hand side of~\eqref{km42} could be $\frac{1}{2}$ if we could take $k=2, m=1$. However, showing the concavity of $F_{2,1}:\cal{M}_{\phi}\rightarrow\mathbb{R}$, if true, in Proposition~\ref{concavity} requires more work than when $k\neq 2$. For $k=5$ and $m=3$ (which does not satisfy $1\leq m\leq \sqrt{k}$), we obtain that for some $i\in\cup_{A\in\cal{F}}A$,
\begin{equation}\label{km53}
f_i\left(\frac{3}{5}\right):=\frac{\sum_{A\in\cal{F}:i\notin A}\left(\frac{3}{5}\right)^{|A|}}{\sum_{A\in\cal{F}}\left(\frac{3}{5}\right)^{|A|}}\geq 0.385.
\end{equation}
\begin{question}For which other distributions not included in the above do we have Frankl's conjecture? For example, is Frankl's conjecture true for intersection-closed families distributed according to Maxwell--Boltzmann distributions with inverse temperatures not addressed in Corollary~\ref{maincor}?
\end{question}
We now sketch our proof of Theorem~\ref{mainthm}. For every pair of $n$-tuples of natural numbers $\underline{k}:=(k_1,\hdots,k_n),\ \underline{m}:=(m_1,\hdots,m_n)\in\mathbb{N}^n$, we consider the commutative ring
\[R_{k,m}:=\mathbb{Z}[\varepsilon_1,\hdots,\varepsilon_n,\zeta_1,\hdots,\zeta_n]/\left<\varepsilon_1^{k_1},\hdots,\varepsilon_n^{k_n},\zeta_1^{m_1}-1,\hdots,\zeta_n^{m_n}-1,\varepsilon_1(\zeta_1-1),\hdots,\varepsilon_n(\zeta_n-1)\right>.\]
The full ring structure is not important. The importance is that each $\zeta_i$ is thought of as primitive $m_i$-th root of unity, $\varepsilon_i$ is a nilpotent element with $k_i$ the smallest natural number such that $\varepsilon_i^{k_i}=0$, and $\varepsilon_i^j\zeta_i^{\ell}=\varepsilon_i^j$ for every $j\geq 1$ and $\ell\geq 0$. We let $\varepsilon_i^0=\zeta_i^0=1$. Let
\[\Theta:R_{k,m}^n\rightarrow\mathbb{Z}^n\]
be the product of $n$ copies of the ring homomorphism $R_{k,m}\rightarrow\mathbb{Z}$ given by sending each $\varepsilon_i$ to $0$ and each $\zeta_i$ to $1$. For each intersection-closed family $\cal{F}$ of subsets of $2^{[n]}$ with $|\cal{F}|\geq 2$, naturally viewed as a set of binary $n$-tuples closed under component-wise multiplication, we consider the set of $n$-tuples
\[\boldsymbol{\cal{F}}_{\underline{k},\underline{m}}:=\Big\{(\boldsymbol{X}^{\underline{k},\underline{m}}_1,\hdots,\boldsymbol{X}^{\underline{k},\underline{m}}_n)\in\prod_{\ell=1}^n\{0,1,\varepsilon_{\ell},\hdots,\varepsilon_{\ell}^{k_{\ell}-1},\zeta_{\ell},\hdots,\zeta_{\ell}^{m_{\ell}-1}\}:\Theta(\boldsymbol{X}^{\underline{k},\underline{m}}_1,\hdots,\boldsymbol{X}^{\underline{k},\underline{m}}_n)\in\cal{F}\Big\}.\]
In words, we are enlarging $\cal{F}$ so that for each $(X_1,\hdots,X_n)\in\cal{F}$, we are including all those $n$-tuples in $\prod_{\ell=1}^n\{0,1,\varepsilon_{\ell},\hdots,\varepsilon_{\ell}^{k_{\ell}-1},\zeta_{\ell},\hdots,\zeta_{\ell}^{m_{\ell}-1}\}$ obtained by replacing each $X_i$ with any positive power of $\varepsilon_i$ if $X_i=0$, and any power of $\zeta_i$ if $X_i=1$. Since $\cal{F}$ is closed under component-wise multiplication, $\boldsymbol{\cal{F}}_{\underline{k},\underline{m}}$ is also closed under component-wise multiplication. There is the natural random variable 
\[(\boldsymbol{X}^{\underline{k},\underline{m}}_1,\hdots,\boldsymbol{X}^{\underline{k},\underline{m}}_n):\boldsymbol{\cal{F}}_{\underline{k},\underline{m}}\hookrightarrow\prod_{\ell=1}^n\{0,1,\varepsilon_{\ell},\hdots,\varepsilon_{\ell}^{k_{\ell}-1},\zeta_{\ell},\hdots,\zeta_{\ell}^{m_{\ell}-1}\}\]
with the uniform distribution on $\boldsymbol{\cal{F}}_{\underline{k},\underline{m}}$. It will turn out that the uniform distribution on $\boldsymbol{\cal{F}}_{\underline{k},\underline{m}}$ is related to $\cal{F}$ equipped with the distribution given in Theorem~\ref{mainthm}.\\
\\
Since $\boldsymbol{\cal{F}}_{\underline{k},\underline{m}}$ is closed under component-wise multiplication, and the uniform distribution on $\boldsymbol{\cal{F}}_{\underline{k},\underline{m}}$ maximizes entropy, we have for any independent uniformly distributed copy $(\boldsymbol{Y}^{\underline{k},\underline{m}}_1,\hdots,\boldsymbol{Y}^{\underline{k},\underline{m}}_n)$ of $(\boldsymbol{X}^{\underline{k},\underline{m}}_1,\hdots,\boldsymbol{X}^{\underline{k},\underline{m}}_n)$ that

\begin{equation}\label{basicineq}
H\left(\boldsymbol{X}^{\underline{k},\underline{m}}_1\boldsymbol{Y}^{\underline{k},\underline{m}}_1,\hdots,\boldsymbol{X}^{\underline{k},\underline{m}}_n\boldsymbol{Y}^{\underline{k},\underline{m}}_n\right)\leq H(\boldsymbol{X}^{\underline{k},\underline{m}}_1,\hdots,\boldsymbol{X}^{\underline{k},\underline{m}}_n).
\end{equation}

By the chain rule, we have the following equalities involving conditional Shannon entropies:

\begin{equation}\label{chain1}
H(\boldsymbol{X}^{\underline{k},\underline{m}}_1,\hdots,\boldsymbol{X}^{\underline{k},\underline{m}}_n)=\sum_{i=1}^nH(\boldsymbol{X}^{\underline{k},\underline{m}}_i|\boldsymbol{X}^{\underline{k},\underline{m}}_{<i})
\end{equation}

and

\begin{eqnarray}
H\left(\boldsymbol{X}^{\underline{k},\underline{m}}_1\boldsymbol{Y}^{\underline{k},\underline{m}}_1,\hdots,\boldsymbol{X}^{\underline{k},\underline{m}}_n\boldsymbol{Y}^{\underline{k},\underline{m}}_n\right)&=&\sum_{i=1}^nH\left(\boldsymbol{X}^{\underline{k},\underline{m}}_i\boldsymbol{Y}^{\underline{k},\underline{m}}_i|\boldsymbol{X}^{\underline{k},\underline{m}}_1\boldsymbol{Y}^{\underline{k},\underline{m}}_1,\hdots,\boldsymbol{X}^{\underline{k},\underline{m}}_{i-1}\boldsymbol{Y}^{\underline{k},\underline{m}}_{i-1}\right)\\
\label{chain2}&\geq&\sum_{i=1}^nH\left(\boldsymbol{X}^{\underline{k},\underline{m}}_i\boldsymbol{Y}^{\underline{k},\underline{m}}_i|\boldsymbol{X}^{\underline{k},\underline{m}}_{<i},\boldsymbol{Y}^{\underline{k},\underline{m}}_{<i}\right),
\end{eqnarray}
where $\boldsymbol{X}^{\underline{k},\underline{m}}_{<i}:=(\boldsymbol{X}^{\underline{k},\underline{m}}_1,\hdots,\boldsymbol{X}^{\underline{k},\underline{m}}_{i-1})$. The last inequality follows from the information processing inequality for conditional Shannon entropies.\\
\\
Consider the space $\cal{M}_{\phi}$ of probability distributions on $[0,1]$ with fixed expectation $\mathop{\mathbb{E}}_{x\sim\mu}[x]=\phi$. $\cal{M}_{\phi}$ is a convex space that is compact with respect to the weak-$*$ topology. Given integers $k,m\geq 1$, consider the functions
\[h_{k,m}(t):=-kt\log t-(1-kt)\log\left(1-kt\right)+(1-kt)\log m\]
and
\begin{eqnarray*}g_{k,m}(x,y)&&:=-(1-kx)(1-ky)\log\left((1-kx)(1-ky)\right)-\left(x+y+\left(\frac{k(k-1)}{2}-1\right)xy\right)\log\left(x+y+\left(\frac{k(k-1)}{2}-1\right)xy\right)\\&&-\sum_{j=1}^{k-1}\left(x+y-(2k-j+1)xy\right)\log\left(x+y-(2k-j+1)xy\right)+(1-kx)(1-ky)\log m.
\end{eqnarray*}
Let $F_{k,m}:\cal{M}_{\phi}\rightarrow\mathbb{R}$ be given by
\[F_{k,m}(\mu):=\mathop{\mathbb{E}}_{(x,y)\sim\mu\times\mu}\Big[g_{k,m}\left(\frac{x}{k},\frac{y}{k}\right)\Big]-\mathop{\mathbb{E}}_{x\sim\mu}\Big[h_{k,m}\left(\frac{x}{k}\right)\Big].\]
When $k=m=1$, this essentially specializes to the functional that appeared in Gilmer~\cite{Gilmer} and subsequent papers by others, and whose optimization led to the $\frac{3-\sqrt{5}}{2}$ bound for Frankl's conjecture. $F_{k,m}$ is a continuous map. In Section~\ref{entropies}, we prove that for each $i$, we have
\[H\left(\boldsymbol{X}^{\underline{k},\underline{m}}_i\boldsymbol{Y}^{\underline{k},\underline{m}}_i|\boldsymbol{X}^{\underline{k},\underline{m}}_{<i},\boldsymbol{Y}^{\underline{k},\underline{m}}_{<i}\right)-H(\boldsymbol{X}^{\underline{k},\underline{m}}_i|\boldsymbol{X}^{\underline{k},\underline{m}}_{<i})=F_{k_i,m_i}(\mu_{\underline{k},\underline{m}}(i)),\]
where
\[\mu_{\underline{k},\underline{m}}(i):=\sum_{\boldsymbol{a}\in\prod_{\ell=1}^{i-1}\{0,1,\varepsilon_{\ell},\hdots,\varepsilon_{\ell}^{k_{\ell}-1},\zeta_{\ell},\hdots,\zeta_{\ell}^{m_{\ell}-1}\}}\mathbb{P}_{\boldsymbol{\cal{F}}_{\underline{k},\underline{m}}}\Big[\boldsymbol{X}^{\underline{k},\underline{m}}_{<i}=\boldsymbol{a}\Big]\delta_{k_i\mathbb{P}_{\boldsymbol{\cal{F}}_{\underline{k},\underline{m}}}\Big[\boldsymbol{X}^{\underline{k},\underline{m}}_i=0|\boldsymbol{X}^{\underline{k},\underline{m}}_{<i}=\boldsymbol{a}\Big]}.\]
Note that $\mathop{\mathbb{E}}_{x\sim\mu_{\underline{k},\underline{m}}(i)}=k_i\mathbb{P}_{\boldsymbol{\cal{F}}_{\underline{k},\underline{m}}}\Big[\boldsymbol{X}^{\underline{k},\underline{m}}_i=0\Big]$. Since $|\cal{F}|\geq 2$, there must be an $i$ such that $k_i\mathbb{P}_{\boldsymbol{\cal{F}}_{\underline{k},\underline{m}}}\Big[\boldsymbol{X}^{\underline{k},\underline{m}}_i=0\Big]>0$.
In this paper, we prove the following general theorem whose proof will be sketched at the end of this introduction.
\begin{theorem}\label{functionalthm}Suppose $k\geq 5$ and $1\leq m\leq\sqrt{k}$ are integers. Then for $0<\phi<\frac{1}{2}$, $F_{k,m}(\mu)>0$ for every $\mu\in\cal{M}_{\phi}$.
\end{theorem}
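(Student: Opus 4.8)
The plan is to use concavity to reduce the optimization over $\cal{M}_{\phi}$ to a finite-dimensional problem, and then to verify an explicit elementary inequality in which the arithmetic constraints on $k,m$ enter. Since $\cal{M}_{\phi}$ is convex and weak-$*$ compact and $F_{k,m}$ is continuous, $F_{k,m}$ attains its minimum on $\cal{M}_{\phi}$; by Proposition~\ref{concavity} it is concave there, so the minimum is attained at an extreme point. Because $\cal{M}_{\phi}$ is the slice of the simplex of probability measures on $[0,1]$ by the single linear constraint $\mathbb{E}_{\mu}[x]=\phi$, its extreme points are exactly the measures supported on at most two points: if $\mu$ had three support points $x_1<x_2<x_3$, there is a nonzero signed measure $\nu$ on $\{x_1,x_2,x_3\}$ with $\nu(\{x_1,x_2,x_3\})=0$ and $\int x\,d\nu=0$, whence $\mu\pm t\nu\in\cal{M}_{\phi}$ for small $t>0$, contradicting extremality. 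So it suffices to prove $F_{k,m}(\mu)>0$ for every $\mu=(1-p)\delta_a+p\delta_b$ with $0\le a\le\phi\le b\le1$ and $p=\tfrac{\phi-a}{b-a}$ (the degenerate case $a=b=\phi$ being $\mu=\delta_{\phi}$).

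Next I would isolate the dependence on $m$. From the definitions, $h_{k,m}(t)=h_{k,1}(t)+(1-kt)\log m$ and $g_{k,m}(s,t)=g_{k,1}(s,t)+(1-ks)(1-kt)\log m$; substituting $s=x/k$, $t=y/k$, integrating against $\mu$ and $\mu\times\mu$, and using $\mathbb{E}_{\mu}[1-x]=1-\phi$ and $\mathbb{E}_{\mu\times\mu}[(1-x)(1-y)]=(1-\phi)^2$, one obtains
\[F_{k,m}(\mu)=F_{k,1}(\mu)-\phi(1-\phi)\log m\qquad\text{for all }\mu\in\cal{M}_{\phi}.\]
Since $1\le m\le\sqrt{k}$ gives $\log m\le\tfrac12\log k$, it is enough to establish
\[F_{k,1}(\mu)>\tfrac12\,\phi(1-\phi)\log k\]
for every two-point $\mu\in\cal{M}_{\phi}$ with $0<\phi<\tfrac12$ and every integer $k\ge5$.

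On such a $\mu$ the left side is an explicit elementary function $\Phi_k(a,b,\phi)$ of $(a,b)$ on the compact triangle $\{0\le a\le\phi\le b\le1\}$, assembled from $u\log u$-terms whose arguments are affine in $a,b$ with coefficients depending only on $k$. The plan is: first dispatch the degenerate point mass $\mu=\delta_{\phi}$, where the claim is a one-variable inequality in $\phi$; then, for genuine two-point distributions, show that $\Phi_k$ has no interior stationary point violating the target bound by controlling the second-order behavior of $\Phi_k$ in $(a,b)$, forcing the minimum onto the edges $a=0$ or $b=1$; on each edge reduce further, pushing toward the corner $(a,b)=(0,1)$, i.e.\ $\mu=(1-\phi)\delta_0+\phi\delta_1$, so as to arrive at a one-variable problem in $\phi$. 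The resulting one-variable inequalities would be settled by a Taylor expansion at $\phi=0$—where both sides vanish, so that the sign of the first nonvanishing coefficient, governed by $k\ge5$, is decisive—together with elementary monotonicity or convexity estimates, or a verified numerical check, on the complementary compact range $\phi\in[\delta,\tfrac12)$.

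The main obstacle is this last step: controlling $\Phi_k(a,b,\phi)$ uniformly over the whole triangle and over all $k\ge5$. The functional is not manifestly convex in $(a,b)$, so the reduction to the boundary is not automatic, and the target bound degenerates as $\phi\to0$, so one needs sharp low-order asymptotics rather than crude estimates. The hypothesis $k\ge5$ is what gives the functional enough room for these estimates to close—and it also underlies the concavity in Proposition~\ref{concavity}—while $m\le\sqrt{k}$ enters only through keeping the penalty $\phi(1-\phi)\log m$ below $\tfrac12\phi(1-\phi)\log k$.
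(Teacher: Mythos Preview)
Your opening moves are correct and match the paper: continuity plus weak-$*$ compactness plus Proposition~\ref{concavity} reduce to extreme points of $\cal{M}_{\phi}$, which are measures supported on at most two points; and the separation $F_{k,m}(\mu)=F_{k,1}(\mu)-\phi(1-\phi)\log m$ is exactly how $m$ enters. The paper invokes Dubins' theorem for the two-point reduction, but your direct argument is fine.

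Where your plan diverges, and where the gap lies, is in the reduction to the edges and beyond. The paper does not merely ``control second-order behavior'' heuristically; its Lemma~\ref{measuretypes} carries out a specific variational computation: for a putative two-point minimizer $p\delta_x+(1-p)\delta_y$ with $0<y<x<1$, one perturbs by $t(\delta_z-\lambda\delta_x-(1-\lambda)\delta_y)$, derives that an auxiliary one-variable function $f_{k,m}(s)$ must lie above its secant through $x$ and $y$, and then shows that $s\,f_{k,m}''(s)$ is \emph{strictly decreasing} when $\phi<\tfrac12$, which is incompatible with the implied sign pattern of $f_{k,m}''$. This is the actual mechanism forcing $0$ or $1$ into the support, and it is not obvious from generic convexity considerations.

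More importantly, the paper does \emph{not} push further to the corner $(a,b)=(0,1)$. After Lemma~\ref{measuretypes} it treats the three families
\[
\text{I: }\delta_{\phi},\qquad\text{II: }\Bigl(1-\tfrac{\phi}{x}\Bigr)\delta_0+\tfrac{\phi}{x}\delta_x,\qquad\text{III: }\Bigl(\tfrac{\phi-x}{1-x}\Bigr)\delta_1+\Bigl(\tfrac{1-\phi}{1-x}\Bigr)\delta_x
\]
separately, each with a tailored argument (concavity in $\phi$ for Type~I; concavity of $x\cdot F_{k,m}$ in $x$ for Type~II; a monotonicity-in-$k$ argument reducing to small $k$ for Type~III). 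There is no indication that the minimum over each edge is attained at the corner, and your proposed Taylor-at-$\phi=0$ plus numerical check on $[\delta,\tfrac12)$ would not by itself cover the two-parameter families II and III uniformly in $x$ and $k$. So the proposal has the right skeleton but the substantive analytic work---Lemma~\ref{measuretypes} and the three case analyses---is precisely what is missing.
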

Theorem~\ref{functionalthm} has the following corollary.
\begin{corollary}\label{mainprop}Suppose $\underline{k},\underline{m}$ are as before such that for some $i$, $k_i\geq 5$ and $1\leq m_i\leq \sqrt{k_i}$. If 
\begin{equation}\label{cond}0<k_i\mathbb{P}_{\boldsymbol{\cal{F}}_{\underline{k},\underline{m}}}\Big[\boldsymbol{X}^{\underline{k},\underline{m}}_i=0\Big]<\frac{1}{2},
\end{equation} 
then
\[H\left(\boldsymbol{X}^{\underline{k},\underline{m}}_i\boldsymbol{Y}^{\underline{k},\underline{m}}_i|\boldsymbol{X}^{\underline{k},\underline{m}}_{<i},\boldsymbol{Y}^{\underline{k},\underline{m}}_{<i}\right)>H(\boldsymbol{X}^{\underline{k},\underline{m}}_i|\boldsymbol{X}^{\underline{k},\underline{m}}_{<i}).\]
\end{corollary}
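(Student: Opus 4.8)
The plan is to obtain Corollary~\ref{mainprop} as a formal consequence of Theorem~\ref{functionalthm} and the entropy identity established in Section~\ref{entropies}; the entire content is verifying that the hypotheses of Theorem~\ref{functionalthm} apply to the measure $\mu_{\underline{k},\underline{m}}(i)$ attached to the index $i$ in the statement. Concretely, I would set $\phi:=k_i\mathbb{P}_{\boldsymbol{\cal{F}}_{\underline{k},\underline{m}}}\Big[\boldsymbol{X}^{\underline{k},\underline{m}}_i=0\Big]$, so that hypothesis~\eqref{cond} is precisely $0<\phi<\frac{1}{2}$, and then check that $\mu_{\underline{k},\underline{m}}(i)\in\cal{M}_{\phi}$, i.e. that it is a probability measure on $[0,1]$ with mean $\phi$.

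That $\mu_{\underline{k},\underline{m}}(i)$ has total mass $1$ is immediate since the weights $\mathbb{P}_{\boldsymbol{\cal{F}}_{\underline{k},\underline{m}}}\Big[\boldsymbol{X}^{\underline{k},\underline{m}}_{<i}=\boldsymbol{a}\Big]$ sum to $1$ over all prefixes $\boldsymbol{a}$, and its mean equals $\phi$ by the identity recorded just after the definition of $\mu_{\underline{k},\underline{m}}(i)$. The one point requiring a moment's care is that each atom, located at $k_i\mathbb{P}_{\boldsymbol{\cal{F}}_{\underline{k},\underline{m}}}\big[\boldsymbol{X}^{\underline{k},\underline{m}}_i=0\mid\boldsymbol{X}^{\underline{k},\underline{m}}_{<i}=\boldsymbol{a}\big]$, actually lies in $[0,1]$ and not merely in $[0,\infty)$; this follows from the construction of $\boldsymbol{\cal{F}}_{\underline{k},\underline{m}}$, because conditional on any prefix $\boldsymbol{a}$ of positive probability, every set in the fiber whose $i$-th coordinate in $\cal{F}$ is $0$ is replaced by $k_i$ enlargements exactly one of which has $i$-th coordinate equal to $0$, so that $\mathbb{P}_{\boldsymbol{\cal{F}}_{\underline{k},\underline{m}}}\big[\boldsymbol{X}^{\underline{k},\underline{m}}_i=0\mid\boldsymbol{X}^{\underline{k},\underline{m}}_{<i}=\boldsymbol{a}\big]\le\frac{1}{k_i}$. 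Granting $\mu_{\underline{k},\underline{m}}(i)\in\cal{M}_{\phi}$, and using that $k_i\ge 5$ and $1\le m_i\le\sqrt{k_i}$, Theorem~\ref{functionalthm} applied with $k=k_i$, $m=m_i$, $\mu=\mu_{\underline{k},\underline{m}}(i)$ yields $F_{k_i,m_i}(\mu_{\underline{k},\underline{m}}(i))>0$. Combining this with the Section~\ref{entropies} identity gives
\[H\left(\boldsymbol{X}^{\underline{k},\underline{m}}_i\boldsymbol{Y}^{\underline{k},\underline{m}}_i\mid\boldsymbol{X}^{\underline{k},\underline{m}}_{<i},\boldsymbol{Y}^{\underline{k},\underline{m}}_{<i}\right)-H\left(\boldsymbol{X}^{\underline{k},\underline{m}}_i\mid\boldsymbol{X}^{\underline{k},\underline{m}}_{<i}\right)=F_{k_i,m_i}(\mu_{\underline{k},\underline{m}}(i))>0,\]
which is exactly the asserted strict inequality.

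I do not expect a genuine obstacle at the level of this corollary: once Theorem~\ref{functionalthm} and the entropy computation of Section~\ref{entropies} are in hand, the deduction is purely formal, and the only nontrivial check is the (short) verification above that the atoms of $\mu_{\underline{k},\underline{m}}(i)$ remain in $[0,1]$, so that the measure lies in the domain $\cal{M}_{\phi}$ on which $F_{k_i,m_i}$ is defined and positive. All the real work is located upstream, in establishing the positivity of the functional $F_{k,m}$ on $\cal{M}_{\phi}$ for $0<\phi<\frac{1}{2}$ (Theorem~\ref{functionalthm}) and in identifying the per-coordinate entropy difference with $F_{k_i,m_i}(\mu_{\underline{k},\underline{m}}(i))$.
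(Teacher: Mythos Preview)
Your proposal is correct and follows essentially the same approach as the paper: the corollary is obtained formally by combining Proposition~\ref{differenceprop} (the entropy identity from Section~\ref{entropies}) with Theorem~\ref{functionalthm}, after observing that $\mu_{\underline{k},\underline{m}}(i)\in\cal{M}_{\phi}$ for $\phi=k_i\mathbb{P}_{\boldsymbol{\cal{F}}_{\underline{k},\underline{m}}}\big[\boldsymbol{X}^{\underline{k},\underline{m}}_i=0\big]$. Your explicit check that each atom lies in $[0,1]$ is a useful detail that the paper leaves implicit.
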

Given $\underline{k}:=(k_1,\hdots,k_n),\ \underline{m}:=(m_1,\hdots,m_n)\in\mathbb{N}^n$ such that for \textit{every} $i$, $k_i\geq 5$ and $1\leq m_i\leq\sqrt{k_i}$, since we cannot have
\begin{equation}\label{basiccontradiction}
H\left(\boldsymbol{X}^{\underline{k},\underline{m}}_1\boldsymbol{Y}^{\underline{k},\underline{m}}_1,\hdots,\boldsymbol{X}^{\underline{k},\underline{m}}_n\boldsymbol{Y}^{\underline{k},\underline{m}}_n\right)>H(\boldsymbol{X}^{\underline{k},\underline{m}}_1,\hdots,\boldsymbol{X}^{\underline{k},\underline{m}}_n),
\end{equation}
Corollary~\ref{mainprop} along with~\eqref{chain1} and~\eqref{chain2} imply that there is an $i$ such that
\[k_i\mathbb{P}_{\boldsymbol{\cal{F}}_{\underline{k},\underline{m}}}\Big[\boldsymbol{X}^{\underline{k},\underline{m}}_i=0\Big]\geq\frac{1}{2}.\]
However,
\[k_i\mathbb{P}_{\boldsymbol{\cal{F}}_{\underline{k},\underline{m}}}\Big[\boldsymbol{X}^{\underline{k},\underline{m}}_i=0\Big]=k_i\frac{\sum_{A\in\cal{F}:i\notin A}\prod_{j\in A}m_j\prod_{i\neq\ell\notin A}k_{\ell}}{\sum_{A\in\cal{F}}\prod_{j\in A}m_j\prod_{\ell\notin A}k_{\ell}}=\frac{\sum_{A\in\cal{F}:i\notin A}\prod_{j\in A}\frac{m_j}{k_j}}{\sum_{A\in\cal{F}}\prod_{j\in A}\frac{m_j}{k_j}}=:f_i(\underline{k},\underline{m}),\]
from which Theorem~\ref{mainthm} follows.\\
\\
Theorem~\ref{functionalthm} will be proved as follows. In Proposition~\ref{concavity}, we prove that for integers $k\geq 3$ and any $m\geq 1$, the continuous map $F_{k,m}:\cal{M}_{\phi}\rightarrow\mathbb{R}$ is concave. Furthermore, $\cal{M}_{\phi}$ is a compact space with respect to the weak-$*$ topology, and so $F_{k,m}$ attains its minimizers in $\cal{M}_{\phi}$. In Lemma~\ref{measuretypes}, we classify the measures that are potential minimizers. When $\phi<\frac{1}{2}$, they turn out to be one of the following three types of finitely-supported probability measures in $\cal{M}_{\phi}$:
\begin{enumerate}[I)]
\item $\delta_{\phi}$;
\item $(1-p)\delta_0+p\delta_x$, where $0<p<1$ and $px=\phi$; and
\item $(1-p)\delta_1+p\delta_x$, where $0<p<1$ and $p=\frac{1-\phi}{1-x}$.
\end{enumerate}
We then bound from below $F_{k,m}$ for each of these three types of measures to obtain Theorem~\ref{functionalthm}.
\section{Difference of entropies}\label{entropies}
In order to prove Corollary~\ref{mainprop}, we begin by rewriting the conditional entropies using equalities~\eqref{zero} and~\eqref{one} to obtain the following proposition. 
\begin{proposition}\label{differenceprop}For each $i$, we have
\[H\left(\boldsymbol{X}^{\underline{k},\underline{m}}_i\boldsymbol{Y}^{\underline{k},\underline{m}}_i|\boldsymbol{X}^{\underline{k},\underline{m}}_{<i},\boldsymbol{Y}^{\underline{k},\underline{m}}_{<i}\right)-H(\boldsymbol{X}^{\underline{k},\underline{m}}_i|\boldsymbol{X}^{\underline{k},\underline{m}}_{<i})=F_{k_i,m_i}(\mu_{\underline{k},\underline{m}}(i)),\]
where
\[\mu_{\underline{k},\underline{m}}(i):=\sum_{\boldsymbol{a}\in\prod_{\ell=1}^{i-1}\{0,1,\varepsilon_{\ell},\hdots,\varepsilon_{\ell}^{k_{\ell}-1},\zeta_{\ell},\hdots,\zeta_{\ell}^{m_{\ell}-1}\}}\mathbb{P}_{\boldsymbol{\cal{F}}_{\underline{k},\underline{m}}}\Big[\boldsymbol{X}^{\underline{k},\underline{m}}_{<i}=\boldsymbol{a}\Big]\delta_{k_i\mathbb{P}_{\boldsymbol{\cal{F}}_{\underline{k},\underline{m}}}\Big[\boldsymbol{X}^{\underline{k},\underline{m}}_i=0|\boldsymbol{X}^{\underline{k},\underline{m}}_{<i}=\boldsymbol{a}\Big]}.\]
\end{proposition}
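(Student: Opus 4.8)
The plan is to compute each side by conditioning on the prefix random variables and recognising the conditional entropies as $h_{k_i,m_i}$ and $g_{k_i,m_i}$ evaluated at the relevant fibre‑probabilities. Fix $i$, abbreviate $\boldsymbol{X}_j,\boldsymbol{Y}_j$ for $\boldsymbol{X}^{\underline{k},\underline{m}}_j,\boldsymbol{Y}^{\underline{k},\underline{m}}_j$, and for a prefix value $\boldsymbol{a}$ of positive probability put $q_{\boldsymbol{a}}:=\mathbb{P}_{\boldsymbol{\cal{F}}_{\underline{k},\underline{m}}}[\boldsymbol{X}_i=0\mid\boldsymbol{X}_{<i}=\boldsymbol{a}]$. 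By definition $\mu_{\underline{k},\underline{m}}(i)$ is the image of $\nu:=\sum_{\boldsymbol{a}}\mathbb{P}_{\boldsymbol{\cal{F}}_{\underline{k},\underline{m}}}[\boldsymbol{X}_{<i}=\boldsymbol{a}]\,\delta_{q_{\boldsymbol{a}}}$ under $t\mapsto k_i t$, so $\mathbb{E}_{x\sim\mu_{\underline{k},\underline{m}}(i)}[\psi(x/k_i)]=\mathbb{E}_{q\sim\nu}[\psi(q)]$ for every $\psi$; I will use this repeatedly to pass between the two descriptions.

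The first step is to identify the conditional law of $\boldsymbol{X}_i$ given $\boldsymbol{X}_{<i}=\boldsymbol{a}$. Because $\boldsymbol{\cal{F}}_{\underline{k},\underline{m}}$ is the blow-up of $\cal{F}$ that replaces a $0$ in coordinate $i$ by one of $0,\varepsilon_i,\hdots,\varepsilon_i^{k_i-1}$ and a $1$ by one of $1,\zeta_i,\hdots,\zeta_i^{m_i-1}$, independently of the other coordinates (the number of completions $\boldsymbol{X}_{>i}$ not depending on the value of $\boldsymbol{X}_i$), the uniform measure on $\boldsymbol{\cal{F}}_{\underline{k},\underline{m}}$ conditioned on $\boldsymbol{X}_{<i}=\boldsymbol{a}$ is uniform within each fibre type: it puts mass $q_{\boldsymbol{a}}$ on each of $0,\varepsilon_i,\hdots,\varepsilon_i^{k_i-1}$ and mass $\tfrac{1-k_iq_{\boldsymbol{a}}}{m_i}$ on each of $1,\zeta_i,\hdots,\zeta_i^{m_i-1}$. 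Hence $H(\boldsymbol{X}_i\mid\boldsymbol{X}_{<i}=\boldsymbol{a})=-k_iq_{\boldsymbol{a}}\log q_{\boldsymbol{a}}-(1-k_iq_{\boldsymbol{a}})\log\tfrac{1-k_iq_{\boldsymbol{a}}}{m_i}=h_{k_i,m_i}(q_{\boldsymbol{a}})$, and averaging over $\boldsymbol{a}$ gives $H(\boldsymbol{X}_i\mid\boldsymbol{X}_{<i})=\mathbb{E}_{q\sim\nu}[h_{k_i,m_i}(q)]=\mathbb{E}_{x\sim\mu_{\underline{k},\underline{m}}(i)}[h_{k_i,m_i}(x/k_i)]$; the same averaging also yields $\mathbb{E}_{x\sim\mu_{\underline{k},\underline{m}}(i)}[x]=k_i\mathbb{P}_{\boldsymbol{\cal{F}}_{\underline{k},\underline{m}}}[\boldsymbol{X}_i=0]$.

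The second step handles the joint term. Conditioning also on $\boldsymbol{Y}_{<i}=\boldsymbol{b}$ and using that $(\boldsymbol{X}_j)_j$ and $(\boldsymbol{Y}_j)_j$ are independent, $\boldsymbol{X}_i$ and $\boldsymbol{Y}_i$ become independent with the laws above, parametrised by $q_{\boldsymbol{a}}$ and $q_{\boldsymbol{b}}$. Now compute the law of $\boldsymbol{X}_i\boldsymbol{Y}_i$ in $R_{k,m}$ from the relations $0\cdot(-)=0$, $\varepsilon_i^a\varepsilon_i^b=\varepsilon_i^{a+b}$ (which is $0$ when $a+b\ge k_i$), $\varepsilon_i^a\zeta_i^b=\varepsilon_i^a$ for $a\ge 1$, and $\zeta_i^a\zeta_i^b=\zeta_i^{a+b\bmod m_i}$, enumerating for each target value the pairs of factor values giving that product. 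One gets
\[\mathbb{P}[\boldsymbol{X}_i\boldsymbol{Y}_i=0]=q_{\boldsymbol{a}}+q_{\boldsymbol{b}}+\Big(\tfrac{k_i(k_i-1)}{2}-1\Big)q_{\boldsymbol{a}}q_{\boldsymbol{b}},\]
where $\tfrac{k_i(k_i-1)}{2}$ counts the pairs $1\le a,b\le k_i-1$ with $a+b\ge k_i$; likewise $\mathbb{P}[\boldsymbol{X}_i\boldsymbol{Y}_i=\varepsilon_i^c]=q_{\boldsymbol{a}}+q_{\boldsymbol{b}}-(2k_i-c+1)q_{\boldsymbol{a}}q_{\boldsymbol{b}}$ for $1\le c\le k_i-1$ (the $c-1$ ways to write $c=a+b$ with $a,b\ge 1$ being absorbed into the coefficient), and $\mathbb{P}[\boldsymbol{X}_i\boldsymbol{Y}_i=\zeta_i^c]=\tfrac{(1-k_iq_{\boldsymbol{a}})(1-k_iq_{\boldsymbol{b}})}{m_i}$ for $0\le c\le m_i-1$. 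After checking that these sum to $1$, plugging them into $-\sum p\log p$ (the $m_i$ equal $\zeta_i^c$‑masses producing the $\log m_i$ term, the $j$-th summand of $g_{k_i,m_i}$ matching $c=j$) gives $H(\boldsymbol{X}_i\boldsymbol{Y}_i\mid\boldsymbol{X}_{<i}=\boldsymbol{a},\boldsymbol{Y}_{<i}=\boldsymbol{b})=g_{k_i,m_i}(q_{\boldsymbol{a}},q_{\boldsymbol{b}})$. Averaging over $(\boldsymbol{a},\boldsymbol{b})$ with weights $\mathbb{P}[\boldsymbol{X}_{<i}=\boldsymbol{a}]\,\mathbb{P}[\boldsymbol{Y}_{<i}=\boldsymbol{b}]$ and applying the pushforward identity gives $H(\boldsymbol{X}_i\boldsymbol{Y}_i\mid\boldsymbol{X}_{<i},\boldsymbol{Y}_{<i})=\mathbb{E}_{(x,y)\sim\mu_{\underline{k},\underline{m}}(i)\times\mu_{\underline{k},\underline{m}}(i)}[g_{k_i,m_i}(x/k_i,y/k_i)]$. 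Subtracting the two results yields $F_{k_i,m_i}(\mu_{\underline{k},\underline{m}}(i))$, as claimed.

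The main obstacle is the bookkeeping in the second step: correctly enumerating, for each value of the product, the contributing pairs of factor values — especially the combinatorics of exponents of $\varepsilon_i$ with a prescribed sum, which is what produces the $\tfrac{k_i(k_i-1)}{2}$ and the $2k_i-c+1$ coefficients — and then matching each $-p\log p$ term to the correct summand of $g_{k_i,m_i}$. Conceptually the only subtlety is that the blow-up is uniform within each fibre type, which is exactly where the precise form of $\boldsymbol{\cal{F}}_{\underline{k},\underline{m}}$ and the use of the uniform distribution enter; after that it is algebra together with the substitution $x\mapsto x/k_i$.
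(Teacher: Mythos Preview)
Your proposal is correct and follows essentially the same approach as the paper: both condition on the prefixes $\boldsymbol{a}$ (and $\boldsymbol{b}$), use the symmetry of the blow-up $\boldsymbol{\cal{F}}_{\underline{k},\underline{m}}$ to obtain the equalities the paper records as \eqref{zero}--\eqref{one}, compute the conditional distribution of $\boldsymbol{X}_i$ and of $\boldsymbol{X}_i\boldsymbol{Y}_i$ exactly as in Lemmas~\ref{HF} and~\ref{HFmin}, and match these to $h_{k_i,m_i}$ and $g_{k_i,m_i}$. One small wording caveat: your parenthetical ``the number of completions $\boldsymbol{X}_{>i}$ not depending on the value of $\boldsymbol{X}_i$'' is only true within a fixed fibre type (the number of completions can differ between the $0$-fibre and the $1$-fibre since $\cal{F}$ is not a product), but that weaker statement is all you actually use, so the argument stands.
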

First, note that for every $i$ and every $\boldsymbol{a}\in\prod_{\ell=1}^{i-1}\{0,1,\varepsilon_{\ell},\hdots,\varepsilon_{\ell}^{k_{\ell}-1},\zeta_{\ell},\hdots,\zeta_{\ell}^{m_{\ell}-1}\}$, we have

\begin{equation}\label{zero}\mathbb{P}_{\boldsymbol{\cal{F}}_{\underline{k},\underline{m}}}\Big[\boldsymbol{X}^{\underline{k},\underline{m}}_i=0|\boldsymbol{X}^{\underline{k},\underline{m}}_{<i}=\boldsymbol{a}\Big]=\mathbb{P}_{\boldsymbol{\cal{F}}_{\underline{k},\underline{m}}}\Big[\boldsymbol{X}^{\underline{k},\underline{m}}_i=\varepsilon_i|\boldsymbol{X}^{\underline{k},\underline{m}}_{<i}=\boldsymbol{a}\Big]=\hdots=\mathbb{P}_{\boldsymbol{\cal{F}}_{\underline{k},\underline{m}}}\Big[\boldsymbol{X}^{\underline{k},\underline{m}}_i=\varepsilon_i^{k_i-1}|\boldsymbol{X}^{\underline{k},\underline{m}}_{<i}=\boldsymbol{a}\Big]\end{equation}

and

\begin{equation}\label{one}\mathbb{P}_{\boldsymbol{\cal{F}}_{\underline{k},\underline{m}}}\Big[\boldsymbol{X}^{\underline{k},\underline{m}}_i=1|\boldsymbol{X}^{\underline{k},\underline{m}}_{<i}=\boldsymbol{a}\Big]=\mathbb{P}_{\boldsymbol{\cal{F}}_{\underline{k},\underline{m}}}\Big[\boldsymbol{X}^{\underline{k},\underline{m}}_i=\zeta_i|\boldsymbol{X}^{\underline{k},\underline{m}}_{<i}=\boldsymbol{a}\Big]=\hdots=\mathbb{P}_{\boldsymbol{\cal{F}}_{\underline{k},\underline{m}}}\Big[\boldsymbol{X}^{\underline{k},\underline{m}}_i=\zeta_i^{m_i-1}|\boldsymbol{X}^{\underline{k},\underline{m}}_{<i}=\boldsymbol{a}\Big].\end{equation}
From these, we have that the quantities in~\eqref{zero} are equal to

\[\frac{1-m_i\mathbb{P}_{\boldsymbol{\cal{F}}_{\underline{k},\underline{m}}}\Big[\boldsymbol{X}^{\underline{k},\underline{m}}_i=1|\boldsymbol{X}^{\underline{k},\underline{m}}_{<i}=\boldsymbol{a}\Big]}{k_i},\]

and the quantities in~\eqref{one} are equal to

\[\frac{1-k_i\mathbb{P}_{\boldsymbol{\cal{F}}_{\underline{k},\underline{m}}}\Big[\boldsymbol{X}^{\underline{k},\underline{m}}_i=0|\boldsymbol{X}^{\underline{k},\underline{m}}_{<i}=\boldsymbol{a}\Big]}{m_i}.\]
This simple observation will be important in our proof of the following two lemmas giving us Proposition~\ref{differenceprop}.
\begin{lemma}\label{HF}

\begin{equation}\label{HFformula}H(\boldsymbol{X}^{\underline{k},\underline{m}}_i|\boldsymbol{X}^{\underline{k},\underline{m}}_{<i})=\sum_{\boldsymbol{a}\in\prod_{\ell=1}^{i-1}\{0,1,\varepsilon_{\ell},\hdots,\varepsilon_{\ell}^{k_{\ell}-1},\zeta_{\ell},\hdots,\zeta_{\ell}^{m_{\ell}-1}\}}\mathbb{P}_{\boldsymbol{\cal{F}}_{\underline{k},\underline{m}}}\Big[\boldsymbol{X}^{\underline{k},\underline{m}}_{<i}=\boldsymbol{a}\Big]h_{k_i,m_i}\left(\mathbb{P}_{\boldsymbol{\cal{F}}_{\underline{k},\underline{m}}}\Big[\boldsymbol{X}^{\underline{k},\underline{m}}_i=0|\boldsymbol{X}^{\underline{k},\underline{m}}_{<i}=\boldsymbol{a}\Big]\right),\end{equation}

\end{lemma}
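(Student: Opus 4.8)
The plan is to compute the conditional entropy $H(\boldsymbol{X}^{\underline{k},\underline{m}}_i|\boldsymbol{X}^{\underline{k},\underline{m}}_{<i})$ directly from the definition and recognize the summand as $h_{k_i,m_i}$ evaluated at the relevant conditional probability. First I would write
\[
H(\boldsymbol{X}^{\underline{k},\underline{m}}_i|\boldsymbol{X}^{\underline{k},\underline{m}}_{<i})=\sum_{\boldsymbol{a}}\mathbb{P}_{\boldsymbol{\cal{F}}_{\underline{k},\underline{m}}}\Big[\boldsymbol{X}^{\underline{k},\underline{m}}_{<i}=\boldsymbol{a}\Big]\, H\Big(\boldsymbol{X}^{\underline{k},\underline{m}}_i\,\Big|\,\boldsymbol{X}^{\underline{k},\underline{m}}_{<i}=\boldsymbol{a}\Big),
\]
the sum being over $\boldsymbol{a}$ in the product set $\prod_{\ell=1}^{i-1}\{0,1,\varepsilon_\ell,\hdots,\varepsilon_\ell^{k_\ell-1},\zeta_\ell,\hdots,\zeta_\ell^{m_\ell-1}\}$, so it suffices to show that for each fixed $\boldsymbol{a}$ the inner entropy equals $h_{k_i,m_i}(t)$, where $t:=\mathbb{P}_{\boldsymbol{\cal{F}}_{\underline{k},\underline{m}}}[\boldsymbol{X}^{\underline{k},\underline{m}}_i=0\mid\boldsymbol{X}^{\underline{k},\underline{m}}_{<i}=\boldsymbol{a}]$.

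The key step is to use the symmetry identities \eqref{zero} and \eqref{one}. Conditioned on $\boldsymbol{X}^{\underline{k},\underline{m}}_{<i}=\boldsymbol{a}$, the random variable $\boldsymbol{X}^{\underline{k},\underline{m}}_i$ takes values in $\{0,\varepsilon_i,\hdots,\varepsilon_i^{k_i-1}\}\cup\{1,\zeta_i,\hdots,\zeta_i^{m_i-1}\}$. By \eqref{zero} each of the $k_i$ values $0,\varepsilon_i,\hdots,\varepsilon_i^{k_i-1}$ has conditional probability $t$, and by \eqref{one} together with the computation recorded just before the lemma, each of the $m_i$ values $1,\zeta_i,\hdots,\zeta_i^{m_i-1}$ has conditional probability $(1-k_it)/m_i$. (Here one uses that $m_i\cdot\mathbb{P}[\boldsymbol{X}_i=1\mid\cdots]=1-k_i t$, which is exactly the second displayed identity preceding the lemma, rearranged.) Therefore
\[
H\Big(\boldsymbol{X}^{\underline{k},\underline{m}}_i\,\Big|\,\boldsymbol{X}^{\underline{k},\underline{m}}_{<i}=\boldsymbol{a}\Big)=-k_i\, t\log t - m_i\cdot\frac{1-k_it}{m_i}\log\!\frac{1-k_it}{m_i}= -k_it\log t-(1-k_it)\log(1-k_it)+(1-k_it)\log m_i,
\]
which is precisely $h_{k_i,m_i}(t)$. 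Substituting back and noting $t=\mathbb{P}_{\boldsymbol{\cal{F}}_{\underline{k},\underline{m}}}[\boldsymbol{X}^{\underline{k},\underline{m}}_i=0\mid\boldsymbol{X}^{\underline{k},\underline{m}}_{<i}=\boldsymbol{a}]$ gives the formula \eqref{HFformula}.

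I do not anticipate a genuine obstacle here; the content is entirely bookkeeping. The one place to be careful is the degenerate cases: if some conditional probability is $0$ (e.g. $1-k_it=0$, which happens when $\boldsymbol{a}$ forces $\boldsymbol{X}^{\underline{k},\underline{m}}_i$ into the "zero" branch, or $t=0$), one uses the standard convention $0\log 0=0$, and one should check that $h_{k_i,m_i}$ as written is continuous and agrees with these conventions at the endpoints $t=0$ and $t=1/k_i$ so that the formula remains valid there. It is also worth remarking that $t$ ranges in $[0,1/k_i]$ precisely because the $m_i$ probabilities $(1-k_it)/m_i$ must be nonnegative, which is implicitly what makes $h_{k_i,m_i}$ the right function on the right domain.
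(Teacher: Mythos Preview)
Your proof is correct and follows essentially the same route as the paper: expand the conditional entropy as a weighted sum over $\boldsymbol{a}$, then use the symmetry identities \eqref{zero}--\eqref{one} to see that conditioned on $\boldsymbol{a}$ the distribution of $\boldsymbol{X}^{\underline{k},\underline{m}}_i$ has $k_i$ atoms of mass $t$ and $m_i$ atoms of mass $(1-k_it)/m_i$, whose entropy is $h_{k_i,m_i}(t)$. Your added remark about the endpoint conventions $0\log 0=0$ is a welcome bit of care that the paper leaves implicit.
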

\begin{proof}By definition, we have the following chain of equalities.
\begin{small}
\begin{eqnarray*}
&&H(\boldsymbol{X}^{\underline{k},\underline{m}}_i|\boldsymbol{X}^{\underline{k},\underline{m}}_{<i})\\
&=&\sum_{\boldsymbol{a}\in\prod_{\ell=1}^{i-1}\{0,1,\varepsilon_{\ell},\hdots,\varepsilon_{\ell}^{k_{\ell}-1},\zeta_{\ell},\hdots,\zeta_{\ell}^{m_{\ell}-1}\}}\mathbb{P}_{\boldsymbol{\cal{F}}_{\underline{k},\underline{m}}}\Big[\boldsymbol{X}^{\underline{k},\underline{m}}_{<i}=\boldsymbol{a}\Big]H(\boldsymbol{X}^{\underline{k},\underline{m}}_i|\boldsymbol{X}^{\underline{k},\underline{m}}_{<i}=\boldsymbol{a})\\
&=&-\sum_{\boldsymbol{a}\in\prod_{\ell=1}^{i-1}\{0,1,\varepsilon_{\ell},\hdots,\varepsilon_{\ell}^{k_{\ell}-1},\zeta_{\ell},\hdots,\zeta_{\ell}^{m_{\ell}-1}\}}\mathbb{P}_{\boldsymbol{\cal{F}}_{\underline{k},\underline{m}}}\Big[\boldsymbol{X}^{\underline{k},\underline{m}}_{<i}=\boldsymbol{a}\Big]\Bigg(\sum_{j=1}^{k_i}\mathbb{P}_{\boldsymbol{\cal{F}}_{\underline{k},\underline{m}}}\Big[\boldsymbol{X}^{\underline{k},\underline{m}}_i=\varepsilon_i^j|\boldsymbol{X}^{\underline{k},\underline{m}}_{<i}=\boldsymbol{a}\Big]\log\left(\mathbb{P}_{\boldsymbol{\cal{F}}_{\underline{k},\underline{m}}}\Big[\boldsymbol{X}^{\underline{k},\underline{m}}_i=\varepsilon_i^j|\boldsymbol{X}^{\underline{k},\underline{m}}_{<i}=\boldsymbol{a}\Big]\right)\\
&&+\sum_{\ell=0}^{m_i-1}\mathbb{P}_{\boldsymbol{\cal{F}}_{\underline{k},\underline{m}}}\Big[\boldsymbol{X}^{\underline{k},\underline{m}}_i=\zeta_i^{\ell}|\boldsymbol{X}^{\underline{k},\underline{m}}_{<i}=\boldsymbol{a}\Big]\log\left(\mathbb{P}_{\boldsymbol{\cal{F}}_{\underline{k},\underline{m}}}\Big[\boldsymbol{X}^{\underline{k},\underline{m}}_i=\zeta_i^{\ell}|\boldsymbol{X}^{\underline{k},\underline{m}}_{<i}=\boldsymbol{a}\Big]\right)\Bigg)\\
&=&-\sum_{\boldsymbol{a}\in\prod_{\ell=1}^{i-1}\{0,1,\varepsilon_{\ell},\hdots,\varepsilon_{\ell}^{k_{\ell}-1},\zeta_{\ell},\hdots,\zeta_{\ell}^{m_{\ell}-1}\}}\mathbb{P}_{\boldsymbol{\cal{F}}_{\underline{k},\underline{m}}}\Big[\boldsymbol{X}^{\underline{k},\underline{m}}_{<i}=\boldsymbol{a}\Big]\Bigg(k_i\mathbb{P}_{\boldsymbol{\cal{F}}_k}\Big[\boldsymbol{X}^{\underline{k},\underline{m}}_i=0|\boldsymbol{X}^{\underline{k},\underline{m}}_{<i}=\boldsymbol{a}\Big]\log\mathbb{P}_{\boldsymbol{\cal{F}}_{\underline{k},\underline{m}}}\Big[\boldsymbol{X}^{\underline{k},\underline{m}}_i=0|\boldsymbol{X}^{\underline{k},\underline{m}}_{<i}=\boldsymbol{a}\Big]\\
&&+\left(1-k_i\mathbb{P}_{\boldsymbol{\cal{F}}_{\underline{k},\underline{m}}}\Big[\boldsymbol{X}^{\underline{k},\underline{m}}_i=0|\boldsymbol{X}_{<i}=\boldsymbol{a}\Big]\right)\log\left(\frac{1-k_i\mathbb{P}_{\boldsymbol{\cal{F}}_{\underline{k},\underline{m}}}\Big[\boldsymbol{X}^{\underline{k},\underline{m}}_i=0|\boldsymbol{X}_{<i}=\boldsymbol{a}\Big]}{m_i}\right)\Bigg)\\
&=&\sum_{\boldsymbol{a}\in\prod_{\ell=1}^{i-1}\{0,1,\varepsilon_{\ell},\hdots,\varepsilon_{\ell}^{k_{\ell}-1},\zeta_{\ell},\hdots,\zeta_{\ell}^{m_{\ell}-1}\}}\mathbb{P}_{\boldsymbol{\cal{F}}_{\underline{k},\underline{m}}}\Big[\boldsymbol{X}^{\underline{k},\underline{m}}_{<i}=\boldsymbol{a}\Big]h_{k_i,m_i}\left(\mathbb{P}_{\boldsymbol{\cal{F}}_{\underline{k},\underline{m}}}\Big[\boldsymbol{X}^{\underline{k},\underline{m}}_i=0|\boldsymbol{X}^{\underline{k},\underline{m}}_{<i}=\boldsymbol{a}\Big]\right),
\end{eqnarray*}
\end{small}
as required.
\end{proof}
We also have the following lemma.
\begin{lemma}\label{HFmin}For every $i$, we have

\begin{eqnarray*}
&&H\left(\boldsymbol{X}^{\underline{k},\underline{m}}_i\boldsymbol{Y}^{\underline{k},\underline{m}}_i|\boldsymbol{X}^{\underline{k},\underline{m}}_{<i},\boldsymbol{Y}^{\underline{k},\underline{m}}_{<i}\right)\\&=&\sum_{\boldsymbol{a},\boldsymbol{b}\in\prod_{\ell=1}^{i-1}\{0,1,\varepsilon_{\ell},\hdots,\varepsilon_{\ell}^{k_{\ell}-1},\zeta_{\ell},\hdots,\zeta_{\ell}^{m_{\ell}-1}\}}\mathbb{P}_{\boldsymbol{\cal{F}}_{\underline{k},\underline{m}}}\Big[\boldsymbol{X}^{\underline{k},\underline{m}}_{<i}=\boldsymbol{a},\boldsymbol{Y}^{\underline{k},\underline{m}}_{<i}=\boldsymbol{b}\Big]g_{k_i,m_i}\left(\mathbb{P}_{\boldsymbol{\cal{F}}_{\underline{k},\underline{m}}}\Big[\boldsymbol{X}^{\underline{k},\underline{m}}_i=0|\boldsymbol{X}^{\underline{k},\underline{m}}_{<i}=\boldsymbol{a}\Big],\mathbb{P}_{\boldsymbol{\cal{F}}_{\underline{k},\underline{m}}}\Big[\boldsymbol{Y}^{\underline{k},\underline{m}}_i=0|\boldsymbol{Y}^{\underline{k},\underline{m}}_{<i}=\boldsymbol{b}\Big]\right),
\end{eqnarray*}

\end{lemma}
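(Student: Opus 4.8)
The plan is to expand the conditional entropy over the possible conditioning pairs $\boldsymbol{a},\boldsymbol{b}$ and, for each such pair, use independence of the two copies to identify $\boldsymbol{X}^{\underline{k},\underline{m}}_i\boldsymbol{Y}^{\underline{k},\underline{m}}_i$ (given the two pasts) as the product of two \emph{independent} $R_{k,m}$-valued random variables whose laws are completely determined by \eqref{zero} and \eqref{one}. By the definition of conditional Shannon entropy,
\[H\big(\boldsymbol{X}^{\underline{k},\underline{m}}_i\boldsymbol{Y}^{\underline{k},\underline{m}}_i\mid\boldsymbol{X}^{\underline{k},\underline{m}}_{<i},\boldsymbol{Y}^{\underline{k},\underline{m}}_{<i}\big)=\sum_{\boldsymbol{a},\boldsymbol{b}}\mathbb{P}_{\boldsymbol{\cal{F}}_{\underline{k},\underline{m}}}\big[\boldsymbol{X}^{\underline{k},\underline{m}}_{<i}=\boldsymbol{a},\,\boldsymbol{Y}^{\underline{k},\underline{m}}_{<i}=\boldsymbol{b}\big]\,H\big(\boldsymbol{X}^{\underline{k},\underline{m}}_i\boldsymbol{Y}^{\underline{k},\underline{m}}_i\mid\boldsymbol{X}^{\underline{k},\underline{m}}_{<i}=\boldsymbol{a},\,\boldsymbol{Y}^{\underline{k},\underline{m}}_{<i}=\boldsymbol{b}\big),\]
the sum over pairs of positive probability. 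Since $(\boldsymbol{X}^{\underline{k},\underline{m}},\boldsymbol{Y}^{\underline{k},\underline{m}})$ is a pair of independent copies, conditionally on $\{\boldsymbol{X}^{\underline{k},\underline{m}}_{<i}=\boldsymbol{a},\boldsymbol{Y}^{\underline{k},\underline{m}}_{<i}=\boldsymbol{b}\}$ the variables $\boldsymbol{X}^{\underline{k},\underline{m}}_i$ and $\boldsymbol{Y}^{\underline{k},\underline{m}}_i$ remain independent, distributed respectively as $\boldsymbol{X}^{\underline{k},\underline{m}}_i$ given $\boldsymbol{X}^{\underline{k},\underline{m}}_{<i}=\boldsymbol{a}$ and as $\boldsymbol{Y}^{\underline{k},\underline{m}}_i$ given $\boldsymbol{Y}^{\underline{k},\underline{m}}_{<i}=\boldsymbol{b}$. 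Writing $p:=\mathbb{P}_{\boldsymbol{\cal{F}}_{\underline{k},\underline{m}}}[\boldsymbol{X}^{\underline{k},\underline{m}}_i=0\mid\boldsymbol{X}^{\underline{k},\underline{m}}_{<i}=\boldsymbol{a}]$ and $q:=\mathbb{P}_{\boldsymbol{\cal{F}}_{\underline{k},\underline{m}}}[\boldsymbol{Y}^{\underline{k},\underline{m}}_i=0\mid\boldsymbol{Y}^{\underline{k},\underline{m}}_{<i}=\boldsymbol{b}]$, equations \eqref{zero} and \eqref{one} (together with the fact that the conditional probabilities sum to $1$) tell us that the first conditional law puts mass $p$ on each of $0,\varepsilon_i,\dots,\varepsilon_i^{k_i-1}$ and mass $(1-k_ip)/m_i$ on each of $1,\zeta_i,\dots,\zeta_i^{m_i-1}$, and likewise the second with $q$ in place of $p$; in particular $k_ip,k_iq\in[0,1]$, so these are genuine probability vectors. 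It thus remains to prove that the pushforward of the product of these two laws under multiplication in $R_{k,m}$ has Shannon entropy exactly $g_{k_i,m_i}(p,q)$.

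I would establish this by computing the pushforward value by value, using $\varepsilon_i^a\varepsilon_i^b=\varepsilon_i^{a+b}$ (which is $0$ as soon as $a+b\ge k_i$), $\zeta_i^a\zeta_i^b=\zeta_i^{a+b\bmod m_i}$, $\varepsilon_i^a\zeta_i^b=\varepsilon_i^a$ for $a\ge1$, and $0z=0$ for all $z$. The product is a power of $\zeta_i$ (including $1$) exactly when both factors are, which by independence has probability $(1-k_ip)(1-k_iq)$, and it is then uniform over the $m_i$ such powers; this contributes $-(1-k_ip)(1-k_iq)\log\big((1-k_ip)(1-k_iq)\big)+(1-k_ip)(1-k_iq)\log m_i$ to the entropy, i.e.\ the first and last terms of $g_{k_i,m_i}$. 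For $1\le j\le k_i-1$, the product equals $\varepsilon_i^j$ either when $\boldsymbol{X}^{\underline{k},\underline{m}}_i=\varepsilon_i^j$ and $\boldsymbol{Y}^{\underline{k},\underline{m}}_i$ is a power of $\zeta_i$, or symmetrically, or when $\boldsymbol{X}^{\underline{k},\underline{m}}_i=\varepsilon_i^a$, $\boldsymbol{Y}^{\underline{k},\underline{m}}_i=\varepsilon_i^b$ with $a,b\ge1$ and $a+b=j$ (there being $j-1$ such pairs), for total mass $p(1-k_iq)+q(1-k_ip)+(j-1)pq=p+q-(2k_i-j+1)pq$, which is the $j$-th summand of the sum appearing in $g_{k_i,m_i}$. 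Finally the product is $0$ when $\boldsymbol{X}^{\underline{k},\underline{m}}_i=0$, or $\boldsymbol{Y}^{\underline{k},\underline{m}}_i=0$ with $\boldsymbol{X}^{\underline{k},\underline{m}}_i\ne0$, or both factors are positive powers of $\varepsilon_i$ with exponent sum $\ge k_i$ (there being $\binom{k_i}{2}=\tfrac{k_i(k_i-1)}{2}$ such pairs), for total mass $p+(1-p)q+\tfrac{k_i(k_i-1)}{2}pq=p+q+\big(\tfrac{k_i(k_i-1)}{2}-1\big)pq$, the second term of $g_{k_i,m_i}$. Summing $-t\log t$ over these values (under the convention $0\log0=0$, consistent on both sides) gives $H\big(\boldsymbol{X}^{\underline{k},\underline{m}}_i\boldsymbol{Y}^{\underline{k},\underline{m}}_i\mid\boldsymbol{X}^{\underline{k},\underline{m}}_{<i}=\boldsymbol{a},\boldsymbol{Y}^{\underline{k},\underline{m}}_{<i}=\boldsymbol{b}\big)=g_{k_i,m_i}(p,q)$, and substituting this into the display above, together with the definitions of $p$ and $q$, yields the claimed identity.

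The only real content is the combinatorial bookkeeping in the second step: verifying that the three cases produce disjoint product values, that the subcases within "product $=\varepsilon_i^j$'' and within "product $=0$'' partition the relevant events, and that the multiplicities $j-1$ and $\tfrac{k_i(k_i-1)}{2}$ are correct (a convenient internal check is that all the masses sum to $1$, which already forces the coefficient $\tfrac{k_i(k_i-1)}{2}-1$). Everything else is the formal behaviour of conditional Shannon entropy together with independence of the two copies; running the same computation with a single factor degenerates to $h_{k_i,m_i}(p)$ and reproves Lemma~\ref{HF}.
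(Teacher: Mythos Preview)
Your proposal is correct and follows essentially the same route as the paper: both expand the conditional entropy over the conditioning pairs $(\boldsymbol{a},\boldsymbol{b})$, use independence of the two copies together with \eqref{zero} and \eqref{one} to reduce to computing the law of the product of two independent $\{0,\varepsilon_i,\dots,\varepsilon_i^{k_i-1},1,\zeta_i,\dots,\zeta_i^{m_i-1}\}$-valued variables, and then verify value by value that this law has entropy $g_{k_i,m_i}(p,q)$, obtaining the same three probability formulas $p+q+\big(\tfrac{k_i(k_i-1)}{2}-1\big)pq$, $p+q-(2k_i-j+1)pq$, and $\tfrac{1}{m_i}(1-k_ip)(1-k_iq)$. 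Your write-up is somewhat more narrative (and includes the helpful sanity check that the masses sum to $1$), but the argument is the same.
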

\begin{proof}By the definition of conditional Shannon entropy, we have
\begin{small}
\begin{eqnarray*}
H\left(\boldsymbol{X}^{\underline{k},\underline{m}}_i\boldsymbol{Y}^{\underline{k},\underline{m}}_i|\boldsymbol{X}^{\underline{k},\underline{m}}_{<i},\boldsymbol{Y}^{\underline{k},\underline{m}}_{<i}\right)&=&\sum_{\boldsymbol{a},\boldsymbol{b}\in\prod_{\ell=1}^{i-1}\{0,1,\varepsilon_{\ell},\hdots,\varepsilon_{\ell}^{k_{\ell}-1},\zeta_{\ell},\hdots,\zeta_{\ell}^{m_{\ell}-1}\}}\mathbb{P}_{\boldsymbol{\cal{F}}_k}\Big[\boldsymbol{X}^{\underline{k},\underline{m}}_{<i}=\boldsymbol{a},\boldsymbol{Y}^{\underline{k},\underline{m}}_{<i}=\boldsymbol{b}\Big]H\left(\boldsymbol{X}^{\underline{k},\underline{m}}_i\boldsymbol{Y}^{\underline{k},\underline{m}}_i|\boldsymbol{X}^{\underline{k},\underline{m}}_{<i}=\boldsymbol{a},\boldsymbol{Y}^{\underline{k},\underline{m}}_{<i}=\boldsymbol{b}\right).
\end{eqnarray*}
\end{small}
Moreover, for every $\boldsymbol{a},\boldsymbol{b}\in\prod_{\ell=1}^{i-1}\{0,1,\varepsilon_{\ell},\hdots,\varepsilon_{\ell}^{k_{\ell}-1},\zeta_{\ell},\hdots,\zeta_{\ell}^{m_{\ell}-1}\}$ we have
\begin{small}
\begin{eqnarray*}
&&H\left(\boldsymbol{X}^{\underline{k},\underline{m}}_i\boldsymbol{Y}^{\underline{k},\underline{m}}_i|\boldsymbol{X}^{\underline{k},\underline{m}}_{<i}=\boldsymbol{a},\boldsymbol{Y}^{\underline{k},\underline{m}}_{<i}=\boldsymbol{b}\right)\\&=&-\sum_{j=1}^{k_i}\mathbb{P}_{\boldsymbol{\cal{F}}_{\underline{k},\underline{m}}}\Big[\boldsymbol{X}^{\underline{k},\underline{m}}_i\boldsymbol{Y}^{\underline{k},\underline{m}}_i=\varepsilon_i^j|\boldsymbol{X}^{\underline{k},\underline{m}}_{<i}=\boldsymbol{a},\boldsymbol{Y}^{\underline{k},\underline{m}}_{<i}=\boldsymbol{b}\Big]\log\left(\mathbb{P}_{\boldsymbol{\cal{F}}_{\underline{k},\underline{m}}}\Big[\boldsymbol{X}^{\underline{k},\underline{m}}_i\boldsymbol{Y}^{\underline{k},\underline{m}}_i=\varepsilon_i^j|\boldsymbol{X}^{\underline{k},\underline{m}}_{<i}=\boldsymbol{a},\boldsymbol{Y}^{\underline{k},\underline{m}}_{<i}=\boldsymbol{b}\Big]\right)\\
&&-\sum_{\ell=0}^{m_i-1}\mathbb{P}_{\boldsymbol{\cal{F}}_{\underline{k},\underline{m}}}\Big[\boldsymbol{X}^{\underline{k},\underline{m}}_i\boldsymbol{Y}^{\underline{k},\underline{m}}_i=\zeta_i^{\ell}|\boldsymbol{X}^{\underline{k},\underline{m}}_{<i}=\boldsymbol{a},\boldsymbol{Y}^{\underline{k},\underline{m}}_{<i}=\boldsymbol{b}\Big]\log\left(\mathbb{P}_{\boldsymbol{\cal{F}}_{\underline{k},\underline{m}}}\Big[\boldsymbol{X}^{\underline{k},\underline{m}}_i\boldsymbol{Y}^{\underline{k},\underline{m}}_i=\zeta_i^{\ell}|\boldsymbol{X}^{\underline{k},\underline{m}}_{<i}=\boldsymbol{a},\boldsymbol{Y}^{\underline{k},\underline{m}}_{<i}=\boldsymbol{b}\Big]\right)
\end{eqnarray*}
\end{small}
Calculating probabilities, we have for $0<j<k_i$
\begin{small}
\begin{eqnarray*}
&&\mathbb{P}_{\boldsymbol{\cal{F}}_{\underline{k},\underline{m}}}\Big[\boldsymbol{X}^{\underline{k},\underline{m}}_i\boldsymbol{Y}^{\underline{k},\underline{m}}_i=\varepsilon_i^j|\boldsymbol{X}^{\underline{k},\underline{m}}_{<i}=\boldsymbol{a},\boldsymbol{Y}^{\underline{k},\underline{m}}_{<i}=\boldsymbol{b}\Big]\\
&=&\sum_{\ell=0}^{m_i-1}\mathbb{P}_{\boldsymbol{\cal{F}}_{\underline{k},\underline{m}}}\Big[\boldsymbol{X}^{\underline{k},\underline{m}}_i=\zeta_i^{\ell}|\boldsymbol{X}^{\underline{k},\underline{m}}_{<i}=\boldsymbol{a}\Big]\mathbb{P}_{\boldsymbol{\cal{F}}_{\underline{k},\underline{m}}}\Big[\boldsymbol{Y}^{\underline{k},\underline{m}}_i=\varepsilon_i^j|\boldsymbol{Y}^{\underline{k},\underline{m}}_{<i}=\boldsymbol{b}\Big]+\sum_{\ell=0}^{m_i-1}\mathbb{P}_{\boldsymbol{\cal{F}}_{\underline{k},\underline{m}}}\Big[\boldsymbol{X}^{\underline{k},\underline{m}}_i=\varepsilon_i^j|\boldsymbol{X}^{\underline{k},\underline{m}}_{<i}=\boldsymbol{a}\Big]\mathbb{P}_{\boldsymbol{\cal{F}}_{\underline{k},\underline{m}}}\Big[\boldsymbol{Y}^{\underline{k},\underline{m}}_i=\zeta_i^{\ell}|\boldsymbol{Y}^{\underline{k},\underline{m}}_{<i}=\boldsymbol{b}\Big]\\
&&+\sum_{\substack{0< u,v<k_i\\u+v=j}}\mathbb{P}_{\boldsymbol{\cal{F}}_{\underline{k},\underline{m}}}\Big[\boldsymbol{X}^{\underline{k},\underline{m}}_i=\varepsilon_i^u|\boldsymbol{X}^{\underline{k},\underline{m}}_{<i}=\boldsymbol{a}\Big]\mathbb{P}_{\boldsymbol{\cal{F}}_{\underline{k},\underline{m}}}\Big[\boldsymbol{Y}^{\underline{k},\underline{m}}_i=\varepsilon_i^v|\boldsymbol{Y}^{\underline{k},\underline{m}}_{<i}=\boldsymbol{b}\Big]\\
&=&m_i\mathbb{P}_{\boldsymbol{\cal{F}}_{\underline{k},\underline{m}}}\Big[\boldsymbol{X}^{\underline{k},\underline{m}}_i=1|\boldsymbol{X}^{\underline{k},\underline{m}}_{<i}=\boldsymbol{a}\Big]\mathbb{P}_{\boldsymbol{\cal{F}}_{\underline{k},\underline{m}}}\Big[\boldsymbol{Y}^{\underline{k},\underline{m}}_i=\varepsilon_i^j|\boldsymbol{Y}^{\underline{k},\underline{m}}_{<i}=\boldsymbol{b}\Big]+m_i\mathbb{P}_{\boldsymbol{\cal{F}}_{\underline{k},\underline{m}}}\Big[\boldsymbol{X}^{\underline{k},\underline{m}}_i=\varepsilon_i^j|\boldsymbol{X}^{\underline{k},\underline{m}}_{<i}=\boldsymbol{a}\Big]\mathbb{P}_{\boldsymbol{\cal{F}}_{\underline{k},\underline{m}}}\Big[\boldsymbol{Y}^{\underline{k},\underline{m}}_i=1|\boldsymbol{Y}^{\underline{k},\underline{m}}_{<i}=\boldsymbol{b}\Big]\\
&&+(j-1)\mathbb{P}_{\boldsymbol{\cal{F}}_{\underline{k},\underline{m}}}\Big[\boldsymbol{X}^{\underline{k},\underline{m}}_i=0|\boldsymbol{X}^{\underline{k},\underline{m}}_{<i}=\boldsymbol{a}\Big]\mathbb{P}_{\boldsymbol{\cal{F}}_{\underline{k},\underline{m}}}\Big[\boldsymbol{Y}^{\underline{k},\underline{m}}_i=0|\boldsymbol{Y}^{\underline{k},\underline{m}}_{<i}=\boldsymbol{b}\Big]\\
&=&\left(1-k_i\mathbb{P}_{\boldsymbol{\cal{F}}_{\underline{k},\underline{m}}}\Big[\boldsymbol{X}^{\underline{k},\underline{m}}_i=0|\boldsymbol{X}^{\underline{k},\underline{m}}_{<i}=\boldsymbol{a}\Big]\right)\mathbb{P}_{\boldsymbol{\cal{F}}_{\underline{k},\underline{m}}}\Big[\boldsymbol{Y}^{\underline{k},\underline{m}}_i=0|\boldsymbol{Y}^{\underline{k},\underline{m}}_{<i}=\boldsymbol{b}\Big]+\mathbb{P}_{\boldsymbol{\cal{F}}_{\underline{k},\underline{m}}}\Big[\boldsymbol{X}^{\underline{k},\underline{m}}_i=0|\boldsymbol{X}^{\underline{k},\underline{m}}_{<i}=\boldsymbol{a}\Big]\left(1-k_i\mathbb{P}_{\boldsymbol{\cal{F}}_{\underline{k},\underline{m}}}\Big[\boldsymbol{Y}^{\underline{k},\underline{m}}_i=0|\boldsymbol{Y}^{\underline{k},\underline{m}}_{<i}=\boldsymbol{b}\Big]\right)\\
&&+(j-1)\mathbb{P}_{\boldsymbol{\cal{F}}_{\underline{k},\underline{m}}}\Big[\boldsymbol{X}^{\underline{k},\underline{m}}_i=0|\boldsymbol{X}^{\underline{k},\underline{m}}_{<i}=\boldsymbol{a}\Big]\mathbb{P}_{\boldsymbol{\cal{F}}_{\underline{k},\underline{m}}}\Big[\boldsymbol{Y}^{\underline{k},\underline{m}}_i=0|\boldsymbol{Y}^{\underline{k},\underline{m}}_{<i}=\boldsymbol{b}\Big]\\
&=&\mathbb{P}_{\boldsymbol{\cal{F}}_{\underline{k},\underline{m}}}\Big[\boldsymbol{X}^{\underline{k},\underline{m}}_i=0|\boldsymbol{X}^{\underline{k},\underline{m}}_{<i}=\boldsymbol{a}\Big]+\mathbb{P}_{\boldsymbol{\cal{F}}_{\underline{k},\underline{m}}}\Big[\boldsymbol{Y}^{\underline{k},\underline{m}}_i=0|\boldsymbol{Y}^{\underline{k},\underline{m}}_{<i}=\boldsymbol{b}\Big]-(2k_i-j+1)\mathbb{P}_{\boldsymbol{\cal{F}}_{\underline{k},\underline{m}}}\Big[\boldsymbol{X}^{\underline{k},\underline{m}}_i=0|\boldsymbol{X}^{\underline{k},\underline{m}}_{<i}=\boldsymbol{a}\Big]\mathbb{P}_{\boldsymbol{\cal{F}}_{\underline{k},\underline{m}}}\Big[\boldsymbol{Y}^{\underline{k},\underline{m}}_i=0|\boldsymbol{Y}^{\underline{k},\underline{m}}_{<i}=\boldsymbol{b}\Big].
\end{eqnarray*}
\end{small}
Furthermore,
\begin{small}
\begin{eqnarray*}
&&\mathbb{P}_{\boldsymbol{\cal{F}}_{\underline{k},\underline{m}}}\Big[\boldsymbol{X}^{\underline{k},\underline{m}}_i\boldsymbol{Y}^{\underline{k},\underline{m}}_i=0|\boldsymbol{X}^{\underline{k},\underline{m}}_{<i}=\boldsymbol{a},\boldsymbol{Y}^{\underline{k},\underline{m}}_{<i}=\boldsymbol{b}\Big]\\
&=& \mathbb{P}_{\boldsymbol{\cal{F}}_{\underline{k},\underline{m}}}\Big[\boldsymbol{X}^{\underline{k},\underline{m}}_i=0|\boldsymbol{X}^{\underline{k},\underline{m}}_{<i}=\boldsymbol{a}\Big]+\mathbb{P}_{\boldsymbol{\cal{F}}_{\underline{k},\underline{m}}}\Big[\boldsymbol{Y}^{\underline{k},\underline{m}}_i=0|\boldsymbol{Y}^{\underline{k},\underline{m}}_{<i}=\boldsymbol{b}\Big]-\mathbb{P}_{\boldsymbol{\cal{F}}_{\underline{k},\underline{m}}}\Big[\boldsymbol{X}^{\underline{k},\underline{m}}_i=0|\boldsymbol{X}^{\underline{k},\underline{m}}_{<i}=\boldsymbol{a}\Big]\mathbb{P}_{\boldsymbol{\cal{F}}_{\underline{k},\underline{m}}}\Big[\boldsymbol{Y}^{\underline{k},\underline{m}}_i=0|\boldsymbol{Y}^{\underline{k},\underline{m}}_{<i}=\boldsymbol{b}\Big]
\\&&+\sum_{\substack{0< u,v< k_i\\ u+v\geq k_i}}\mathbb{P}_{\boldsymbol{\cal{F}}_{\underline{k},\underline{m}}}\Big[\boldsymbol{X}^{\underline{k},\underline{m}}_i=\varepsilon_i^u|\boldsymbol{X}^{\underline{k},\underline{m}}_{<i}=\boldsymbol{a}\Big]\mathbb{P}_{\boldsymbol{\cal{F}}_{\underline{k},\underline{m}}}\Big[\boldsymbol{Y}^{\underline{k},\underline{m}}_i=\varepsilon_i^v|\boldsymbol{Y}^{\underline{k},\underline{m}}_{<i}=\boldsymbol{b}\Big]\\
&=&\mathbb{P}_{\boldsymbol{\cal{F}}_{\underline{k},\underline{m}}}\Big[\boldsymbol{X}^{\underline{k},\underline{m}}_i=0|\boldsymbol{X}^{\underline{k},\underline{m}}_{<i}=\boldsymbol{a}\Big]+\mathbb{P}_{\boldsymbol{\cal{F}}_{\underline{k},\underline{m}}}\Big[\boldsymbol{Y}^{\underline{k},\underline{m}}_i=0|\boldsymbol{Y}^{\underline{k},\underline{m}}_{<i}=\boldsymbol{b}\Big]-\mathbb{P}_{\boldsymbol{\cal{F}}_{\underline{k},\underline{m}}}\Big[\boldsymbol{X}^{\underline{k},\underline{m}}_i=0|\boldsymbol{X}^{\underline{k},\underline{m}}_{<i}=\boldsymbol{a}\Big]\mathbb{P}_{\boldsymbol{\cal{F}}_{\underline{k},\underline{m}}}\Big[\boldsymbol{Y}^{\underline{k},\underline{m}}_i=0|\boldsymbol{Y}^{\underline{k},\underline{m}}_{<i}=\boldsymbol{b}\Big]\\
&&+\sum_{j=1}^{k_i-1}\mathbb{P}_{\boldsymbol{\cal{F}}_{\underline{k},\underline{m}}}\Big[\boldsymbol{X}^{\underline{k},\underline{m}}_i=\varepsilon_i^j|\boldsymbol{X}^{\underline{k},\underline{m}}_{<i}=\boldsymbol{a}\Big]\sum_{\ell=k_i-j}^{k_i-1}\mathbb{P}_{\boldsymbol{\cal{F}}_{\underline{k},\underline{m}}}\Big[\boldsymbol{Y}^{\underline{k},\underline{m}}_i=\varepsilon_i^{\ell}|\boldsymbol{Y}^{\underline{k},\underline{m}}_{<i}=\boldsymbol{b}\Big]\\
&=&\mathbb{P}_{\boldsymbol{\cal{F}}_{\underline{k},\underline{m}}}\Big[\boldsymbol{X}^{\underline{k},\underline{m}}_i=0|\boldsymbol{X}^{\underline{k},\underline{m}}_{<i}=\boldsymbol{a}\Big]+\mathbb{P}_{\boldsymbol{\cal{F}}_{\underline{k},\underline{m}}}\Big[\boldsymbol{Y}^{\underline{k},\underline{m}}_i=0|\boldsymbol{Y}^{\underline{k},\underline{m}}_{<i}=\boldsymbol{b}\Big]-\mathbb{P}_{\boldsymbol{\cal{F}}_{\underline{k},\underline{m}}}\Big[\boldsymbol{X}^{\underline{k},\underline{m}}_i=0|\boldsymbol{X}^{\underline{k},\underline{m}}_{<i}=\boldsymbol{a}\Big]\mathbb{P}_{\boldsymbol{\cal{F}}_{\underline{k},\underline{m}}}\Big[\boldsymbol{Y}^{\underline{k},\underline{m}}_i=0|\boldsymbol{Y}^{\underline{k},\underline{m}}_{<i}=\boldsymbol{b}\Big]\\
&&+\frac{k_i(k_i-1)}{2}\mathbb{P}_{\boldsymbol{\cal{F}}_{\underline{k},\underline{m}}}\Big[\boldsymbol{X}^{\underline{k},\underline{m}}_i=0|\boldsymbol{X}^{\underline{k},\underline{m}}_{<i}=\boldsymbol{a}\Big]\mathbb{P}_{\boldsymbol{\cal{F}}_{\underline{k},\underline{m}}}\Big[\boldsymbol{Y}^{\underline{k},\underline{m}}_i=0|\boldsymbol{Y}^{\underline{k},\underline{m}}_{<i}=\boldsymbol{b}\Big].
\end{eqnarray*}
\end{small}
We also have for every $0\leq \ell\leq m_i-1$,
\begin{small}
\begin{eqnarray*}
&&\mathbb{P}_{\boldsymbol{\cal{F}}_{\underline{k},\underline{m}}}\Big[\boldsymbol{X}^{\underline{k},\underline{m}}_i\boldsymbol{Y}^{\underline{k},\underline{m}}_i=\zeta_i^{\ell}|\boldsymbol{X}^{\underline{k},\underline{m}}_{<i}=\boldsymbol{a},\boldsymbol{Y}^{\underline{k},\underline{m}}_{<i}=\boldsymbol{b}\Big]\\
&=&\sum_{\substack{0\leq u,v\leq m_i-1\\ u+v\equiv\ell\bmod m_i}}\mathbb{P}_{\boldsymbol{\cal{F}}_{\underline{k},\underline{m}}}\Big[\boldsymbol{X}^{\underline{k},\underline{m}}_i=\zeta_i^u|\boldsymbol{X}^{\underline{k},\underline{m}}_{<i}=\boldsymbol{a}\Big]\mathbb{P}_{\boldsymbol{\cal{F}}_{\underline{k},\underline{m}}}\Big[\boldsymbol{Y}^{\underline{k},\underline{m}}_i=\zeta_i^v|\boldsymbol{Y}^{\underline{k},\underline{m}}_{<i}=\boldsymbol{b}\Big]\\
&=&\sum_{\substack{0\leq u,v\leq m_i-1\\ u+v=\ell\textup{ or }u+v=m_i+\ell}}\mathbb{P}_{\boldsymbol{\cal{F}}_{\underline{k},\underline{m}}}\Big[\boldsymbol{X}^{\underline{k},\underline{m}}_i=\zeta_i^u|\boldsymbol{X}^{\underline{k},\underline{m}}_{<i}=\boldsymbol{a}\Big]\mathbb{P}_{\boldsymbol{\cal{F}}_{\underline{k},\underline{m}}}\Big[\boldsymbol{Y}^{\underline{k},\underline{m}}_i=\zeta_i^v|\boldsymbol{Y}^{\underline{k},\underline{m}}_{<i}=\boldsymbol{b}\Big]\\
&=&\sum_{u=0}^{\ell}\mathbb{P}_{\boldsymbol{\cal{F}}_{\underline{k},\underline{m}}}\Big[\boldsymbol{X}^{\underline{k},\underline{m}}_i=\zeta_i^u|\boldsymbol{X}^{\underline{k},\underline{m}}_{<i}=\boldsymbol{a}\Big]\mathbb{P}_{\boldsymbol{\cal{F}}_{\underline{k},\underline{m}}}\Big[\boldsymbol{Y}^{\underline{k},\underline{m}}_i=\zeta_i^{\ell-u}|\boldsymbol{Y}^{\underline{k},\underline{m}}_{<i}=\boldsymbol{b}\Big]+\sum_{u=\ell+1}^{m_i-1}\mathbb{P}_{\boldsymbol{\cal{F}}_{\underline{k},\underline{m}}}\Big[\boldsymbol{X}^{\underline{k},\underline{m}}_i=\zeta_i^u|\boldsymbol{X}^{\underline{k},\underline{m}}_{<i}=\boldsymbol{a}\Big]\mathbb{P}_{\boldsymbol{\cal{F}}_{\underline{k},\underline{m}}}\Big[\boldsymbol{Y}^{\underline{k},\underline{m}}_i=\zeta_i^{m_i+\ell-u}|\boldsymbol{Y}^{\underline{k},\underline{m}}_{<i}=\boldsymbol{b}\Big]\\
&=&\sum_{u=0}^{\ell}\mathbb{P}_{\boldsymbol{\cal{F}}_{\underline{k},\underline{m}}}\Big[\boldsymbol{X}^{\underline{k},\underline{m}}_i=1|\boldsymbol{X}^{\underline{k},\underline{m}}_{<i}=\boldsymbol{a}\Big]\mathbb{P}_{\boldsymbol{\cal{F}}_{\underline{k},\underline{m}}}\Big[\boldsymbol{Y}^{\underline{k},\underline{m}}_i=1|\boldsymbol{Y}^{\underline{k},\underline{m}}_{<i}=\boldsymbol{b}\Big]+\sum_{u=\ell+1}^{m_i-1}\mathbb{P}_{\boldsymbol{\cal{F}}_{\underline{k},\underline{m}}}\Big[\boldsymbol{X}^{\underline{k},\underline{m}}_i=1|\boldsymbol{X}^{\underline{k},\underline{m}}_{<i}=\boldsymbol{a}\Big]\mathbb{P}_{\boldsymbol{\cal{F}}_{\underline{k},\underline{m}}}\Big[\boldsymbol{Y}^{\underline{k},\underline{m}}_i=1|\boldsymbol{Y}^{\underline{k},\underline{m}}_{<i}=\boldsymbol{b}\Big]\\
&=&\frac{1}{m_i}\left(m_i\mathbb{P}_{\boldsymbol{\cal{F}}_{\underline{k},\underline{m}}}\Big[\boldsymbol{X}^{\underline{k},\underline{m}}_i=1|\boldsymbol{X}^{\underline{k},\underline{m}}_{<i}=\boldsymbol{a}\Big]\right)\left(m_i\mathbb{P}_{\boldsymbol{\cal{F}}_{\underline{k},\underline{m}}}\Big[\boldsymbol{Y}^{\underline{k},\underline{m}}_i=1|\boldsymbol{Y}^{\underline{k},\underline{m}}_{<i}=\boldsymbol{b}\Big]\right)\\
&=&\frac{1}{m_i}\left(1-k_i\mathbb{P}_{\boldsymbol{\cal{F}}_{\underline{k},\underline{m}}}\Big[\boldsymbol{X}^{\underline{k},\underline{m}}_i=0|\boldsymbol{X}^{\underline{k},\underline{m}}_{<i}=\boldsymbol{a}\Big]\right)\left(1-k_i\mathbb{P}_{\boldsymbol{\cal{F}}_{\underline{k},\underline{m}}}\Big[\boldsymbol{Y}^{\underline{k},\underline{m}}_i=0|\boldsymbol{Y}^{\underline{k},\underline{m}}_{<i}=\boldsymbol{b}\Big]\right).
\end{eqnarray*}
\end{small}
We therefore have
\begin{small}
\begin{eqnarray*}
&&H\left(\boldsymbol{X}^{\underline{k},\underline{m}}_i\boldsymbol{Y}^{\underline{k},\underline{m}}_i|\boldsymbol{X}^{\underline{k},\underline{m}}_{<i}=\boldsymbol{a},\boldsymbol{Y}^{\underline{k},\underline{m}}_{<i}=\boldsymbol{b}\right)\\
&=&-\left(1-k_i\mathbb{P}_{\boldsymbol{\cal{F}}_{\underline{k},\underline{m}}}\Big[\boldsymbol{X}^{\underline{k},\underline{m}}_i=0|\boldsymbol{X}^{\underline{k},\underline{m}}_{<i}=\boldsymbol{a}\Big]\right)\left(1-k_i\mathbb{P}_{\boldsymbol{\cal{F}}_{\underline{k},\underline{m}}}\Big[\boldsymbol{Y}^{\underline{k},\underline{m}}_i=1|\boldsymbol{Y}^{\underline{k},\underline{m}}_{<i}=\boldsymbol{b}\Big]\right)\times\\
&\times&\log\left(\frac{1}{m_i}\left(1-k_i\mathbb{P}_{\boldsymbol{\cal{F}}_{\underline{k},\underline{m}}}\Big[\boldsymbol{X}^{\underline{k},\underline{m}}_i=0|\boldsymbol{X}^{\underline{k},\underline{m}}_{<i}=\boldsymbol{a}\Big]\right)\left(1-k_i\mathbb{P}_{\boldsymbol{\cal{F}}_{\underline{k},\underline{m}}}\Big[\boldsymbol{Y}^{\underline{k},\underline{m}}_i=0|\boldsymbol{Y}^{\underline{k},\underline{m}}_{<i}=\boldsymbol{b}\Big]\right)\right)\\
&&-\Bigg(\mathbb{P}_{\boldsymbol{\cal{F}}_{\underline{k},\underline{m}}}\Big[\boldsymbol{X}^{\underline{k},\underline{m}}_i=0|\boldsymbol{X}^{\underline{k},\underline{m}}_{<i}=\boldsymbol{a}\Big]+\mathbb{P}_{\boldsymbol{\cal{F}}_{\underline{k},\underline{m}}}\Big[\boldsymbol{Y}^{\underline{k},\underline{m}}_i=0|\boldsymbol{Y}^{\underline{k},\underline{m}}_{<i}=\boldsymbol{b}\Big]-\mathbb{P}_{\boldsymbol{\cal{F}}_{\underline{k},\underline{m}}}\Big[\boldsymbol{X}^{\underline{k},\underline{m}}_i=0|\boldsymbol{X}^{\underline{k},\underline{m}}_{<i}=\boldsymbol{a}\Big]\mathbb{P}_{\boldsymbol{\cal{F}}_{\underline{k},\underline{m}}}\Big[\boldsymbol{Y}^{\underline{k},\underline{m}}_i=0|\boldsymbol{Y}^{\underline{k},\underline{m}}_{<i}=\boldsymbol{b}\Big]\\&&+\frac{k_i(k_i-1)}{2}\mathbb{P}_{\boldsymbol{\cal{F}}_{\underline{k},\underline{m}}}\Big[\boldsymbol{X}^{\underline{k},\underline{m}}_i=0|\boldsymbol{X}^{\underline{k},\underline{m}}_{<i}=\boldsymbol{a}\Big]\mathbb{P}_{\boldsymbol{\cal{F}}_{\underline{k},\underline{m}}}\Big[\boldsymbol{Y}^{\underline{k},\underline{m}}_i=0|\boldsymbol{Y}^{\underline{k},\underline{m}}_{<i}=\boldsymbol{b}\Big]\Bigg)\times\\
&&\times\log\Bigg(\mathbb{P}_{\boldsymbol{\cal{F}}_{\underline{k},\underline{m}}}\Big[\boldsymbol{X}^{\underline{k},\underline{m}}_i=0|\boldsymbol{X}^{\underline{k},\underline{m}}_{<i}=\boldsymbol{a}\Big]+\mathbb{P}_{\boldsymbol{\cal{F}}_{\underline{k},\underline{m}}}\Big[\boldsymbol{Y}^{\underline{k},\underline{m}}_i=0|\boldsymbol{Y}^{\underline{k},\underline{m}}_{<i}=\boldsymbol{b}\Big]-\mathbb{P}_{\boldsymbol{\cal{F}}_{\underline{k},\underline{m}}}\Big[\boldsymbol{X}^{\underline{k},\underline{m}}_i=0|\boldsymbol{X}^{\underline{k},\underline{m}}_{<i}=\boldsymbol{a}\Big]\mathbb{P}_{\boldsymbol{\cal{F}}_{\underline{k},\underline{m}}}\Big[\boldsymbol{Y}^{\underline{k},\underline{m}}_i=0|\boldsymbol{Y}^{\underline{k},\underline{m}}_{<i}=\boldsymbol{b}\Big]\\&&+\frac{k_i(k_i-1)}{2}\mathbb{P}_{\boldsymbol{\cal{F}}_{\underline{k},\underline{m}}}\Big[\boldsymbol{X}^{\underline{k},\underline{m}}_i=0|\boldsymbol{X}^{\underline{k},\underline{m}}_{<i}=\boldsymbol{a}\Big]\mathbb{P}_{\boldsymbol{\cal{F}}_{\underline{k},\underline{m}}}\Big[\boldsymbol{Y}^{\underline{k},\underline{m}}_i=0|\boldsymbol{Y}^{\underline{k},\underline{m}}_{<i}=\boldsymbol{b}\Big]\Bigg)\\
&&-\sum_{j=1}^{k_i-1}\Bigg(\mathbb{P}_{\boldsymbol{\cal{F}}_{\underline{k},\underline{m}}}\Big[\boldsymbol{X}^{\underline{k},\underline{m}}_i=0|\boldsymbol{X}^{\underline{k},\underline{m}}_{<i}=\boldsymbol{a}\Big]+\mathbb{P}_{\boldsymbol{\cal{F}}_{\underline{k},\underline{m}}}\Big[\boldsymbol{Y}^{\underline{k},\underline{m}}_i=0|\boldsymbol{Y}^{\underline{k},\underline{m}}_{<i}=\boldsymbol{b}\Big]-(2k_i-j+1)\mathbb{P}_{\boldsymbol{\cal{F}}_{\underline{k},\underline{m}}}\Big[\boldsymbol{X}^{\underline{k},\underline{m}}_i=0|\boldsymbol{X}^{\underline{k},\underline{m}}_{<i}=\boldsymbol{a}\Big]\mathbb{P}_{\boldsymbol{\cal{F}}_{\underline{k},\underline{m}}}\Big[\boldsymbol{Y}^{\underline{k},\underline{m}}_i=0|\boldsymbol{Y}^{\underline{k},\underline{m}}_{<i}=\boldsymbol{b}\Big]\Bigg)\times\\&&
\times\log\Bigg(\mathbb{P}_{\boldsymbol{\cal{F}}_{\underline{k},\underline{m}}}\Big[\boldsymbol{X}^{\underline{k},\underline{m}}_i=0|\boldsymbol{X}^{\underline{k},\underline{m}}_{<i}=\boldsymbol{a}\Big]+\mathbb{P}_{\boldsymbol{\cal{F}}_{\underline{k},\underline{m}}}\Big[\boldsymbol{Y}^{\underline{k},\underline{m}}_i=0|\boldsymbol{Y}^{\underline{k},\underline{m}}_{<i}=\boldsymbol{b}\Big]-(2k_i-j+1)\mathbb{P}_{\boldsymbol{\cal{F}}_{\underline{k},\underline{m}}}\Big[\boldsymbol{X}^{\underline{k},\underline{m}}_i=0|\boldsymbol{X}^{\underline{k},\underline{m}}_{<i}=\boldsymbol{a}\Big]\mathbb{P}_{\boldsymbol{\cal{F}}_{\underline{k},\underline{m}}}\Big[\boldsymbol{Y}^{\underline{k},\underline{m}}_i=0|\boldsymbol{Y}^{\underline{k},\underline{m}}_{<i}=\boldsymbol{b}\Big]\Bigg)
\end{eqnarray*}
\end{small}
We obtain from the above computations that
\begin{small}
\begin{eqnarray*}
&&H\left(\boldsymbol{X}^{\underline{k},\underline{m}}_i\boldsymbol{Y}^{\underline{k},\underline{m}}_i|\boldsymbol{X}^{\underline{k},\underline{m}}_{<i},\boldsymbol{Y}^{\underline{k},\underline{m}}_{<i}\right)\\&=&\sum_{\boldsymbol{a},\boldsymbol{b}\in\prod_{\ell=1}^{i-1}\{0,1,\varepsilon_{\ell},\hdots,\varepsilon_{\ell}^{k_{\ell}-1},\zeta_{\ell},\hdots,\zeta_{\ell}^{m_{\ell}-1}\}}\mathbb{P}_{\boldsymbol{\cal{F}}_{\underline{k},\underline{m}}}\Big[\boldsymbol{X}^{\underline{k},\underline{m}}_{<i}=\boldsymbol{a},\boldsymbol{Y}^{\underline{k},\underline{m}}_{<i}=\boldsymbol{b}\Big]g_{k_i,m_i}\left(\mathbb{P}_{\boldsymbol{\cal{F}}_{\underline{k},\underline{m}}}\Big[\boldsymbol{X}^{\underline{k},\underline{m}}_i=0|\boldsymbol{X}^{\underline{k},\underline{m}}_{<i}=\boldsymbol{a}\Big],\mathbb{P}_{\boldsymbol{\cal{F}}_{\underline{k},\underline{m}}}\Big[\boldsymbol{Y}^{\underline{k},\underline{m}}_i=0|\boldsymbol{Y}^{\underline{k},\underline{m}}_{<i}=\boldsymbol{b}\Big]\right).
\end{eqnarray*}
\end{small}
Combining the two above lemmas, it is clear the we obtain Proposition~\ref{differenceprop}.
\end{proof}
\section{Minimizers of $F_{k,m}$}\label{optimization}
By Proposition~\ref{differenceprop} in the previous section,
\[H(\boldsymbol{X}^{\underline{k},\underline{m}}_i\boldsymbol{Y}^{\underline{k},\underline{m}}_i|\boldsymbol{X}^{\underline{k},\underline{m}}_{<i},\boldsymbol{Y}^{\underline{k},\underline{m}}_{<i})-H(\boldsymbol{X}^{\underline{k},\underline{m}}_i|\boldsymbol{X}^{\underline{k},\underline{m}}_{<i})=F_{k_i,m_i}(\mu_{\underline{k},\underline{m}}(i)),\]
where for integers $k,m\geq 1$ and probability distributions $\mu$ on $[0,1]$,
\[F_{k,m}(\mu):=\mathop{\mathbb{E}}_{(x,y)\sim\mu\times\mu}\Big[g_{k,m}\left(\frac{x}{k},\frac{y}{k}\right)\Big]-\mathop{\mathbb{E}}_{x\sim\mu}\Big[h_{k,m}\left(\frac{x}{k}\right)\Big],\]
and
\[\mu_{\underline{k},\underline{m}}(i):=\sum_{\boldsymbol{a}\in\prod_{\ell=1}^{i-1}\{0,1,\varepsilon_{\ell},\hdots,\varepsilon_{\ell}^{k_{\ell}-1},\zeta_{\ell},\hdots,\zeta_{\ell}^{m_{\ell}-1}\}}\mathbb{P}_{\boldsymbol{\cal{F}}_{\underline{k},\underline{m}}}\Big[\boldsymbol{X}^{\underline{k},\underline{m}}_{<i}=\boldsymbol{a}\Big]\delta_{k_i\mathbb{P}_{\boldsymbol{\cal{F}}_{\underline{k},\underline{m}}}\Big[\boldsymbol{X}^{\underline{k},\underline{m}}_i=0|\boldsymbol{X}^{\underline{k},\underline{m}}_{<i}=\boldsymbol{a}\Big]}.\]

Note that $\mathop{\mathbb{E}}_{x\sim\mu_{\underline{k},\underline{m}}(i)}=k_i\mathbb{P}_{\boldsymbol{\cal{F}}_{\underline{k},\underline{m}}}\Big[\boldsymbol{X}^{\underline{k},\underline{m}}_i=0\Big]$. Though we want to bound $F_{k_i,m_i}(\mu_{\underline{k},\underline{m}}(i))$ from below in Corollary~\ref{mainprop}, we do so by bounding from below $F_{k_i,m_i}(\mu)$ for every probability measure $\mu$ on $[0,1]$ satisfying $\mathop{\mathbb{E}}_{x\sim\mu}[x]=\phi$, where $0<\phi<\frac{1}{2}$. As explained at the end of the introduction, Corollary~\ref{mainprop} follows from Theorem~\ref{functionalthm} that we prove in this section.
\[F_{k,m}:\cal{M}_{\phi}\rightarrow\mathbb{R}\]
is a continuous functional on the space $\cal{M}_{\phi}$ of probability measures on $\left[0,1\right]$ with fixed $\mathop{\mathbb{E}}_{x\sim\mu}[x]=\phi$. 

$\cal{M}_{\phi}$ is a compact convex space. Since $F_{k,m}$ is, furthermore, continuous, $F_{k,m}$ has minimizers in $\cal{M}_{\phi}$. We show that $F_{k,m}$ is a concave functional on $\cal{M}_{\phi}$, and classify the form of its minimizers, using which we complete the proof of Theorem~\ref{functionalthm} in the rest of this paper.
\begin{proposition}\label{concavity}For every $k\geq 3$, $F_{k,m}:\cal{M}_{\phi}\rightarrow\mathbb{R}$ is concave.
\end{proposition}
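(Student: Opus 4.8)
The plan is to use that $F_{k,m}$ is a \emph{quadratic} functional of $\mu$. Writing $G(x,y):=g_{k,m}(x/k,y/k)$, which is symmetric, continuous and bounded on $[0,1]^2$, we have $F_{k,m}(\mu)=\iint G\,d\mu\,d\mu-\mathop{\mathbb{E}}_{x\sim\mu}[h_{k,m}(x/k)]$, and the affine second summand is irrelevant to concavity. Expanding $F_{k,m}((1-t)\mu_0+t\mu_1)$ for $\mu_0,\mu_1\in\cal{M}_{\phi}$ gives
\[(1-t)F_{k,m}(\mu_0)+tF_{k,m}(\mu_1)-F_{k,m}((1-t)\mu_0+t\mu_1)=t(1-t)\iint_{[0,1]^2}G(x,y)\,d\nu(x)\,d\nu(y),\qquad \nu:=\mu_0-\mu_1,\]
where $\nu$ is a signed measure on $[0,1]$ with $\nu([0,1])=0$ and $\int x\,d\nu(x)=0$. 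So it suffices to prove $\iint G\,d\nu\,d\nu\le0$ for every such $\nu$; after substituting $s=x/k$, $t=y/k$ this reads: for every finite signed measure $\tilde\nu$ on $[0,1/k]$ with $\int d\tilde\nu=\int s\,d\tilde\nu=0$ one has $\iint g_{k,m}(s,t)\,d\tilde\nu(s)\,d\tilde\nu(t)\le0$. Call a symmetric kernel $K(s,t)$ \emph{negligible} if $K(s,t)=f(s)+g(t)+tA(s)+sB(t)$ for bounded $f,g,A,B$; the two moment conditions make every negligible kernel integrate to $0$ against $\tilde\nu\otimes\tilde\nu$. Hence it is enough to show that, modulo negligible kernels, $g_{k,m}$ equals a convergent sum $-\sum_n c_n\psi_n(s)\psi_n(t)$ with all $c_n\ge0$ (a ``negative semidefinite'' kernel), since each $\psi_n(s)\psi_n(t)$ integrates to $\bigl(\int\psi_n\,d\tilde\nu\bigr)^2\ge0$.

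The heart of the argument is an expansion of each summand of $g_{k,m}$. Recall $g_{k,m}(s,t)$ is a sum of terms $-u\log u$, where $u$ runs over the transition probabilities $(1-ks)(1-kt)$, $v_\alpha:=s+t+\alpha st$ with $\alpha:=\tfrac{k(k-1)}{2}-1$, and $v_{-\beta_j}:=s+t-\beta_j st$ with $\beta_j:=2k-j+1$ for $j=1,\dots,k-1$, plus the affine-bilinear term $\log m\cdot(1-ks)(1-kt)$, which is negligible. The key point is that each $u$ has one of two normal forms, and in each case $-u\log u$ is negligible plus a nonpositive combination of rank-one kernels:
\begin{enumerate}[(i)]
\item If $u=\lambda\,\ell_1(s)\ell_2(t)$ with $\ell_1,\ell_2$ affine and $\lambda>0$, then $-u\log u=-\lambda\ell_1(s)\ell_2(t)(\log\lambda+\log\ell_1(s)+\log\ell_2(t))$ is negligible outright (each term is affine-bilinear or of the form $(\text{affine})\cdot(\text{function})$). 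This applies to $u=(1-ks)(1-kt)$.
\item If $u=\lambda\bigl(a(s)a(t)-c^2\bigr)$ with $0<c\le a(\cdot)$ on the domain, then, writing $z:=c^2/(a(s)a(t))\in(0,1]$, $\log u=\log(\lambda a(s)a(t))+\log(1-z)$; the first part contributes a negligible kernel, and multiplying $-\log(1-z)=\sum_{n\ge1}z^n/n$ by $u=\lambda a(s)a(t)(1-z)$ and telescoping yields
\[-u\log u\equiv-\sum_{n\ge1}\frac{\lambda c^{2n+2}}{n(n+1)}\,a(s)^{-n}a(t)^{-n}\pmod{\text{negligible}}.\]
Since $k\ge3$ forces $\alpha>0$, we may write $v_\alpha=\alpha\bigl(a(s)a(t)-\alpha^{-2}\bigr)$ with $a(s)=s+\alpha^{-1}\ge\alpha^{-1}>0$, so this applies to $u=v_\alpha$.
\item If $u=\mu\bigl(1-P(s)P(t)\bigr)$ with $\mu>0$, $P$ affine and $|P(\cdot)|\le1$ on the domain, then, with $z:=P(s)P(t)\in[-1,1]$, the same computation gives
\[-u\log u\equiv-\mu\sum_{n\ge2}\frac{P(s)^n P(t)^n}{n(n-1)}\pmod{\text{negligible}}.\]
Since $\beta_j\le2k$, we have $v_{-\beta_j}=\beta_j^{-1}\bigl(1-P_j(s)P_j(t)\bigr)$ with $P_j(s)=\beta_j s-1\in[-1,1]$ for $s\in[0,1/k]$, so this applies to $u=v_{-\beta_j}$.
\end{enumerate}
In cases (ii) and (iii) every coefficient is $\le0$, and each $\psi_n(s)\psi_n(t)$ (namely $a(s)^{-n}a(t)^{-n}$ or $P_j(s)^nP_j(t)^n$) is positive semidefinite.

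Summing these expansions over the summands of $g_{k,m}$ exhibits $g_{k,m}$, modulo a negligible kernel, as a convergent sum $-\sum_n c_n\psi_n(s)\psi_n(t)$ with $c_n\ge0$; integrating against $\tilde\nu\otimes\tilde\nu$ gives $\iint g_{k,m}\,d\tilde\nu\,d\tilde\nu=-\sum_n c_n\bigl(\int\psi_n\,d\tilde\nu\bigr)^2\le0$, which is concavity. To make this rigorous one checks the analytic bookkeeping: the logarithmic expansions converge on the open square $\{|z|<1\}$, and the \emph{rearranged} series for $-u\log u$ converge \emph{absolutely and uniformly} on all of $[0,1/k]^2$ --- indeed $c_n|\psi_n(s)\psi_n(t)|$ is bounded by $\tfrac{1}{\alpha\,n(n+1)}$ (using $a(s)\ge\alpha^{-1}$) resp.\ $\tfrac{1}{\beta_j\,n(n-1)}$ (using $|P_j|\le1$), independently of $s,t$. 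This uniform domination both extends the identity to the whole closed square (both sides being continuous there) and licenses interchanging $\sum_n$ with $\iint\cdot\,d\tilde\nu\,d\tilde\nu$ by dominated convergence. I expect the bulk of the work --- and the likeliest source of error --- to be carrying out the telescoping log-expansions of (ii) and (iii) cleanly and verifying that the leftover pieces genuinely have the form $f(s)+g(t)+tA(s)+sB(t)$. The hypothesis $k\ge3$ enters exactly to guarantee $\alpha>0$ (for $k=2$ one has $\alpha=0$ and $v_\alpha$ degenerates to $s+t$, matching the remark in the introduction that this case needs extra work), while $m$ occurs only through the negligible term $\log m\cdot(1-ks)(1-kt)$, explaining why no hypothesis on $m$ is required.
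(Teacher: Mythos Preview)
Your proposal is correct, and at its core it is the same argument as the paper's: both proofs factor the problematic arguments of the logarithms as
\[
s+t+\alpha st=\alpha\bigl((s+\alpha^{-1})(t+\alpha^{-1})-\alpha^{-2}\bigr),\qquad
s+t-\beta_j st=\beta_j^{-1}\bigl(1-(1-\beta_j s)(1-\beta_j t)\bigr),
\]
and then expand $-u\log u$ via the Taylor series of $\log(1-z)$, telescoping to exhibit everything (modulo harmless terms) as a nonnegative combination of rank-one negative semidefinite kernels. The hypothesis $k\ge3$ is used in exactly the same place in both arguments, namely to ensure $\alpha=\tfrac{k(k-1)}{2}-1>0$ so that the first factorization is available.

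The packaging differs. The paper integrates by parts twice to rewrite $\iint g_k\,d\mu\,d\mu$ in terms of the CDF $\gamma$, so that the ``harmless'' terms become genuinely linear in $\gamma$ (using $\int_0^1\gamma=1-\phi$), and the remainder is literally $-\sum_i\bigl(\int\gamma\,a_i\bigr)^2$. You instead look at the second difference along a segment in $\cal{M}_\phi$, reduce to $\iint g_{k,m}\,d\tilde\nu\,d\tilde\nu\le0$ for signed $\tilde\nu$ with $\int d\tilde\nu=\int s\,d\tilde\nu=0$, and absorb the harmless terms into your ``negligible'' class $f(s)+g(t)+tA(s)+sB(t)$, which the two moment constraints annihilate. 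Your route is a bit more direct and makes transparent exactly which constraints are doing the work; the paper's CDF route trades this for working entirely with Lebesgue integrals against bounded $\gamma$, which sidesteps your (correctly handled) endpoint issues where $z\to1$. Either way one arrives at the same series and the same conclusion.
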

\begin{proof}By definition,
\begin{small}
\begin{equation}\label{intF}
F_{k,m}(\mu)=\iint_{[0,1]^2}g_{k,m}\left(\frac{x}{k},\frac{y}{k}\right)\mu(dx)\mu(dy)-\int_0^1h_{k,m}\left(\frac{x}{k}\right)\mu(dx).
\end{equation}
\end{small}
Since
\begin{small}
\[-\int_0^1h_{k,m}\left(\frac{x}{k}\right)\mu(dx)\]
\end{small}
is linear in $\mu$, and so trivially concave, it suffices to show that
\begin{small}
\[\iint_{[0,1]^2}g_{k,m}\left(\frac{x}{k},\frac{y}{k}\right)\mu(dx)\mu(dy)\]
\end{small}
is concave in $\mu$. In order to show this, we use $\gamma(x):=\mu([0,x])$, the cumulative density function of $\mu$. Note that by integration by parts,
\begin{small}
\begin{equation}\label{gammaint}
\phi=\int_0^1x\mu(dx)=x\gamma(x)\Big|_0^1-\int_0^1\gamma(x)dx\implies \int_0^1\gamma(x)dx=1-\phi.
\end{equation}
\end{small}
Note that
\begin{small}
\[g_{k,m}\left(\frac{x}{k},\frac{y}{k}\right)=g_{k,1}\left(\frac{x}{k},\frac{y}{k}\right)+(1-x)(1-y)\log m.\]
\end{small}
Let $g_k:=g_{k,1}$. Since $\int_0^1x\mu(dx)=\phi$,
\begin{small}
\[\iint_{[0,1]^2}(1-x)(1-y)(\log m)\mu(dx)\mu(dy)=(1-\phi)^2\log m\]
\end{small}
is a constant, and so it suffices to show that
\begin{small}
\[\iint_{[0,1]^2}g_k\left(\frac{x}{k},\frac{y}{k}\right)\mu(dx)\mu(dy)\]
\end{small}
is concave in $\mu$. By integration by parts, we have
\begin{small}
\begin{eqnarray*}
&&\int_0^1g_{k}\left(\frac{x}{k},\frac{y}{k}\right)\mu(dx)\\
&=&g_{k}\left(\frac{x}{k},\frac{y}{k}\right)\gamma(x)\Big|_{x=0}^{x=1}-\int_0^1\frac{\partial}{\partial x}\left(g_{k}\left(\frac{x}{k},\frac{y}{k}\right)\right)\gamma(x)dx\\
&=&g_{k}\left(\frac{1}{k},\frac{y}{k}\right)-\int_0^1\gamma(x)(1-y)\left(1+\log((1-x)(1-y))\right)dx\\
&&+\frac{1}{k}\int_0^1\gamma(x)\left(1+\left(\frac{k(k-1)}{2}-1\right)\frac{y}{k}\right)\left(1+\log\left(\frac{x}{k}+\frac{y}{k}+\left(\frac{k(k-1)}{2}-1\right)\frac{xy}{k^2}\right)\right)dx\\
&&+\frac{1}{k}\sum_{j=1}^{k-1}\int_0^1\gamma(x)(1-(2k-j+1)\frac{y}{k})\left(1+\log(\frac{x}{k}+\frac{y}{k}-(2k-j+1)\frac{xy}{k^2})\right)dx.
\end{eqnarray*}
\end{small}
Applying integration by parts again,
\begin{small}
\begin{eqnarray*}
&&\iint_{[0,1]^2}g_k\left(\frac{x}{k},\frac{y}{k}\right)\mu(dx)\mu(dy)\\
&=&\int_0^1g_k\left(\frac{1}{k},\frac{y}{k}\right)\mu(dy)\\
&&-\int_0^1\gamma(x)\int_0^1(1-y)\left(1+\log((1-x)(1-y))\right)\mu(dy)dx\\
&&+\frac{1}{k}\int_0^1\gamma(x)\int_0^1\left(1+\left(\frac{k(k-1)}{2}-1\right)\frac{y}{k}\right)\left(1+\log\left(\frac{x}{k}+\frac{y}{k}+\left(\frac{k(k-1)}{2}-1\right)\frac{xy}{k^2}\right)\right)\mu(dy)dx\\
&&+\frac{1}{k}\sum_{j=1}^{k-1}\int_0^1\gamma(x)\int_0^1(1-(2k-j+1)\frac{y}{k})\left(1+\log(\frac{x}{k}+\frac{y}{k}-(2k-j+1)\frac{xy}{k^2})\right)\mu(dy)dx.
\end{eqnarray*}
\end{small}
The term
\begin{small}
\[\int_0^1g_k\left(\frac{1}{k},\frac{y}{k}\right)\mu(dy)\]
\end{small}
is linear in $\mu$. Note that
\begin{small}
\begin{eqnarray*}
&&\int_0^1(1-y)\left(1+\log((1-x)(1-y))\right)\mu(dy)\\
&=&(1-y)\left(1+\log((1-x)(1-y))\right)\gamma(y)\Bigg|_{y=0}^{y=1}-\int_0^1\gamma(y)\frac{\partial}{\partial y}\left((1-y)\left(1+\log((1-x)(1-y))\right)\right)dy\\
&=&\int_0^1\gamma(y)\left(2+\log((1-x)(1-y))\right)dy\\
&\implies& -\int_0^1\gamma(x)\int_0^1(1-y)\left(1+\log((1-x)(1-y))\right)\mu(dy)dx=-\int_0^1\int_0^1\gamma(x)\gamma(y)\left(2+\log((1-x)(1-y))\right)dxdy\\
&=&-2(1-\phi)^2-2(1-\phi)\int_0^1\gamma(x)\log(1-x)dx.
\end{eqnarray*}
\end{small}
This is also linear in $\mu$.
\begin{small}
\begin{eqnarray*}
&&\frac{1}{k}\int_0^1\left(1+\left(\frac{k(k-1)}{2}-1\right)\frac{y}{k}\right)\left(1+\log\left(\frac{x}{k}+\frac{y}{k}+\left(\frac{k(k-1)}{2}-1\right)\frac{xy}{k^2}\right)\right)\mu(dy)\\
&=&\frac{1}{k}\left(1+\left(\frac{k(k-1)}{2}-1\right)\frac{y}{k}\right)\left(1+\log\left(\frac{x}{k}+\frac{y}{k}+\left(\frac{k(k-1)}{2}-1\right)\frac{xy}{k^2}\right)\right)\gamma(y)\Bigg|_{y=0}^{y=1}\\&&-\frac{1}{k}\int_0^1\gamma(y)\frac{\partial}{\partial y}\left[\left(1+\left(\frac{k(k-1)}{2}-1\right)\frac{y}{k}\right)\left(1+\log\left(\frac{x}{k}+\frac{y}{k}+\left(\frac{k(k-1)}{2}-1\right)\frac{xy}{k^2}\right)\right)\right]dy\\
&=&\frac{1}{k}\left(1+\left(\frac{k(k-1)}{2}-1\right)\frac{1}{k}\right)\left(1+\log\left(\frac{x}{k}+\frac{1}{k}+\left(\frac{k(k-1)}{2}-1\right)\frac{x}{k^2}\right)\right)\\
&&-\frac{1}{k^2}\int_0^1\gamma(y)\left[\left(\frac{k(k-1)}{2}-1\right)\left(1+\log\left(\frac{x}{k}+\frac{y}{k}+\left(\frac{k(k-1)}{2}-1\right)\frac{xy}{k^2}\right)\right)+\frac{\left(1+\left(\frac{k(k-1)}{2}-1\right)\frac{x}{k}\right)\left(1+\left(\frac{k(k-1)}{2}-1\right)\frac{y}{k}\right)}{\frac{x}{k}+\frac{y}{k}+\left(\frac{k(k-1)}{2}-1\right)\frac{xy}{k^2}}\right]dy\\
&=&\frac{1}{k}\left(1+\left(\frac{k(k-1)}{2}-1\right)\frac{1}{k}\right)\left(1+\log\left(\frac{x}{k}+\frac{1}{k}+\left(\frac{k(k-1)}{2}-1\right)\frac{x}{k^2}\right)\right)\\
&&-\frac{1}{k^2}\int_0^1\gamma(y)\left(\frac{k(k-1)}{2}-1\right)\left(1+\log\left(\frac{\left(1+\left(\frac{k(k-1)}{2}-1\right)\frac{x}{k}\right)\left(1+\left(\frac{k(k-1)}{2}-1\right)\frac{y}{k}\right)-1}{\left(\frac{k(k-1)}{2}-1\right)}\right)\right)dy\\
&&-\frac{1}{k^2}\int_0^1\gamma(y)\left(\frac{k(k-1)}{2}-1\right)\frac{\left(1+\left(\frac{k(k-1)}{2}-1\right)\frac{x}{k}\right)\left(1+\left(\frac{k(k-1)}{2}-1\right)\frac{y}{k}\right)}{\left(1+\left(\frac{k(k-1)}{2}-1\right)\frac{x}{k}\right)\left(1+\left(\frac{k(k-1)}{2}-1\right)\frac{y}{k}\right)-1}dy\\
&\implies&\frac{1}{k}\int_0^1\gamma(x)\int_0^1\left(1+\left(\frac{k(k-1)}{2}-1\right)\frac{y}{k}\right)\left(1+\log\left(\frac{x}{k}+\frac{y}{k}+\left(\frac{k(k-1)}{2}-1\right)\frac{xy}{k^2}\right)\right)\mu(dy)dx\\
&=&\frac{1}{k}\int_0^1\gamma(x)\left(1+\left(\frac{k(k-1)}{2}-1\right)\frac{1}{k}\right)\left(1+\log\left(\frac{x}{k}+\frac{1}{k}+\left(\frac{k(k-1)}{2}-1\right)\frac{x}{k^2}\right)\right)dx\\
&&-\frac{1}{k^2}\int_0^1\int_0^1\gamma(x)\gamma(y)\left(\frac{k(k-1)}{2}-1\right)\left(1+\log\left(\frac{\left(1+\left(\frac{k(k-1)}{2}-1\right)\frac{x}{k}\right)\left(1+\left(\frac{k(k-1)}{2}-1\right)\frac{y}{k}\right)-1}{\left(\frac{k(k-1)}{2}-1\right)}\right)\right)dxdy\\
&&-\frac{1}{k^2}\int_0^1\int_0^1\gamma(x)\gamma(y)\left(\frac{k(k-1)}{2}-1\right)\frac{\left(1+\left(\frac{k(k-1)}{2}-1\right)\frac{x}{k}\right)\left(1+\left(\frac{k(k-1)}{2}-1\right)\frac{y}{k}\right)}{\left(1+\left(\frac{k(k-1)}{2}-1\right)\frac{x}{k}\right)\left(1+\left(\frac{k(k-1)}{2}-1\right)\frac{y}{k}\right)-1}dxdy.
\end{eqnarray*}
\end{small}
\begin{small}
\begin{eqnarray*}
&&\int_0^1\left(1-(2k-j+1)\frac{y}{k}\right)\left(1+\log\left(\frac{x}{k}+\frac{y}{k}-(2k-j+1)\frac{xy}{k^2}\right)\right)\mu(dy)\\
&=&\left(1-(2k-j+1)\frac{y}{k}\right)\left(1+\log\left(\frac{x}{k}+\frac{y}{k}-(2k-j+1)\frac{xy}{k^2}\right)\right)\gamma(y)\Bigg|_{y=0}^{y=1}-\int_0^1\gamma(y)\frac{\partial}{\partial y}\left[\left(1-(2k-j+1)\frac{y}{k}\right)\left(1+\log\left(\frac{x}{k}+\frac{y}{k}-(2k-j+1)\frac{xy}{k^2}\right)\right)\right]dy\\
&=&\left(1-(2k-j+1)\frac{1}{k}\right)\left(1+\log\left(\frac{x}{k}+\frac{1}{k}-(2k-j+1)\frac{x}{k^2}\right)\right)\\
&&-\frac{1}{k}\int_0^1\gamma(y)\left[-(2k-j+1)\left(1+\log(\frac{x}{k}+\frac{y}{k}-(2k-j+1)\frac{xy}{k^2})\right)+\frac{(1-(2k-j+1)\frac{x}{k})(1-(2k-j+1)\frac{y}{k})}{\frac{x}{k}+\frac{y}{k}-(2k-j+1)\frac{xy}{k^2}}\right]dy\\
&=&\left(1-(2k-j+1)\frac{1}{k}\right)\left(1+\log\left(\frac{x}{k}+\frac{1}{k}-(2k-j+1)\frac{x}{k^2}\right)\right)\\
&&-\frac{1}{k}\int_0^1\gamma(y)\left[-(2k-j+1)\left(1+\log\left(\frac{1-(1-(2k-j+1)\frac{x}{k})(1-(2k-j+1)\frac{y}{k})}{(2k-j+1)}\right)\right)+(2k-j+1)\frac{(1-(2k-j+1)\frac{x}{k})(1-(2k-j+1)\frac{y}{k})}{1-(1-(2k-j+1)\frac{x}{k})(1-(2k-j+1)\frac{y}{k})}\right]dy\\
&\implies& \frac{1}{k}\sum_{j=1}^{k-1}\int_0^1\gamma(x)\int_0^1\left(1-(2k-j+1)\frac{y}{k}\right)\left(1+\log(\frac{x}{k}+\frac{y}{k}-(2k-j+1)\frac{xy}{k^2})\right)\mu(dy)dx\\
&=&\frac{1}{k}\sum_{j=1}^{k-1}\int_0^1\gamma(x)\left(1-(2k-j+1)\frac{1}{k}\right)\left(1+\log(\frac{x}{k}+\frac{1}{k}-(2k-j+1)\frac{x}{k^2})\right)dx\\
&&-\frac{1}{k^2}\sum_{j=1}^{k-1}\int_0^1\int_0^1\gamma(x)\gamma(y)\Bigg[-(2k-j+1)\left(1+\log\left(\frac{1-(1-(2k-j+1)\frac{x}{k})(1-(2k-j+1)\frac{y}{k})}{(2k-j+1)}\right)\right)+\\&&+(2k-j+1)\frac{(1-(2k-j+1)\frac{x}{k})(1-(2k-j+1)\frac{y}{k})}{1-(1-(2k-j+1)\frac{x}{k})(1-(2k-j+1)\frac{y}{k})}\Bigg]dxdy.
\end{eqnarray*}
\end{small}
The non-linear part of the $F_{k,m}$ is then
\begin{small}
\begin{eqnarray*}
&&-\frac{1}{k^2}\left(\frac{k(k-1)}{2}-1\right)\int_0^1\int_0^1\gamma(x)\gamma(y)\log\left(1-\frac{1}{\left(1+\left(\frac{k(k-1)}{2}-1\right)\frac{x}{k}\right)\left(1+\left(\frac{k(k-1)}{2}-1\right)\frac{y}{k}\right)}\right)dxdy\\
&&-\frac{1}{k^2}\left(\frac{k(k-1)}{2}-1\right)\int_0^1\int_0^1\gamma(x)\gamma(y)\frac{1}{1-\frac{1}{\left(1+\left(\frac{k(k-1)}{2}-1\right)\frac{x}{k}\right)\left(1+\left(\frac{k(k-1)}{2}-1\right)\frac{y}{k}\right)}}dxdy\\
&&+\frac{1}{k^2}\sum_{j=1}^{k-1}(2k-j+1)\int_0^1\int_0^1\gamma(x)\gamma(y)\left[\log\left(1-(1-(2k-j+1)\frac{x}{k})(1-(2k-j+1)\frac{y}{k})\right)-\frac{1}{1-(1-(2k-j+1)\frac{x}{k})(1-(2k-j+1)\frac{y}{k})}\right]dxdy.
\end{eqnarray*}
\end{small}
This is of the form
\begin{small}
\[-\sum_{i=0}^{\infty}\left(\int_0^1\gamma(x)a_{i,k}\left(x\right)dx\right)^2\]
\end{small}
for some functions $a_{i,k}(x)$ in $x$. Therefore, $F_{k,m}$ is concave, as required.
\end{proof}
Since $F_{k,m}$ is concave on the compact convex space $\cal{M}_{\phi}$, Dubins' theorem~\cite{Dubins} implies that the minimizers of $F_{k,m}$ are measures supported on at most two points. This is because we only have one linear condition $\mathop{\mathbb{E}}_{x\sim\mu}[x]=\phi$ in defining $\cal{M}_{\phi}$. Furthermore, we have the following structural improvement for the minimizers of $F_{k,m}$.
\begin{lemma}\label{measuretypes}Suppose $\phi<\frac{1}{2}$ and $k\geq 3$. If $\mu$ is a minimizer of $F_{k,m}:\cal{M}_{\phi}\rightarrow\mathbb{R}$ that is supported on exactly two points, then either $0$ or $1$ is in the support of $\mu$.
\end{lemma}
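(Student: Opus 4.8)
\emph{Set-up.} The plan is to argue by contradiction: assume $\mu=(1-p)\delta_a+p\,\delta_b$ is a minimizer with $0<a<b<1$ and $0<p<1$, and produce a contradiction. Since $F_{k,m}$ is concave on the convex set $\cal{M}_\phi$ (Proposition~\ref{concavity}), a minimizer has nonnegative one-sided directional derivatives: for every $\nu\in\cal{M}_\phi$, $\left.\tfrac{d}{dt}\right|_{t=0^+}F_{k,m}\big((1-t)\mu+t\nu\big)\ge 0$. Computing this first variation directly (using that $g_{k,m}$ is symmetric and the $h_{k,m}$-term is linear in the measure) shows it equals $\int_0^1\psi_\mu\,d(\nu-\mu)$, where
\[\psi_\mu(y):=2\!\int_0^1 g_{k,m}\!\left(\tfrac{x}{k},\tfrac{y}{k}\right)d\mu(x)-h_{k,m}\!\left(\tfrac{y}{k}\right)=2(1-p)g_{k,m}\!\left(\tfrac{a}{k},\tfrac{y}{k}\right)+2p\,g_{k,m}\!\left(\tfrac{b}{k},\tfrac{y}{k}\right)-h_{k,m}\!\left(\tfrac{y}{k}\right).\]
Thus $\int_0^1\psi_\mu\,d\nu\ge\int_0^1\psi_\mu\,d\mu$ for all $\nu\in\cal{M}_\phi$. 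By the standard duality argument for measures with prescribed mean, this forces an affine $\ell$ with $\ell\le\psi_\mu$ on $[0,1]$ and $\ell(a)=\psi_\mu(a)$, $\ell(b)=\psi_\mu(b)$; equivalently $q:=\psi_\mu-\ell\ge0$ on $[0,1]$ vanishes at the two interior points $a,b$, so in particular $\psi_\mu'(a)=\psi_\mu'(b)=\ell'$ and $\psi_\mu''(a),\psi_\mu''(b)\ge0$.

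\emph{Convexity profile of $\psi_\mu$.} Next I would compute $\psi_\mu''$. One has $\partial_y^2 h_{k,m}(y/k)=-\tfrac1y-\tfrac1{1-y}$, and writing each $-\phi\log\phi$-summand of $g_{k,m}(x/k,y/k)$ with $\phi$ affine in $y$ of slope $\beta$ one gets $\partial_y^2 g_{k,m}(x/k,y/k)=-\tfrac{1-x}{1-y}-\tfrac{\beta_2(x)^2}{\phi_2(x,y)}-\sum_{j=1}^{k-1}\tfrac{\beta_{3,j}(x)^2}{\phi_{3,j}(x,y)}$, where $\phi_2,\phi_{3,j}$ are the positive ``collision probabilities'' from Section~\ref{entropies} and $\beta_2(x)=\tfrac1k\!\left(1+(\tfrac{k(k-1)}{2}-1)\tfrac xk\right)>0$ because $k\ge3$. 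Using $\int_0^1(1-x)\,d\mu(x)=1-\phi$ this gives
\[\psi_\mu''(y)=\frac{2\phi-1}{1-y}+\frac1y-2\big[(1-p)\,\Xi(a,y)+p\,\Xi(b,y)\big],\qquad \Xi(x,y):=\frac{\beta_2(x)^2}{\phi_2(x,y)}+\sum_{j=1}^{k-1}\frac{\beta_{3,j}(x)^2}{\phi_{3,j}(x,y)}>0.\]
Since $\phi<\tfrac12$, the first two terms are $\le0$ for $y\ge\tfrac1{2(1-\phi)}$, so $\psi_\mu$ is strictly concave on $\big[\tfrac1{2(1-\phi)},1\big)$; as $q\ge0$ vanishes at $b$ and strictly concave functions have no interior minimum, this forces $b<\tfrac1{2(1-\phi)}<1$. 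On the other hand, $a>0$ keeps $\phi_2(a,y),\phi_{3,j}(a,y)$ bounded below for $y$ near $0$, so the $\tfrac1y$-term dominates and $\psi_\mu''(y)\to+\infty$ as $y\to0^+$; hence $\psi_\mu$ is strictly convex near $0$. Finally $\psi_\mu$ is real-analytic on $(0,1)$ with $\psi_\mu''\not\equiv0$, so $\psi_\mu$ is affine on no subinterval.

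\emph{Conclusion.} The crux is to upgrade the above to the statement that \emph{$\psi_\mu''$ changes sign exactly once on $(0,1)$}, i.e. $\psi_\mu$ is convex on $[0,t_0]$ and concave on $[t_0,1]$ for a single $t_0\in(0,1)$ (consistent with the convexity near $0$ and concavity near $1$ just found). Granting this, the lower convex envelope $\psi_\mu^{**}$ of a function that is strictly convex then strictly concave on $[0,1]$ agrees with $\psi_\mu$ only on a set of the form $[0,\tau]\cup\{1\}$ for a single $\tau\le t_0$, and is affine on $[\tau,1]$. Since $q=\psi_\mu-\ell\ge0$ vanishes at $a$ and $b$, both lie in this contact set and $\psi_\mu^{**}$ is affine on $[a,b]$; as $\psi_\mu$ is affine on no subinterval, $a,b$ cannot both lie in $[0,\tau]$, so $b=1$ — contradicting $b<1$. (In the degenerate case $\tau=0$ one gets $a=0$ instead; either way $0$ or $1$ lies in $\operatorname{supp}(\mu)$.)

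\emph{Main obstacle.} The single-sign-change property of $\psi_\mu''$ is the one genuinely computational point. The term $\tfrac{2\phi-1}{1-y}+\tfrac1y$ is strictly decreasing on $(0,1)$ when $\phi<\tfrac12$, but the correction $2[(1-p)\Xi(a,y)+p\Xi(b,y)]$ need not be monotone in $y$: some slopes $\beta_{3,j}(x)$ are negative for $x$ near $1$, so individual factors $\tfrac1{\phi_{3,j}(x,y)}$ may increase in $y$. I expect the argument here to use $k\ge3$ together with the bound $b<\tfrac1{2(1-\phi)}$ already obtained to control $\Xi(a,\cdot)$ and $\Xi(b,\cdot)$ and show $\psi_\mu''$ vanishes at most once; once that is in place, the rest of the proof is routine.
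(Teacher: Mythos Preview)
Your variational set-up is correct and coincides with the paper's: your $\psi_\mu$ is exactly the paper's $f_{k,m}$, and the conclusions $\psi_\mu'(a)=\psi_\mu'(b)$, $\psi_\mu''(a),\psi_\mu''(b)\ge 0$, together with Rolle's theorem giving some $z\in(a,b)$ with $\psi_\mu''(z)\le 0$, are precisely what the paper derives.

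The genuine gap is exactly where you flag it: you do not establish the single-sign-change property of $\psi_\mu''$, and your proposed route (using the preliminary bound $b<\tfrac{1}{2(1-\phi)}$ to control $\Xi$) is not how the paper proceeds and would likely be painful. The paper's resolution is a one-line trick you are missing: rather than studying $\psi_\mu''$ directly, compute $\dfrac{d}{ds}\big(s\,\psi_\mu''(s)\big)=\psi_\mu''(s)+s\,\psi_\mu'''(s)$. When you add $\psi_\mu''$ to $s\psi_\mu'''$, the awkward terms cancel term-by-term and one is left with
\[
\frac{d}{ds}\big(s\,\psi_\mu''(s)\big)=-\frac{1-2\phi}{(1-s)^2}\;-\;\big[\text{a sum of manifestly nonnegative terms}\big]\;<\;0
\]
for $\phi<\tfrac12$. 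Thus $s\mapsto s\,\psi_\mu''(s)$ is strictly decreasing on $(0,1)$. Since $s>0$, this immediately gives that $\psi_\mu''$ changes sign at most once (from $+$ to $-$), which is what you wanted; but in fact the contradiction is even more direct: for any $z\in(a,b)$ one has $z\,\psi_\mu''(z)>b\,\psi_\mu''(b)\ge 0$, hence $\psi_\mu''(z)>0$, contradicting Rolle. No convex-envelope argument is needed.

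So your outline is right up to the crux, but the missing idea is the ``multiply by $s$ then differentiate'' simplification; once you have it, the proof finishes in two lines.
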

\begin{proof}Suppose first that $\mu=p\delta_x+(1-p)\delta_y$, where $0<p<1$ and $0<y<x<1$, is a probability measure that is a minimizer of $F$. Then, given $z\in\left[0,1\right]$, we can write $z=\lambda x+(1-\lambda)y$, $\lambda\in\mathbb{R}$. Consider the measure
\begin{small}
\[\mu_t:=\mu+t(\delta_z-\lambda\delta_x-(1-\lambda)\delta_y).\]
\end{small}
Since $0<p<1$, for small enough $t$, $\mu_t$ is a probability measure. Furthermore, $\mu_t\in\cal{M}_{\phi}$ for small enough $t>0$. Since $\mu$ is a minimizer,
\begin{small}
\[\lim_{t\rightarrow 0^+}\frac{F_{k,m}(\mu_t)-F_{k,m}(\mu)}{t}\geq 0.\]
\end{small}
Explicitly, this is equivalent to the condition that
\begin{small}
\begin{equation}\label{fkineq}f_{k,m}(z)-\lambda f_{k,m}(x)-(1-\lambda)f_{k,m}(y)\geq 0,\end{equation}
\end{small}
where
\begin{small}
\begin{eqnarray*}f_{k,m}(s)&:=&2\left(pg_{k,m}\left(\frac{x}{k},\frac{s}{k}\right)+(1-p)g_{k,m}\left(\frac{y}{k},\frac{s}{k}\right)\right)-h_{k,m}\left(\frac{s}{k}\right)\\
&=&-2p\left((1-x)(1-s)\log((1-x)(1-s))+\left(\frac{x}{k}+\frac{s}{k}+\left(\frac{k(k-1)}{2}-1\right)\frac{xs}{k^2}\right)\log\left(\frac{x}{k}+\frac{s}{k}+\left(\frac{k(k-1)}{2}-1\right)\frac{xs}{k^2}\right)\right)\\
&&-2p\sum_{j=1}^{k-1}\left(\frac{x}{k}+\frac{s}{k}-(2k-j+1)\frac{xs}{k^2}\right)\log\left(\frac{x}{k}+\frac{s}{k}-(2k-j+1)\frac{xs}{k^2}\right)\\
&&-2(1-p)\left((1-y)(1-s)\log((1-y)(1-s))+\left(\frac{y}{k}+\frac{s}{k}+\left(\frac{k(k-1)}{2}-1\right)\frac{ys}{k^2}\right)\log\left(\frac{y}{k}+\frac{s}{k}+\left(\frac{k(k-1)}{2}-1\right)\frac{ys}{k^2}\right)\right)\\
&&-2(1-p)\sum_{j=1}^{k-1}\left(\frac{y}{k}+\frac{s}{k}-(2k-j+1)\frac{ys}{k^2}\right)\log\left(\frac{y}{k}+\frac{s}{k}-(2k-j+1)\frac{ys}{k^2}\right)\\
&&+s\log \frac{s}{k}+(1-s)\log(1-s)+(1-2\phi)(1-s)\log m.
\end{eqnarray*}
\end{small}
$f_{k,m}$ is smooth within $\left(0,1\right)$. By inequality~\eqref{fkineq}, since $z\in\left[0,1\right]$ was arbitrary, the function $f_{k,m}$ lies above the line connecting $(x,f_{k,m}(x))$ and $(y,f_{k,m}(y))$. From these, it follows that a) $f_{k,m}'(x)=f_{k,m}'(y)=\frac{f_{k,m}(x)-f_{k,m}(y)}{x-y}$ and b) $f_{k,m}''(x),f_{k,m}''(y)\geq 0$. By Rolle's theorem, a) implies that $f_{k,m}''(z)\leq 0$ for some $z\in[y,x]$. We show these are not possible with our specific function $f_{k,m}$. Indeed, we have
\begin{small}
\begin{eqnarray*}
f_{k,m}'(s)&=&-2p\left(-(1-x)\left(1+\log((1-x)(1-s))\right)+\left(\frac{1}{k}+\left(\frac{k(k-1)}{2}-1\right)\frac{x}{k^2}\right)\left(1+\log\left(\frac{x}{k}+\frac{s}{k}+\left(\frac{k(k-1)}{2}-1\right)\frac{xs}{k^2}\right)\right)\right)\\
&&-2p\sum_{j=1}^{k-1}\left(\frac{1}{k}-(2k-j+1)\frac{x}{k^2}\right)\left(1+\log\left(\frac{x}{k}+\frac{s}{k}-(2k-j+1)\frac{xs}{k^2}\right)\right)\\
&&-2(1-p)\left(-(1-y)\left(1+\log((1-y)(1-s))\right)+\left(\frac{1}{k}+\left(\frac{k(k-1)}{2}-1\right)\frac{y}{k^2}\right)\left(1+\log\left(\frac{y}{k}+\frac{s}{k}+\left(\frac{k(k-1)}{2}-1\right)\frac{ys}{k^2}\right)\right)\right)\\
&&-2(1-p)\sum_{j=1}^{k-1}\left(\frac{1}{k}-(2k-j+1)\frac{y}{k^2}\right)\left(1+\log\left(\frac{y}{k}+\frac{s}{k}-(2k-j+1)\frac{ys}{k^2}\right)\right)\\
&&+\log\frac{s}{1-s}-\log k-(1-2\phi)\log m
\end{eqnarray*}
\end{small}
and
\begin{small}
\begin{eqnarray*}
f_{k,m}''(s)&=&2p\frac{1-x}{s-1}-2p\frac{\left(\frac{1}{k}+\left(\frac{k(k-1)}{2}-1\right)\frac{x}{k^2}\right)^2}{\frac{x}{k}+\frac{s}{k}+\left(\frac{k(k-1)}{2}-1\right)\frac{xs}{k^2}}-2p\sum_{j=1}^{k-1}\frac{\left(\frac{1}{k}-(2k-j+1)\frac{x}{k^2}\right)^2}{\frac{x}{k}+\frac{s}{k}-(2k-j+1)\frac{xs}{k^2}}+2(1-p)\frac{1-y}{s-1}-2(1-p)\frac{\left(\frac{1}{k}+\left(\frac{k(k-1)}{2}-1\right)\frac{y}{k^2}\right)^2}{\frac{y}{k}+\frac{s}{k}+\left(\frac{k(k-1)}{2}-1\right)\frac{ys}{k^2}}\\
&&-2(1-p)\sum_{j=1}^{k-1}\frac{\left(\frac{1}{k}-(2k-j+1)\frac{y}{k^2}\right)^2}{\frac{y}{k}+\frac{s}{k}-(2k-j+1)\frac{ys}{k^2}}-\frac{1}{s(s-1)}\\
&=&2\frac{1-\phi}{s-1}-2p\frac{\frac{1}{k}+\left(\frac{k(k-1)}{2}-1\right)\frac{x}{k^2}}{s+\frac{x}{1+\left(\frac{k(k-1)}{2}-1\right)\frac{x}{k}}}-2p\sum_{j=1}^{k-1}\frac{\frac{1}{k}-(2k-j+1)\frac{x}{k^2}}{s+\frac{x}{1-(2k-j+1)\frac{x}{k}}}-2(1-p)\frac{\frac{1}{k}+\left(\frac{k(k-1)}{2}-1\right)\frac{y}{k^2}}{s+\frac{y}{1+\left(\frac{k(k-1)}{2}-1\right)\frac{y}{k}}}-2(1-p)\sum_{j=1}^{k-1}\frac{\frac{1}{k}-(2k-j+1)\frac{y}{k^2}}{s+\frac{y}{1-(2k-j+1)\frac{y}{k}}}-\frac{1}{s(s-1)}.
\end{eqnarray*}
\end{small}
We show that $sf_{k,m}''(s)$ is a decreasing function. Multiplication by a function, in this case the identity function, is a trick we also use later in the paper. Note that
\begin{small}
\begin{eqnarray*}
f_{k,m}'''(s)&=&-\frac{1-2\phi}{(s-1)^2}+2p\frac{\frac{1}{k}+\left(\frac{k(k-1)}{2}-1\right)\frac{x}{k^2}}{\left(s+\frac{x}{1+\left(\frac{k(k-1)}{2}-1\right)\frac{x}{k}}\right)^2}+2p\frac{1}{k}\sum_{j=1}^{k-1}\frac{1-(2k-j+1)\frac{x}{k}}{\left(s+\frac{x}{1-(2k-j+1)\frac{x}{k}}\right)^2}\\&&+2(1-p)\frac{\frac{1}{k}+\left(\frac{k(k-1)}{2}-1\right)\frac{y}{k^2}}{\left(s+\frac{y}{1+\left(\frac{k(k-1)}{2}-1\right)\frac{y}{k}}\right)^2}+2(1-p)\frac{1}{k}\sum_{j=1}^{k-1}\frac{1-(2k-j+1)\frac{y}{k}}{\left(s+\frac{y}{1-(2k-j+1)\frac{y}{k}}\right)^2}-\frac{1}{s^2}
\end{eqnarray*}
\end{small}
It follows that
\begin{small}
\begin{eqnarray*}
&&\frac{d}{ds}(sf_{k,m}''(s))\\&=&2\frac{1-\phi}{s-1}-2p\frac{\frac{1}{k}+\left(\frac{k(k-1)}{2}-1\right)\frac{x}{k^2}}{s+\frac{x}{1+\left(\frac{k(k-1)}{2}-1\right)\frac{x}{k}}}-2p\sum_{j=1}^{k-1}\frac{\frac{1}{k}-(2k-j+1)\frac{x}{k^2}}{s+\frac{x}{1-(2k-j+1)\frac{x}{k}}}-2(1-p)\frac{\frac{1}{k}+\left(\frac{k(k-1)}{2}-1\right)\frac{y}{k^2}}{s+\frac{y}{1+\left(\frac{k(k-1)}{2}-1\right)\frac{y}{k}}}-2(1-p)\sum_{j=1}^{k-1}\frac{\frac{1}{k}-(2k-j+1)\frac{y}{k^2}}{s+\frac{y}{1-(2k-j+1)\frac{y}{k}}}-\frac{1}{s(s-1)}\\
&&-\frac{s(1-2\phi)}{(s-1)^2}+2ps\frac{\frac{1}{k}+\left(\frac{k(k-1)}{2}-1\right)\frac{x}{k^2}}{\left(s+\frac{x}{1+\left(\frac{k(k-1)}{2}-1\right)\frac{x}{k}}\right)^2}+2ps\frac{1}{k}\sum_{j=1}^{k-1}\frac{1-(2k-j+1)\frac{x}{k}}{\left(s+\frac{x}{1-(2k-j+1)\frac{x}{k}}\right)^2}\\&&+2(1-p)s\frac{\frac{1}{k}+\left(\frac{k(k-1)}{2}-1\right)\frac{y}{k^2}}{\left(s+\frac{y}{1+\left(\frac{k(k-1)}{2}-1\right)\frac{y}{k}}\right)^2}+2(1-p)s\frac{1}{k}\sum_{j=1}^{k-1}\frac{1-(2k-j+1)\frac{y}{k}}{\left(s+\frac{y}{1-(2k-j+1)\frac{y}{k}}\right)^2}-\frac{1}{s}\\
&=&-\frac{1-2\phi}{(1-s)^2}-2p\frac{x}{\left(s+\frac{x}{1+\left(\frac{k(k-1)}{2}-1\right)\frac{x}{k}}\right)^2}-2p\frac{1}{k}\sum_{j=1}^{k-1}\frac{x}{\left(s+\frac{x}{1-(2k-j+1)\frac{x}{k}}\right)^2}-2(1-p)\frac{y}{\left(s+\frac{y}{1+\left(\frac{k(k-1)}{2}-1\right)\frac{y}{k}}\right)^2}-2(1-p)\frac{1}{k}\sum_{j=1}^{k-1}\frac{y}{\left(s+\frac{y}{1-(2k-j+1)\frac{y}{k}}\right)^2}\\
&\leq &-\frac{1-2\phi}{(1-s)^2}.
\end{eqnarray*}
\end{small}
Therefore, $sf_{k,m}''(s)$ is a strictly decreasing function when $\phi<\frac{1}{2}$. This contradicts the existence of $z\in(y,x)$ such that $f_{k,m}''(z)\leq 0$ since $f_{k,m}''(x),f_{k,m}''(y)\geq 0$.
\end{proof}
Therefore, in order to bound $F_{k,m}(\mu)$ from below, it suffices to consider three types of measures:
\begin{enumerate}[I)]
\item $\mu=\delta_{\phi}$;
\item $\mu=(1-p)\delta_0+p\delta_x$, where $0<p<1$ and $px=\phi$; and
\item $\mu=(1-p)\delta_1+p\delta_x$, where $0<p<1$ and $p=\frac{1-\phi}{1-x}$.
\end{enumerate}
\vspace{3mm}
We address each of these in the next three sections.
\section{Type I optimization}
In the case of type I measures $\delta_{\phi}$, we have
\begin{small}
\begin{eqnarray*}&&F_{k,m}(\delta_{\phi})\\
&=&g_{k,m}\left(\frac{\phi}{k},\frac{\phi}{k}\right)-h_{k,m}\left(\frac{\phi}{k}\right)\\
&=&-(1-\phi)^2\log((1-\phi)^2)-\left(\frac{2\phi}{k}+\left(\frac{k(k-1)}{2}-1\right)\frac{\phi^2}{k^2}\right)\log\left(\frac{2\phi}{k}+\left(\frac{k(k-1)}{2}-1\right)\frac{\phi^2}{k^2}\right)-\sum_{j=1}^{k-1}\left(\frac{2\phi}{k}-(2k-j+1)\frac{\phi^2}{k^2}\right)\log\left(\frac{2\phi}{k}-(2k-j+1)\frac{\phi^2}{k^2}\right)\\
&&+\phi\log\frac{\phi}{k}+(1-\phi)\log(1-\phi)-\phi(1-\phi)\log m
\end{eqnarray*}
\end{small}

\begin{proposition}For integers $k\geq 5$ and $1\leq m\leq \sqrt{k}$, and for every $0<\phi<\frac{1}{2}$, we have $F_{k,m}(\delta_{\phi})>0$.
\end{proposition}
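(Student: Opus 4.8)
Write $F(\phi):=F_{k,m}(\delta_\phi)$, the explicit elementary function of $\phi$ displayed above, and recall its probabilistic meaning. If $\boldsymbol X$ is the random element taking each of $0,\varepsilon,\dots,\varepsilon^{k-1}$ with probability $\tfrac\phi k$ and each of $1,\zeta,\dots,\zeta^{m-1}$ with probability $\tfrac{1-\phi}m$, and $\boldsymbol Y$ is an independent copy, then the displayed formula is precisely $F(\phi)=H(\boldsymbol X\boldsymbol Y)-H(\boldsymbol X)$ (the $\zeta$-masses contributing $(1-\phi)^2\log m-(1-\phi)\log m=-\phi(1-\phi)\log m$). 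In particular $F(0)=0$, since every summand vanishes at $\phi=0$.

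The plan is to show $F$ is strictly concave on $(0,1/2)$; since $F(0)=0$, strict concavity gives $F(\phi)>(1-2\phi)F(0)+2\phi F(1/2)=2\phi\,F(1/2)$ on $(0,1/2)$, reducing the statement to the single inequality $F(1/2)\ge 0$. Differentiating the displayed formula twice — writing $q_0=\tfrac{2\phi}k+\tfrac{(k-2)(k+1)}{2k^2}\phi^2$, $q_j=\tfrac{2\phi}k-\tfrac{2k-j+1}{k^2}\phi^2$ for $1\le j\le k-1$, so $q_0''=\tfrac{(k-2)(k+1)}{k^2}>0$ and $q_j''=-\tfrac{2(2k-j+1)}{k^2}<0$ — one obtains
\[F''(\phi)=-4\log(1-\phi)-6+\frac{1}{\phi(1-\phi)}+2\log m-q_0''\big(1+\log q_0\big)-\frac{(q_0')^2}{q_0}-\sum_{j=1}^{k-1}\left(q_j''\big(1+\log q_j\big)+\frac{(q_j')^2}{q_j}\right).\]
For $\phi<1/2$ each $q_i<\tfrac{2\phi}k<\tfrac1k\le\tfrac1e$, hence $1+\log q_i<0$; so on $(0,1/2)$ the only positive terms are $\tfrac{1}{\phi(1-\phi)}$, $2\log m$, $-q_0''(1+\log q_0)$, while $-4\log(1-\phi)-6$, $-\tfrac{(q_0')^2}{q_0}$ and each $-q_j''(1+\log q_j)-\tfrac{(q_j')^2}{q_j}$ are negative. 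I would dominate the positive part in two overlapping ranges: for $\phi$ small, $\tfrac{(q_0')^2}{q_0}+\sum_{j\ge1}\tfrac{(q_j')^2}{q_j}$ beats $\tfrac1{\phi(1-\phi)}$ (both $\asymp 1/\phi$, the former larger because the nilpotent mass of $\boldsymbol X\boldsymbol Y$ is split into $k$ atoms of size $\asymp 2\phi/k$); for $\phi$ away from $0$, $\sum_{j\ge1}|q_j''|\,|1+\log q_j|\ge\big(\tfrac{(k-1)(3k+2)}{k^2}\big)\log\tfrac ke$ beats $2\log m+|q_0''(1+\log q_0)|$ using $\log m\le\tfrac12\log k$; the borderline is $k=5$ near $\phi=1/2$, checked directly.

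For the reduced inequality $F(1/2)\ge0$, condition $\boldsymbol X\boldsymbol Y$ on whether it lies in the nilpotent part — probability $\phi(2-\phi)$, conditional law the normalisation $p_j:=q_j/(\phi(2-\phi))$ of $(q_0,\dots,q_{k-1})$, with entropy $H_k(\phi)$ — or in the unit part, where it is uniform on $m$ elements, to get $F(\phi)=\big(h(\phi(2-\phi))-h(\phi)\big)+\phi(2-\phi)H_k(\phi)-\phi\log k-\phi(1-\phi)\log m$ with $h$ the binary entropy. The point is that $kp_j=1-\tfrac{\phi(k-j+1)}{k(2-\phi)}\le 1$ for $j\ge1$ (here $\phi<1/2$ is used), so $H_k(\phi)=\log k-D$ with $D=\sum_j p_j\log(kp_j)\ge0$, and $D$ is bounded above by $p_0\log(kp_0)$ together with the elementary estimate $(1-t)\log(1-t)\le\tfrac34 t^2-t$ (valid for $0\le t\le\tfrac13$, applicable since $1-kp_j\le\tfrac\phi{2-\phi}\le\tfrac13$). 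This yields an explicit elementary lower bound for $F(\phi)$ that is positive at $\phi=1/2$ for all $k\ge5$, $1\le m\le\sqrt k$, the tightest case being $(k,m)=(5,2)$ with $F_{5,2}(\delta_{1/2})\approx0.027$; the inequality is in fact false at $\phi=1/2$ for $(k,m)=(4,2)$, which is where $k\ge5$ enters. The main obstacle is precisely this tightness near $(k,m,\phi)=(5,2,1/2)$: a first-order estimate for $(1-t)\log(1-t)$, or replacing $\log m$ by $\tfrac12\log k$ in the $-\phi(1-\phi)\log m$ term, would not suffice, and the concavity estimate above is correspondingly delicate there.
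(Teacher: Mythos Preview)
Your overall architecture matches the paper's exactly: both prove $F(0)=0$, $F(1/2)\ge 0$, and strict concavity of $\phi\mapsto F_{k,m}(\delta_\phi)$ on $(0,1/2)$. Your treatment of the endpoint $F(1/2)$ via the conditioning decomposition
\[
F(\phi)=\bigl(h(\phi(2-\phi))-h(\phi)\bigr)+\phi(2-\phi)\bigl(\log k-D\bigr)-\phi\log k-\phi(1-\phi)\log m
\]
is cleaner than the paper's Riemann-sum-in-$j$ plus monotonicity-in-$k$ computation, and the bound $D\le p_0\log(kp_0)+\tfrac1k\sum_{j\ge1}\bigl(\tfrac34 t_j^2-t_j\bigr)$ does clear $(k,m)=(5,2)$ with a little room.

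The concavity half, however, has gaps. First, the claim ``each $q_i<2\phi/k$'' is false for $i=0$: since $q_0''>0$ one has $q_0>2\phi/k$. The conclusion you actually need, $q_0<1/e$, is still true for $k\ge5$ and $\phi<1/2$ (one checks $q_0\le\tfrac1k+\tfrac{(k-2)(k+1)}{8k^2}\le 0.29$), but the justification must be repaired. Second, and more seriously, the two-range domination is not made quantitative. In your ``$\phi$ away from $0$'' regime you compare $\sum_{j\ge1}|q_j''|\,|1+\log q_j|$ only against $2\log m+|q_0''(1+\log q_0)|$, leaving the positive term $\tfrac{1}{\phi(1-\phi)}$ (equal to $4$ at $\phi=1/2$) unaccounted for, and you never specify where the two regimes meet or why the estimates overlap. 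The paper avoids a range split entirely by a different device: after dropping the favourable $-\sum_{j\ge1}(q_j')^2/q_j$ terms and replacing the $j$-sum by an integral, it shows the resulting upper bound on $F''$ is \emph{increasing} in $\phi$, so that negativity at the single point $\phi=1/2$ suffices for all of $(0,1/2)$. Either importing that monotonicity trick or supplying an explicit crossover with both halves verified would close your argument.
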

\begin{proof}
First, note that $F_{k,m}(\delta_0)=0$ and for every $k\geq 3$, and 

\begin{small}
\begin{eqnarray*}
&&F_{k,m}\left(\delta_{\frac{1}{2}}\right)\\
&=&-\frac{1}{4}\log\frac{1}{4}-\left(\frac{1}{k}+\left(\frac{k(k-1)}{2}-1\right)\frac{1}{4k^2}\right)\log\left(\frac{1}{k}+\left(\frac{k(k-1)}{2}-1\right)\frac{1}{4k^2}\right)-\sum_{j=1}^{k-1}\left(\frac{1}{k}-(2k-j+1)\frac{1}{4k^2}\right)\log\left(\frac{1}{k}-(2k-j+1)\frac{1}{4k^2}\right)\\
&&+\frac{1}{2}\log\frac{1}{2k}+\frac{1}{2}\log\frac{1}{2}-\frac{1}{4}\log m\\
&=&-\left(\frac{k^2+7k-2}{8k^2}\right)\log\left(\frac{k^2+7k-2}{8k^2}\right)-\frac{1}{2}\log(2k)-\sum_{j=1}^{k-1}\left(\frac{2k+j-1}{4k^2}\right)\log\left(\frac{2k+j-1}{4k^2}\right)-\frac{1}{4}\log m\\
&=&-\left(\frac{k^2+7k-2}{8k^2}\right)\log\left(\frac{k^2+7k-2}{8k^2}\right)-\frac{1}{2}\log(2k)-\frac{1}{k}\sum_{j=0}^{k-1}\left(\frac{1}{2}+\frac{j}{4k}\right)\log\left(\frac{1}{2}+\frac{j}{4k}\right)+(\log k)\sum_{j=0}^{k-2}\left(\frac{2k+j}{4k^2}\right)\\&&+\frac{1}{k}\left(\frac{1}{2}+\frac{k-1}{4k}\right)\log\left(\frac{1}{2}+\frac{k-1}{4k}\right)-\frac{1}{4}\log m\\
&\geq& -\left(\frac{k^2+7k-2}{8k^2}\right)\log\left(\frac{k^2+7k-2}{8k^2}\right)-\frac{1}{2}\log(2k)-\int_0^1\left(\frac{1}{2}+\frac{t}{4}\right)\log\left(\frac{1}{2}+\frac{t}{4}\right)dt+(\log k)\frac{(k-1)(5k-2)}{8k^2}\\&&+\frac{1}{k}\left(\frac{1}{2}+\frac{k-1}{4k}\right)\log\left(\frac{1}{2}+\frac{k-1}{4k}\right)-\frac{1}{4}\log \lfloor\sqrt{k}\rfloor.
\end{eqnarray*}
\end{small}

The latter quantity is positive for $k=5,6$. We show that
\begin{small}
\begin{eqnarray*}
\frac{d}{dk}\left(-\left(\frac{k^2+7k-2}{8k^2}\right)\log\left(\frac{k^2+7k-2}{8k^2}\right)-\frac{1}{2}\log(2k)-\int_0^1\left(\frac{1}{2}+\frac{x}{4}\right)\log\left(\frac{1}{2}+\frac{x}{4}\right)dx+(\log k)\frac{(k-1)(5k-2)}{8k^2}+\frac{1}{k}\left(\frac{1}{2}+\frac{k-1}{4k}\right)\log\left(\frac{1}{2}+\frac{k-1}{4k}\right)-\frac{\log k}{8}\right)\geq 0,
\end{eqnarray*}
\end{small}
for $k\geq 6$, from which it would follows that $F_{k,m}\left(\delta_{\frac{1}{2}}\right)>0$ for integers $k\geq 5$ and $1\leq m\leq \sqrt{k}$. Indeed, the derivative is equal to
\begin{small}
\begin{eqnarray*}
\frac{(7k-4)\log\left(\frac{k^2+7k-2}{8k}\right)+(4-6k)\log\left(\frac{3}{4}-\frac{1}{4k}\right)}{8k^3}.
\end{eqnarray*}
\end{small}

This is positive for $k\geq 1$. Therefore, to show that $F_{k,m}\left(\delta_{\phi}\right)>0$ for $0<\phi<\frac{1}{2}$, integers $k\geq 5$ and $1\leq m\leq \sqrt{k}$, it suffices to show that $F_{k,m}\left(\delta_{\phi}\right)$ is strictly concave in for such $\phi$, given such $k,m$. We have
\begin{small}
\begin{eqnarray*}
&&\frac{d^2}{d\phi^2}F_{k,m}\left(\delta_{\phi}\right)\\
&=&\frac{1}{1-\phi}+\frac{1}{\phi}-4\log(1-\phi)-6-\frac{2}{k^2}\left(\frac{k(k-1)}{2}-1\right)-\frac{2}{k^2}\left(\frac{k(k-1)}{2}-1\right)\log\left(\frac{2\phi}{k}+\left(\frac{k(k-1)}{2}-1\right)\frac{\phi^2}{k^2}\right)-\frac{\left(\frac{2}{k}+2\left(\frac{k(k-1)}{2}-1\right)\frac{\phi}{k^2}\right)^2}{\frac{2\phi}{k}+\left(\frac{k(k-1)}{2}-1\right)\frac{\phi^2}{k^2}}\\
&&+\sum_{j=1}^{k-1}\frac{2(2k-j+1)}{k^2}\left(1+\log\left(\frac{2\phi}{k}-(2k-j+1)\frac{\phi^2}{k^2}\right)\right)-\sum_{j=1}^{k-1}\frac{\left(\frac{2}{k}-2(2k-j+1)\frac{\phi}{k^2}\right)^2}{\frac{2\phi}{k}-(2k-j+1)\frac{\phi^2}{k^2}}+2\log m\\
&\leq& \frac{1}{1-\phi}+\frac{1}{\phi}-4\log(1-\phi)-6-\frac{2}{k^2}\left(\frac{k(k-1)}{2}-1\right)-\frac{2}{k^2}\left(\frac{k(k-1)}{2}-1\right)\log\left(\frac{2\phi}{k}+\left(\frac{k(k-1)}{2}-1\right)\frac{\phi^2}{k^2}\right)-\frac{\left(\frac{2}{k}+2\left(\frac{k(k-1)}{2}-1\right)\frac{\phi}{k^2}\right)^2}{\frac{2\phi}{k}+\left(\frac{k(k-1)}{2}-1\right)\frac{\phi^2}{k^2}}\\&&+\sum_{j=1}^{k-1}\frac{2(2k-j+1)}{k^2}\left(1+\log\left(\frac{2\phi}{k}-(2k-j+1)\frac{\phi^2}{k^2}\right)\right)+\log k\\
&=& \frac{1}{1-\phi}+\frac{1}{\phi}-4\log(1-\phi)-6-\frac{2}{k^2}\left(\frac{k(k-1)}{2}-1\right)-\frac{2}{k^2}\left(\frac{k(k-1)}{2}-1\right)\log\left(\frac{2\phi}{k}+\left(\frac{k(k-1)}{2}-1\right)\frac{\phi^2}{k^2}\right)-\frac{\left(\frac{2}{k}+2\left(\frac{k(k-1)}{2}-1\right)\frac{\phi}{k^2}\right)^2}{\frac{2\phi}{k}+\left(\frac{k(k-1)}{2}-1\right)\frac{\phi^2}{k^2}}\\&&+(1-\log k)\frac{3k^2-k-2}{k^2}+2\sum_{j=0}^{k-1}\frac{1}{k}\left(1+\frac{k-j}{k}\right)\log\left(2\phi-\phi^2-(k-j)\frac{\phi^2}{k}\right)-2\frac{1}{k}\left(1+\frac{1}{k}\right)\log\left(2\phi-\phi^2-\frac{\phi^2}{k}\right)+\log k.
\end{eqnarray*}
We may bound the latter quantity from above to obtain
\begin{eqnarray*}
&&\frac{d^2}{d\phi^2}F_{k,m}\left(\delta_{\phi}\right)\\
&\leq& \frac{1}{1-\phi}+\frac{1}{\phi}-4\log(1-\phi)-6-\frac{2}{k^2}\left(\frac{k(k-1)}{2}-1\right)-\frac{2}{k^2}\left(\frac{k(k-1)}{2}-1\right)\log\left(\frac{2\phi}{k}+\left(\frac{k(k-1)}{2}-1\right)\frac{\phi^2}{k^2}\right)-\frac{\left(\frac{2}{k}+2\left(\frac{k(k-1)}{2}-1\right)\frac{\phi}{k^2}\right)^2}{\frac{2\phi}{k}+\left(\frac{k(k-1)}{2}-1\right)\frac{\phi^2}{k^2}}\\&&+(1-\log k)\frac{3k^2-k-2}{k^2}+2\int_0^1(2-x)\log(2\phi-(2-x)\phi^2)dx-2\frac{1}{k}\left(1+\frac{1}{k}\right)\log\left(2\phi-\phi^2-\frac{\phi^2}{k}\right)+\log k\\
&=& \frac{1}{1-\phi}+\frac{1}{\phi}-4\log(1-\phi)-6-\frac{2}{k^2}\left(\frac{k(k-1)}{2}-1\right)-\frac{2}{k^2}\left(\frac{k(k-1)}{2}-1\right)\log\left(\frac{2\phi}{k}+\left(\frac{k(k-1)}{2}-1\right)\frac{\phi^2}{k^2}\right)-\frac{\left(\frac{2}{k}+2\left(\frac{k(k-1)}{2}-1\right)\frac{\phi}{k^2}\right)^2}{\frac{2\phi}{k}+\left(\frac{k(k-1)}{2}-1\right)\frac{\phi^2}{k^2}}\\&&+(1-\log k)\frac{3k^2-k-2}{k^2}-\frac{8(1-\phi^2)\log(2(1-\phi))+2(4-\phi^2)\log(2-\phi)+\phi(3\phi-6\phi\log\phi+4)}{4\phi^2}-2\frac{1}{k}\left(1+\frac{1}{k}\right)\log\left(2\phi-\phi^2-\frac{\phi^2}{k}\right)+\log k
\end{eqnarray*}
\end{small}

In order to show that this is negative for $k\geq 2$ and $0<\phi<\frac{1}{2}$, we first show that it is an increasing function of $\phi$. Indeed, it has partial derivative with respect to $\phi$ equal to
\begin{small}
\begin{eqnarray*}
&&\frac{1}{(1-\phi)^2}-\frac{1}{\phi^2}+\frac{4}{1-\phi}-\frac{2}{k^2}\left(\frac{k(k-1)}{2}-1\right)\frac{2+2\left(\frac{k(k-1)}{2}-1\right)\frac{\phi}{k}}{2\phi+\left(\frac{k(k-1)}{2}-1\right)\frac{\phi^2}{k}}+\frac{16(2k+\phi(k^2-k-2))}{\phi^2(4k+\phi(k^2-k-2))^2}\\&&+\frac{4(\phi+\phi^2+\log(2)+\log(1-\phi)+\log(2-\phi))}{\phi^3}-2\frac{1}{k}\left(1+\frac{1}{k}\right)\frac{2-2\phi-\frac{2\phi}{k}}{2\phi-\phi^2-\frac{\phi^2}{k}}\\
&>& \frac{1}{(1-\phi)^2}-\frac{1}{\phi^2}+\frac{4}{1-\phi}+\frac{4(\phi+\phi^2+\log(2)+\log(1-\phi)+\log(2-\phi))}{\phi^3}-\frac{2}{\phi}>0,
\end{eqnarray*}
\end{small}
where the last inequality is true for every $\phi>0$. Therefore,
\begin{small}
\begin{eqnarray*}
&&\frac{1}{1-\phi}+\frac{1}{\phi}-4\log(1-\phi)-6-\frac{2}{k^2}\left(\frac{k(k-1)}{2}-1\right)-\frac{2}{k^2}\left(\frac{k(k-1)}{2}-1\right)\log\left(\frac{2\phi}{k}+\left(\frac{k(k-1)}{2}-1\right)\frac{\phi^2}{k^2}\right)-\frac{\left(\frac{2}{k}+2\left(\frac{k(k-1)}{2}-1\right)\frac{\phi}{k^2}\right)^2}{\frac{2\phi}{k}+\left(\frac{k(k-1)}{2}-1\right)\frac{\phi^2}{k^2}}\\&&+(1-\log k)\frac{3k^2-k-2}{k^2}-\frac{8(1-\phi^2)\log(2(1-\phi))+2(4-\phi^2)\log(2-\phi)+\phi(3\phi-6\phi\log\phi+4)}{4\phi^2}-2\frac{1}{k}\left(1+\frac{1}{k}\right)\log\left(2\phi-\phi^2-\frac{\phi^2}{k}\right)
\end{eqnarray*}
\end{small}
is an increasing function of $\phi$. Since it is also negative for negative for $\phi=\frac{1}{2}$ when $k\geq 2$, it is negative for every $k\geq 2$ and every $0<\phi<\frac{1}{2}$. From the above calculations, $\frac{d^2}{d\phi^2}F_{k,m}\left(\delta_{\phi}\right)$ is bounded above by this quantity. This implies the concavity of $F_{k,m}\left(\delta_{\phi}\right)$ in $0\leq\phi\leq\frac{1}{2}$ for integers $k\geq 2$ and $1\leq m\leq\sqrt{k}$. The conclusion follows.
\end{proof}

\section{Type II optimization}
For type II measures $\left(1-\frac{\phi}{x}\right)\delta_0+\frac{\phi}{x}\delta_x$, we have
\begin{small}
\begin{eqnarray*}&&F_{k,m}\left(\left(1-\frac{\phi}{x}\right)\delta_0+\frac{\phi}{x}\delta_x\right)\\&=&2\frac{\phi}{x}\left(1-\frac{\phi}{x}\right)g_{k,m}\left(\frac{x}{k},0\right)+\left(\frac{\phi}{x}\right)^2g_{k,m}\left(\frac{x}{k},\frac{x}{k}\right)+\left(1-\frac{\phi}{x}\right)^2g_{k,m}(0,0)-\frac{\phi}{x}h_{k,m}\left(\frac{x}{k}\right)-\left(1-\frac{\phi}{x}\right)h_{k,m}(0)\\
&=&-\left(\frac{\phi}{x}\right)^2\left((1-x)^2\log((1-x)^2)+\left(2\frac{x}{k}+\left(\frac{k(k-1)}{2}-1\right)\frac{x^2}{k^2}\right)\log\left(2\frac{x}{k}+\left(\frac{k(k-1)}{2}-1\right)\frac{x^2}{k^2}\right)\right)\\
&&-\left(\frac{\phi}{x}\right)^2\sum_{j=1}^{k-1}\left(2\frac{x}{k}-(2k-j+1)\frac{x^2}{k^2}\right)\log\left(2\frac{x}{k}-(2k-j+1)\frac{x^2}{k^2}\right)-\left(\frac{\phi}{x}\right)\left(1-2\frac{\phi}{x}\right)\left(x\log\left(\frac{x}{k}\right)+(1-x)\log(1-x)\right)-\phi(1-\phi)\log m
\end{eqnarray*}
\end{small}

\begin{proposition}For integers $k\geq 5$ and $1\leq m\leq \sqrt{k}$, and for every $0<\phi<\frac{1}{2}$ and $x\in[\phi,1]$, we have
\[F_{k,m}\left(\left(1-\frac{\phi}{x}\right)\delta_0+\frac{\phi}{x}\delta_x\right)>0.\]
\end{proposition}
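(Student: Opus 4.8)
Write $p:=\phi/x$, so the measure is $\nu=(1-p)\delta_0+p\,\delta_x$ with $px=\phi$. The key point is that $F_{k,m}(\nu)$ is a quadratic polynomial in $\phi$ with vanishing constant term: substituting $\nu$ into the formula for $F_{k,m}$ of a two-atom measure and using the elementary identities $g_{k,m}(0,0)=h_{k,m}(0)=\log m$ and $h_{k,m}(t)=g_{k,m}(t,0)$, the $\log m$ terms and the $\phi$-independent part collapse and one obtains
\[
F_{k,m}(\nu)=\phi\,B(x)+\phi^2A(x),\qquad
B(x)=\log\tfrac km-\log x-\tfrac{1-x}{x}\log(1-x),\qquad
A(x)=\tfrac1{x^2}\Bigl(g_{k,m}(\tfrac xk,\tfrac xk)-2g_{k,m}(\tfrac xk,0)+\log m\Bigr),
\]
the constant term vanishing since $\nu=\delta_0$ when $\phi=0$ and $F_{k,m}(\delta_0)=0$. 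As $\phi>0$, it suffices to prove that the affine (in $\phi$) function $\psi_x(\phi):=A(x)\phi+B(x)$ is positive on the admissible range of $\phi$. First, $B(x)>0$ on $(0,1]$: indeed $\log(k/m)\ge0$ because $m\le\sqrt k<k$, while $-\log x>0$ and $-\tfrac{1-x}{x}\log(1-x)>0$ for $x\in(0,1)$, and $B(1)=\log(k/m)>0$. Hence $\psi_x(0)=B(x)>0$, so by affinity it remains to check $\psi_x$ at the right-hand end of the $\phi$-range, namely $\phi=\min\{x,\tfrac12\}$.

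\textbf{Case $x\le\tfrac12$.} The relevant value is $\psi_x(x)=A(x)x+B(x)$; evaluating the quadratic at $\phi=x$, where $\nu$ degenerates to $\delta_x$, gives $x^2A(x)+xB(x)=F_{k,m}(\delta_x)$, so $\psi_x(x)=F_{k,m}(\delta_x)/x$, which is positive by the Type~I Proposition when $0<x<\tfrac12$ and by $F_{k,m}(\delta_{1/2})>0$ (established inside the Type~I proof) when $x=\tfrac12$. Affinity of $\psi_x$ together with positivity at both endpoints $\phi=0$ and $\phi=\min\{x,\tfrac12\}$ then gives $\psi_x>0$ throughout.

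\textbf{Case $x>\tfrac12$.} Now the endpoint is $\phi=\tfrac12$, and since $\psi_x(0)>0$ the claim reduces to $\psi_x(\tfrac12)=\tfrac12A(x)+B(x)\ge0$ on $[\tfrac12,1]$, i.e.\ $F_{k,m}\bigl((1-\tfrac1{2x})\delta_0+\tfrac1{2x}\delta_x\bigr)\ge0$. A short computation --- the $-2(1-x)^2\log(1-x)$ pieces coming from $g_{k,m}(\tfrac xk,\tfrac xk)$ and from $g_{k,m}(\tfrac xk,0)$ cancel --- yields the clean formula
\[
A(x)+2B(x)=\frac{\Sigma(x)}{x^2}+\frac{2(1-x)}{x}\log\frac xk-\log m,\qquad
\Sigma(x):=\eta\!\Bigl(\tfrac{2x}k+c\tfrac{x^2}{k^2}\Bigr)+\sum_{j=1}^{k-1}\eta\!\Bigl(\tfrac{2x}k-(2k-j+1)\tfrac{x^2}{k^2}\Bigr)\ \ge\ 0,
\]
where $\eta(t):=-t\log t$, $c:=\tfrac{k(k-1)}2-1$, and every argument of $\eta$ lies in $[0,1]$. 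At $x=\tfrac12$ this equals $4F_{k,m}(\delta_{1/2})>0$ by Type~I; at $x=1$ it equals $g_{k,m}(\tfrac1k,\tfrac1k)-\log m=\eta\!\Bigl(\tfrac{k^2+3k-2}{2k^2}\Bigr)+\sum_{i=1}^{k-2}\eta(i/k^2)-\log m$, which one checks is positive for small $k$ and then for all $k$ by comparing $\sum_i\eta(i/k^2)$ with an integral and differentiating in $k$, exactly as for $F_{k,m}(\delta_{1/2})$ in the Type~I section (the slack is a fixed positive constant $\approx0.6$). Using $\log m\le\tfrac12\log k$, the interior of $[\tfrac12,1]$ then reduces to the $k$-uniform inequality $\Sigma(x)+2x(1-x)\log x\ge\tfrac{x(4-3x)}2\log k$.

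The main obstacle is precisely this interpolation across $(\tfrac12,1)$. I would handle it by the calculus of the Type~I section: show that $x\mapsto\tfrac12A(x)+B(x)$ has a controllable second derivative on $[\tfrac12,1]$ --- after the ``multiply by a convenient function'' normalization used in the proof of Lemma~\ref{measuretypes} --- so that its minimum is attained at the two endpoints already treated. Near $x=1$ the logarithmic singularities combine favourably (because $\tfrac12<\tfrac{k}{2(k-1)}$, the coefficient of $(1-x)^{-1}$ in the second derivative is negative, so the function is concave there), while away from $x=1$ one is bounding a bounded expression. What makes this genuinely harder than Type~I is the sum $\sum_{j=1}^{k-1}\eta(\,\cdot\,)$ inside $\Sigma(x)$, which must be controlled uniformly in $k$ via an integral comparison; the hypothesis $m\le\sqrt k$ enters exactly through $\log m\le\tfrac12\log k$, and $k\ge5$ is needed to close the endpoint estimates.
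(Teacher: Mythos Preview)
Your decomposition is genuinely different from the paper's and starts well. The paper fixes $\phi$ and proves that $\tau(x):=x\,F_{k,m}\bigl((1-\tfrac{\phi}{x})\delta_0+\tfrac{\phi}{x}\delta_x\bigr)$ is concave in $x$ on $[\phi,1]$, then checks the endpoints $x=\phi$ (Type~I) and $x=1$. You instead fix $x$ and exploit that $F_{k,m}(\nu)$ is quadratic in $\phi$ with vanishing constant term, reducing to the affine function $\psi_x(\phi)=A(x)\phi+B(x)$. Your identities $g_{k,m}(t,0)=h_{k,m}(t)$, $g_{k,m}(0,0)=\log m$, and the resulting formulas for $A(x),B(x)$ are correct, and the treatment of $B(x)>0$ and of the endpoint $\phi=x$ when $x\le\tfrac12$ (reducing to Type~I) is clean.

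The gap is the case $x\in(\tfrac12,1)$. There you must prove $\psi_x(\tfrac12)=\tfrac12A(x)+B(x)\ge 0$, and you only verify this at $x=\tfrac12$ and $x=1$; the interior is dealt with by the sentence ``I would handle it by the calculus of the Type~I section \ldots'' together with a heuristic about the sign of the $(1-x)^{-1}$ coefficient near $x=1$. That is precisely where all the work lies. What you are left with is a one-variable inequality in $x$ whose proof requires the same kind of second-derivative control (including the integral comparison for $\sum_{j}\eta(\cdot)$ uniformly in $k$) that the paper carries out for $\tau(x)$. In fact, since $\psi_x(\tfrac12)=2F_{k,m}(\nu)\big|_{\phi=1/2}$, your residual problem is exactly the Type~II inequality at the boundary $\phi=\tfrac12$, and the paper's concavity computation for $\tau''(x)$ (which at $\phi=\tfrac12$ still yields $\tau''\le 0$ via the step $\tfrac{x}{\phi}\ge 2x$) is what would close it. So your approach does not avoid the hard step; it relocates it to the single slice $\phi=\tfrac12$, and that slice still needs the full concavity argument you have not supplied.
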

\begin{proof}First, note that when $x=\phi$, we have $\left(1-\frac{\phi}{x}\right)\delta_0+\frac{\phi}{x}\delta_x=\delta_{\phi}$ reducing to type I measures addressed in the previous section. When $x=1$, we have
\begin{small}
\begin{eqnarray*}
&&F_{k,m}\left(\left(1-\phi\right)\delta_0+\phi\delta_1\right)\\&=&-\phi^2\left(\left(2\frac{1}{k}+\left(\frac{k(k-1)}{2}-1\right)\frac{1}{k^2}\right)\log\left(2\frac{1}{k}+\left(\frac{k(k-1)}{2}-1\right)\frac{1}{k^2}\right)+\sum_{j=1}^{k-1}\left(2\frac{1}{k}-(2k-j+1)\frac{1}{k^2}\right)\log\left(2\frac{1}{k}-(2k-j+1)\frac{1}{k^2}\right)\right)\\&&+\phi\left(1-2\phi\right)\log k-\phi(1-\phi)\log m\\
&=& -\phi^2\left(\left(\frac{k^2+3k-2}{2k^2}\right)\log\left(\frac{k^2+3k-2}{2k^2}\right)-\left(\frac{k-1}{k^2}\right)\log\left(\frac{k-1}{k^2}\right)-\sum_{j=0}^{k-1}\frac{j\log k}{k^2}+\sum_{j=0}^{k-1}\frac{1}{k}\cdot\frac{j}{k}\log\left(\frac{j}{k}\right)\right)+\phi\left(1-2\phi\right)\log k-\phi(1-\phi)\log m\\
&\geq& -\phi^2\left(\left(\frac{k^2+3k-2}{2k^2}\right)\log\left(\frac{k^2+3k-2}{2k^2}\right)-\left(\frac{k-1}{k^2}\right)\log\left(\frac{k-1}{k^2}\right)-\frac{(k-1)\log k}{2k}+\sum_{j=0}^{k-1}\frac{1}{k}\cdot\frac{j}{k}\log\left(\frac{j}{k}\right)\right)+\phi\left(1-2\phi\right)\log k-\phi(1-\phi)\log \lfloor\sqrt{k}\rfloor\\
&=& -\phi\left(\phi\left(\left(\frac{k^2+3k-2}{2k^2}\right)\log\left(\frac{k^2+3k-2}{2k^2}\right)-\left(\frac{k-1}{k^2}\right)\log\left(\frac{k-1}{k^2}\right)-\frac{(k-1)\log k}{2k}+\sum_{j=0}^{k-1}\frac{1}{k}\cdot\frac{j}{k}\log\left(\frac{j}{k}\right)+\frac{3}{2}\log k\right)-\frac{1}{2}\log k\right)+\phi(1-\phi)\log\left(\frac{\sqrt{k}}{\lfloor\sqrt{k}\rfloor}\right).
\end{eqnarray*}
\end{small}
We claim that this is positive for $k\geq 5$ and $0<\phi<\frac{1}{2}$. We first show it for $k\geq 13$. Since the first term is a quadratic in $\phi$ with $\phi=0$ as a root, this would follow if it is concave and the other root is to the right of $\frac{1}{2}$. This would follow from having
\begin{small}
\begin{equation}\label{quadraticineq}0<\left(\frac{k^2+3k-2}{2k^2}\right)\log\left(\frac{k^2+3k-2}{2k^2}\right)-\left(\frac{k-1}{k^2}\right)\log\left(\frac{k-1}{k^2}\right)-\frac{(k-1)\log k}{2k}+\sum_{j=0}^{k-1}\frac{1}{k}\cdot\frac{j}{k}\log\left(\frac{j}{k}\right)+\frac{3}{2}\log k< \log k
\end{equation}
\end{small}
for $k\geq 6$. Indeed, using
\begin{small}
\[\sum_{j=0}^{k-1}\frac{1}{k}\cdot\frac{j}{k}\log\left(\frac{j}{k}\right)\geq -\frac{(k-1)\log k}{2k},\]
\end{small}
we obtain
\begin{small}
\begin{eqnarray*}
&&\left(\frac{k^2+3k-2}{2k^2}\right)\log\left(\frac{k^2+3k-2}{2k^2}\right)-\left(\frac{k-1}{k^2}\right)\log\left(\frac{k-1}{k^2}\right)-\frac{(k-1)\log k}{2k}+\sum_{j=0}^{k-1}\frac{1}{k}\cdot\frac{j}{k}\log\left(\frac{j}{k}\right)+\frac{3}{2}\log k\\
&\geq& \left(\frac{k^2+3k-2}{2k^2}\right)\log\left(\frac{k^2+3k-2}{2k^2}\right)-\left(\frac{k-1}{k^2}\right)\log\left(\frac{k-1}{k^2}\right)-\frac{(k-1)\log k}{k}+\frac{3}{2}\log k.
\end{eqnarray*}
\end{small}
This is positive for $k\geq 2$, and so the left inequality in~\eqref{quadraticineq} is true. For the right hand inequality in~\eqref{quadraticineq}, note that
\begin{small}
\begin{eqnarray*}
&&\left(\frac{k^2+3k-2}{2k^2}\right)\log\left(\frac{k^2+3k-2}{2k^2}\right)-\left(\frac{k-1}{k^2}\right)\log\left(\frac{k-1}{k^2}\right)-\frac{(k-1)\log k}{2k}+\sum_{j=0}^{k-1}\frac{1}{k}\cdot\frac{j}{k}\log\left(\frac{j}{k}\right)+\frac{3}{2}\log k-\log k\\
&\leq& \left(\frac{k^2+3k-2}{2k^2}\right)\log\left(\frac{k^2+3k-2}{2k^2}\right)-\left(\frac{k-1}{k^2}\right)\log\left(\frac{k-1}{k^2}\right)-\frac{(k-1)\log k}{2k}+\frac{1}{2}\log k.
\end{eqnarray*}
\end{small}
This is negative for every $k\geq 13$. Furthermore, we can check that for $6\leq k\leq 12$, we also have that
\begin{small}
\begin{eqnarray*}
\left(\frac{k^2+3k-2}{2k^2}\right)\log\left(\frac{k^2+3k-2}{2k^2}\right)-\left(\frac{k-1}{k^2}\right)\log\left(\frac{k-1}{k^2}\right)-\frac{(k-1)\log k}{2k}+\sum_{j=0}^{k-1}\frac{1}{k}\cdot\frac{j}{k}\log\left(\frac{j}{k}\right)+\frac{3}{2}\log k-\log k< 0.
\end{eqnarray*}
\end{small}
We obtain that the inequalities in~\eqref{quadraticineq} are true for $k\geq 6$. We also easily check that
\begin{small}
\[F_{k,m}\left(\left(1-\phi\right)\delta_0+\phi\delta_1\right)\geq F_{k,\lfloor\sqrt{k}\rfloor}\left(\left(1-\phi\right)\delta_0+\phi\delta_1\right)>0\]
\end{small}
for $k=5$, $1\leq m\leq \sqrt{5}$, and $0<\phi<\frac{1}{2}$. Given that 
\begin{small}
\[F_{k,m}\left(\left(1-\frac{\phi}{x}\right)\delta_0+\frac{\phi}{x}\delta_x\right)>0\]
\end{small}
for integers $k\geq 5$ and $1\leq m\leq\sqrt{k}$ when $x\in\{\phi,1\}$ and $0<\phi<\frac{1}{2}$, the Lemma would be proved if we show that
\begin{small}
\[\tau(x):=xF_{k,m}\left(\left(1-\frac{\phi}{x}\right)\delta_0+\frac{\phi}{x}\delta_x\right)\]
\end{small}
is concave in $x\in[\phi,1]$ for such $k,m,\phi$. We calculate $\tau''(x)$.
\begin{small}
\begin{eqnarray*}
&&\tau'(x)\\
&=&\left(\frac{\phi}{x}\right)^2\left((1-x)^2\log((1-x)^2)+\left(2\frac{x}{k}+\left(\frac{k(k-1)}{2}-1\right)\frac{x^2}{k^2}\right)\log\left(2\frac{x}{k}+\left(\frac{k(k-1)}{2}-1\right)\frac{x^2}{k^2}\right)+\sum_{j=1}^{k-1}(2k-j+1)\frac{x^2}{k^2}\log\left(2\frac{x}{k}-(2k-j+1)\frac{x^2}{k^2}\right)\right)\\
&&-x\left(\frac{\phi}{x}\right)^2\left(-2(1-x)(1+\log((1-x)^2))+\left(\frac{2}{k}+2\left(\frac{k(k-1)}{2}-1\right)\frac{x}{k^2}\right)\left(1+\log\left(2\frac{x}{k}+\left(\frac{k(k-1)}{2}-1\right)\frac{x^2}{k^2}\right)\right)\right)\\
&&-x\left(\frac{\phi}{x}\right)^2\sum_{j=1}^{k-1}\left(\frac{2}{k}-2(2k-j+1)\frac{x}{k^2}\right)-2\left(\frac{\phi}{x}\right)^2\left(x\log\left(\frac{x}{k}\right)+(1-x)\log(1-x)\right)-\phi\left(1-2\frac{\phi}{x}\right)\log\left(\frac{x}{k(1-x)}\right)-\phi(1-\phi)\log m,
\end{eqnarray*}
\end{small}
from which we obtain
\begin{small}
\begin{eqnarray*}
&&\tau''(x)\\
&=&\left(\frac{\phi}{x}\right)^2\left(-4+\left(\frac{4}{k}+\frac{2x}{k^2}\left(\frac{k(k-1)}{2}-1\right)\right)\right)+\left(\frac{\phi}{x}\right)^2\left(-2x-x\frac{\left(\frac{2}{k}+2\left(\frac{k(k-1)}{2}-1\right)\frac{x}{k^2}\right)^2}{2\frac{x}{k}+\left(\frac{k(k-1)}{2}-1\right)\frac{x^2}{k^2}}\right)\\
&+&\left(\frac{\phi}{x}\right)^2\left(\frac{x(3k^2-k-2)}{k^2}+\frac{2(k-1)}{k}-4\sum_{j=0}^{k-1}\frac{1}{k}\cdot\frac{1}{2-\left(2-\frac{j}{k}\right)x}+\frac{4}{2k-(k+1)x}\right)+\left(\frac{\phi}{x}\right)^2\frac{2(k-1)}{k}-\frac{\phi}{x(1-x)}\left(1-2\frac{\phi}{x}\right)\\
&\leq& \left(\frac{\phi}{x}\right)^2\left(-4+\left(\frac{4}{k}+\frac{2x}{k^2}\left(\frac{k(k-1)}{2}-1\right)\right)\right)+\left(\frac{\phi}{x}\right)^2\left(-2x-x\frac{\left(\frac{2}{k}+2\left(\frac{k(k-1)}{2}-1\right)\frac{x}{k^2}\right)^2}{2\frac{x}{k}+\left(\frac{k(k-1)}{2}-1\right)\frac{x^2}{k^2}}\right)\\
&&+\left(\frac{\phi}{x}\right)^2\left(\frac{x(3k^2-k-2)}{k^2}+\frac{2(k-1)}{k}-\frac{4}{x}\log\left(\frac{2-x}{2(1-x)}\right)+\frac{4}{2k-(k+1)x}\right)+\left(\frac{\phi}{x}\right)^2\frac{2(k-1)}{k}-\frac{\phi}{x(1-x)}\left(1-2\frac{\phi}{x}\right)\\
&=&\left(\frac{\phi}{x}\right)^2\left(\frac{2x}{k^2}\left(\frac{k(k-1)}{2}-1\right)+\left(-2x-x\frac{\left(\frac{2}{k}+2\left(\frac{k(k-1)}{2}-1\right)\frac{x}{k^2}\right)^2}{2\frac{x}{k}+\left(\frac{k(k-1)}{2}-1\right)\frac{x^2}{k^2}}\right)+\frac{x(3k^2-k-2)}{k^2}-\frac{4}{x}\log\left(\frac{2-x}{2(1-x)}\right)+\frac{4}{2k-(k+1)x}\right)\\
&&-\frac{\phi}{x(1-x)}\left(1-2\frac{\phi}{x}\right)\\
&\leq& \left(\frac{\phi}{x}\right)^2\left(2x+\left(-x\frac{\left(\frac{2}{k}+2\left(\frac{k(k-1)}{2}-1\right)\frac{x}{k^2}\right)^2}{2\frac{x}{k}+\left(\frac{k(k-1)}{2}-1\right)\frac{x^2}{k^2}}\right)-\frac{4}{x}\log\left(\frac{2-x}{2(1-x)}\right)+\frac{4}{2k-(k+1)x}-\frac{1}{1-x}\left(\frac{x}{\phi}-2\right)\right).
\end{eqnarray*}
\end{small}
Since $0<\phi<\frac{1}{2}$, we have $\frac{x}{\phi}> 2x$ and so
\begin{small}
\[\tau''(x)< \left(\frac{\phi}{x}\right)^2\left(2x+\left(-x\frac{\left(\frac{2}{k}+2\left(\frac{k(k-1)}{2}-1\right)\frac{x}{k^2}\right)^2}{2\frac{x}{k}+\left(\frac{k(k-1)}{2}-1\right)\frac{x^2}{k^2}}\right)-\frac{4}{x}\log\left(\frac{2-x}{2(1-x)}\right)+\frac{4}{2k-(k+1)x}+2\right).\]
\end{small}
For $k\geq 4$, this is decreasing in $k$ for $0\leq x\leq 1$. Therefore, for $0\leq x\leq 1$, we have
\begin{small}
\[\tau''(x)\leq \left(\frac{\phi}{x}\right)^2\left(2x-\frac{\left(\frac{1}{2}+\frac{5x}{8}\right)^2}{\frac{1}{2}+\frac{5x}{16}}-\frac{4}{x}\log\left(\frac{2-x}{2(1-x)}\right)+\frac{4}{8-5x}+2\right)\leq 0.\]
\end{small}
The latter inequality follows from the fact that
\begin{small}
\[2x-\frac{\left(\frac{1}{2}+\frac{5x}{8}\right)^2}{\frac{1}{2}+\frac{5x}{16}}-\frac{4}{x}\log\left(\frac{2-x}{2(1-x)}\right)+\frac{4}{8-5x}+2\]
\end{small}
is a decreasing function that vanishes at $x=0$. We conclude that $\tau''(x)< 0$, as required.
\end{proof}
\section{Type III optimization}
For type III measures $\left(\frac{\phi-x}{1-x}\right)\delta_1+\left(\frac{1-\phi}{1-x}\right)\delta_x$, we have
\begin{small}
\begin{eqnarray*}&&F_{k,m}\left(\left(\frac{\phi-x}{1-x}\right)\delta_1+\left(\frac{1-\phi}{1-x}\right)\delta_x\right)\\&=&2\left(\frac{\phi-x}{1-x}\right)\left(\frac{1-\phi}{1-x}\right)g_{k,m}\left(\frac{x}{k},\frac{1}{k}\right)+\left(\frac{1-\phi}{1-x}\right)^2g_{k,m}\left(\frac{x}{k},\frac{x}{k}\right)+\left(\frac{\phi-x}{1-x}\right)^2g_{k,m}\left(\frac{1}{k},\frac{1}{k}\right)-\left(\frac{1-\phi}{1-x}\right)h_{k,m}\left(\frac{x}{k}\right)-\left(\frac{\phi-x}{1-x}\right)h_{k,m}\left(\frac{1}{k}\right)\\
&=& -2\left(\frac{\phi-x}{1-x}\right)\left(\frac{1-\phi}{1-x}\right)\left(\left(\frac{x}{k}+\frac{1}{k}+\left(\frac{k(k-1)}{2}-1\right)\frac{x}{k^2}\right)\log\left(\frac{x}{k}+\frac{1}{k}+\left(\frac{k(k-1)}{2}-1\right)\frac{x}{k^2}\right)+(\log k)\frac{(k-1)(k(x-2)+2x)}{2k^2}\right)\\
&&-2\left(\frac{\phi-x}{1-x}\right)\left(\frac{1-\phi}{1-x}\right)\sum_{j=0}^{k-1}\frac{1}{k}\left(1-\left(1-\frac{j}{k}\right)x\right)\log\left(1-\left(1-\frac{j}{k}\right)x\right)+2\left(\frac{\phi-x}{1-x}\right)\left(\frac{1-\phi}{1-x}\right)\frac{1}{k}\left(1-\frac{x}{k}\right)\log\left(1-\frac{x}{k}\right)\\
&&-\left(\frac{1-\phi}{1-x}\right)^2\left((1-x)^2\log((1-x)^2)+\left(2\frac{x}{k}+\left(\frac{k(k-1)}{2}-1\right)\frac{x^2}{k^2}\right)\log\left(2\frac{x}{k}+\left(\frac{k(k-1)}{2}-1\right)\frac{x^2}{k^2}\right)+\left(\log k\right)\frac{x(k-1)(2x+k(3x-4))}{2k^2}\right)\\
&&-\left(\frac{1-\phi}{1-x}\right)^2\sum_{j=0}^{k-1}\frac{1}{k}\left(2x-\left(2-\frac{j}{k}\right)x^2\right)\log\left(2-\left(2-\frac{j}{k}\right)x\right)+(\log x)\left(\frac{1-\phi}{1-x}\right)^2\frac{x(x+k(3x-4))}{2k}\\&&+\frac{1}{k}\left(\frac{1-\phi}{1-x}\right)^2\left(2x-(k+1)\frac{x^2}{k}\right)\log\left(2x-(k+1)\frac{x^2}{k}\right)\\&&-\left(\frac{\phi-x}{1-x}\right)^2\left(\left(\frac{2}{k}+\left(\frac{k(k-1)}{2}-1\right)\frac{1}{k^2}\right)\log\left(\frac{2}{k}+\left(\frac{k(k-1)}{2}-1\right)\frac{1}{k^2}\right)-\left(\log k\right)\frac{(k-1)(k-2)}{2k^2}\right)\\
&&-\left(\frac{\phi-x}{1-x}\right)^2\sum_{j=0}^{k-1}\frac{1}{k}\left(\frac{j}{k}\right)\log\left(\frac{j}{k}\right)+\left(\frac{\phi-x}{1-x}\right)^2\frac{1}{k}\left(\frac{k-1}{k}\right)\log\left(\frac{k-1}{k}\right)\\
&&+\left(\frac{1-\phi}{1-x}\right)\left(x\log\left(\frac{x}{k}\right)+(1-x)\log(1-x)\right)-\left(\frac{\phi-x}{1-x}\right)\log k-\phi(1-\phi)\log m.
\end{eqnarray*}
This is at least
\begin{eqnarray*}
&&-2\left(\frac{\phi-x}{1-x}\right)\left(\frac{1-\phi}{1-x}\right)\left(\left(\frac{x}{k}+\frac{1}{k}+\left(\frac{k(k-1)}{2}-1\right)\frac{x}{k^2}\right)\log\left(\frac{x}{k}+\frac{1}{k}+\left(\frac{k(k-1)}{2}-1\right)\frac{x}{k^2}\right)+(\log k)\frac{(k-1)(k(x-2)+2x)}{2k^2}\right)\\
&&-2\left(\frac{\phi-x}{1-x}\right)\left(\frac{1-\phi}{1-x}\right)\int_0^1(1-(1-t)x)\log(1-(1-t)x)dt+2\left(\frac{\phi-x}{1-x}\right)\left(\frac{1-\phi}{1-x}\right)\frac{1}{k}\left(1-\frac{x}{k}\right)\log\left(1-\frac{x}{k}\right)\\
&&-\left(\frac{1-\phi}{1-x}\right)^2\left((1-x)^2\log((1-x)^2)+\left(2\frac{x}{k}+\left(\frac{k(k-1)}{2}-1\right)\frac{x^2}{k^2}\right)\log\left(2\frac{x}{k}+\left(\frac{k(k-1)}{2}-1\right)\frac{x^2}{k^2}\right)+\left(\log k\right)\frac{x(k-1)(2x+k(3x-4))}{2k^2}\right)\\
&&-\left(\frac{1-\phi}{1-x}\right)^2\int_0^1\left(2x-\left(2-t\right)x^2\right)\log\left(2-\left(2-t\right)x\right)dt+(\log x)\left(\frac{1-\phi}{1-x}\right)^2\frac{x(x+k(3x-4))}{2k}+\frac{1}{k}\left(\frac{1-\phi}{1-x}\right)^2\left(2x-(k+1)\frac{x^2}{k}\right)\log\left(2x-(k+1)\frac{x^2}{k}\right)\\
&&-\left(\frac{\phi-x}{1-x}\right)^2\left(\left(\frac{2}{k}+\left(\frac{k(k-1)}{2}-1\right)\frac{1}{k^2}\right)\log\left(\frac{2}{k}+\left(\frac{k(k-1)}{2}-1\right)\frac{1}{k^2}\right)-\left(\log k\right)\frac{(k-1)(k-2)}{2k^2}\right)\\
&&-\left(\frac{\phi-x}{1-x}\right)^2\sum_{j=0}^{k-1}\frac{1}{k}\left(\frac{j}{k}\right)\log\left(\frac{j}{k}\right)+\left(\frac{\phi-x}{1-x}\right)^2\frac{1}{k}\left(\frac{k-1}{k}\right)\log\left(\frac{k-1}{k}\right)\\
&&+\left(\frac{1-\phi}{1-x}\right)\left(x\log\left(\frac{x}{k}\right)+(1-x)\log(1-x)\right)-\left(\frac{\phi-x}{1-x}\right)\log k-\phi(1-\phi)\log m\\
&=& -2\left(\frac{\phi-x}{1-x}\right)\left(\frac{1-\phi}{1-x}\right)\left(\left(\frac{x}{k}+\frac{1}{k}+\left(\frac{k(k-1)}{2}-1\right)\frac{x}{k^2}\right)\log\left(\frac{x}{k}+\frac{1}{k}+\left(\frac{k(k-1)}{2}-1\right)\frac{x}{k^2}\right)+(\log k)\frac{(k-1)(k(x-2)+2x)}{2k^2}\right)\\
&&-\frac{1}{2}\left(\frac{\phi-x}{1-x}\right)\left(\frac{1-\phi}{1-x}\right)\left(-2+x-\frac{2(1-x)^2\log(1-x)}{x}\right)+2\left(\frac{\phi-x}{1-x}\right)\left(\frac{1-\phi}{1-x}\right)\frac{1}{k}\left(1-\frac{x}{k}\right)\log\left(1-\frac{x}{k}\right)\\
&&-\left(\frac{1-\phi}{1-x}\right)^2\left((1-x)^2\log((1-x)^2)+\left(2\frac{x}{k}+\left(\frac{k(k-1)}{2}-1\right)\frac{x^2}{k^2}\right)\log\left(2\frac{x}{k}+\left(\frac{k(k-1)}{2}-1\right)\frac{x^2}{k^2}\right)+\left(\log k\right)\frac{x(k-1)(2x+k(3x-4))}{2k^2}\right)\\
&&-\frac{1}{4}\left(\frac{1-\phi}{1-x}\right)^2\left(x(3x-4)-8(1-x)^2\log(2(1-x))+2(2-x)^2\log(2-x)\right)\\
&&+(\log x)\left(\frac{1-\phi}{1-x}\right)^2\frac{x(x+k(3x-4))}{2k}+\frac{1}{k}\left(\frac{1-\phi}{1-x}\right)^2\left(2x-(k+1)\frac{x^2}{k}\right)\log\left(2x-(k+1)\frac{x^2}{k}\right)\\
&&-\left(\frac{\phi-x}{1-x}\right)^2\left(\left(\frac{2}{k}+\left(\frac{k(k-1)}{2}-1\right)\frac{1}{k^2}\right)\log\left(\frac{2}{k}+\left(\frac{k(k-1)}{2}-1\right)\frac{1}{k^2}\right)-\left(\log k\right)\frac{(k-1)(k-2)}{2k^2}\right)\\
&&-\left(\frac{\phi-x}{1-x}\right)^2\sum_{j=0}^{k-1}\frac{1}{k}\left(\frac{j}{k}\right)\log\left(\frac{j}{k}\right)+\left(\frac{\phi-x}{1-x}\right)^2\frac{1}{k}\left(\frac{k-1}{k}\right)\log\left(\frac{k-1}{k}\right)\\
&&+\left(\frac{1-\phi}{1-x}\right)\left(x\log\left(\frac{x}{k}\right)+(1-x)\log(1-x)\right)-\left(\frac{\phi-x}{1-x}\right)\log k-\phi(1-\phi)\log m
\end{eqnarray*}
\end{small}

\begin{proposition}For integers $k\geq 5$ and $1\leq m\leq \sqrt{k}$, and for every $0<\phi<\frac{1}{2}$ and $x\in [0,\phi]$, we have
\[F_{k,m}\left(\left(\frac{\phi-x}{1-x}\right)\delta_1+\left(\frac{1-\phi}{1-x}\right)\delta_x\right)>0.\]
\end{proposition}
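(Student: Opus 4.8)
The plan is to follow the template of the Type~I and Type~II propositions: observe that the two endpoints $x=0$ and $x=\phi$ of the parameter interval $[0,\phi]$ are already covered, and then show that a suitable auxiliary function of $x$ is concave on $[0,\phi]$, so that positivity at the endpoints forces positivity throughout.

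The endpoints are essentially done. At $x=\phi$ the Type~III measure is $\delta_\phi$, so the quantity in question equals $F_{k,m}(\delta_\phi)>0$ by the Type~I proposition. At $x=0$ it is $(1-\phi)\delta_0+\phi\delta_1$, which is the Type~II measure with parameter $1$, shown to be positive in the Type~II proposition; moreover at $x=0$ the Riemann-sum-to-integral replacements in the displayed lower bound are exact, so the lower bound there coincides with $F_{k,m}\big((1-\phi)\delta_0+\phi\delta_1\big)$. Throughout, $m$ enters only via the term $-\phi(1-\phi)\log m\le 0$, so it suffices to use $m\le\sqrt k$ wherever this term appears.

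Next I would set $\rho(x):=(1-x)\,F_{k,m}\!\big(\big(\tfrac{\phi-x}{1-x}\big)\delta_1+\big(\tfrac{1-\phi}{1-x}\big)\delta_x\big)$ — the analogue of the multiplier $x$ used in the Type~II proof, now measured from the fixed atom at $1$. Since $\rho(0)=F_{k,m}\big((1-\phi)\delta_0+\phi\delta_1\big)>0$, $\rho(\phi)=(1-\phi)F_{k,m}(\delta_\phi)>0$, and a concave function that is positive at both endpoints of an interval is positive throughout it, the Proposition reduces to the claim that $\rho$ is concave on $[0,\phi]$, i.e.\ that $\rho''(x)\le 0$ there; one then recovers positivity of $F_{k,m}$ of the Type~III measure, which equals $\rho(x)/(1-x)$.

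The hard part is the estimate $\rho''\le 0$. As in Lemma~\ref{measuretypes} and the Type~I/Type~II proofs, I would differentiate $\rho$ twice, collect the many $(\,\cdot\,)\log(\,\cdot\,)$ contributions, and pass to a closed-form upper bound for $\rho''$ using the lower bounds recorded just before the Proposition statement — replacing the finite sums $\sum_{j=0}^{k-1}\tfrac1k f(j/k)$ by the integrals $\int_0^1 f(t)\,dt$ — together with the elementary inequality $\log(1+u)\le u$ and the convexity of $t\mapsto t\log t$; if necessary, multiplying $\rho''$ by a monotone positive factor (the device used for $s f_{k,m}''(s)$ in Lemma~\ref{measuretypes} and for $\tau''$ in the Type~II proof) to expose a manifestly signed derivative. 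The resulting bound should be decreasing in $k$ for $k\ge 5$ — the dominant terms being the negative ones, notably those carrying $-\log k$ — which reduces everything to a single small value of $k$, checked by a one-variable estimate in $x\in[0,\phi]$. Controlling these competing logarithmic terms uniformly in $x$ and $k$ is where essentially all the effort goes; the rest is bookkeeping.
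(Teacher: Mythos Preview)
Your plan diverges from the paper's argument. You propose to show that $\rho(x):=(1-x)\,F_{k,m}$ of the Type~III measure is concave in $x\in[0,\phi]$, mirroring the Type~II device $\tau(x)=x\,F_{k,m}$. The paper does \emph{not} attempt this. Instead it fixes $x,\phi$ and varies $k$: it passes to the lower bound $B(k;x,\phi)$ recorded just before the proposition (Riemann sums replaced by integrals, $m$ replaced by $\sqrt{k}$), shows $\partial_k\bigl(k^3\,\partial_k B\bigr)\ge 0$ for $k\ge 6$ and $\partial_k B>0$ at $k=13$, whence $B$ is increasing in $k$ for $k\ge 13$; then checks $B(13;x,\phi)>0$; and finally treats $5\le k\le 12$ individually. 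Each two-variable verification at fixed $k$ is reduced to one variable by observing strict concavity in $\phi$ (not in $x$) and checking the endpoints $\phi\in\{x,\tfrac12\}$.

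Your proposal therefore rests on the unproved estimate $\rho''\le 0$, and the sketch you give does not establish it. If you mean to apply Riemann-sum-to-integral replacements \emph{inside} the exact expression for $\rho''$, the direction of each such inequality depends on the convexity of the summand as a function of $j/k$, and this sign flips across the various logarithmic terms once you differentiate twice in $x$; it has to be checked term by term and is not automatic from the displayed lower bound on $F_{k,m}$ itself (bounds on a function do not transfer to bounds on its second derivative). The Type~II concavity closed via the inequality $x/\phi>2x$ coming from $\phi<\tfrac12$, and there is no evident analogue in the Type~III parametrisation. The paper's switch to a $k$-monotonicity argument, together with its need to handle $5\le k\le 12$ case by case, is circumstantial evidence that the $x$-concavity route does not go through as cleanly here; at minimum, your outline leaves the decisive step undone.
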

\begin{proof}
We first show the inequality for $k\geq 13$. Let
\begin{small}
\begin{eqnarray*}
&&B(k;x,\phi):=-2\left(\frac{\phi-x}{1-x}\right)\left(\frac{1-\phi}{1-x}\right)\left(\left(\frac{x}{k}+\frac{1}{k}+\left(\frac{k(k-1)}{2}-1\right)\frac{x}{k^2}\right)\log\left(\frac{x}{k}+\frac{1}{k}+\left(\frac{k(k-1)}{2}-1\right)\frac{x}{k^2}\right)+(\log k)\frac{(k-1)(k(x-2)+2x)}{2k^2}\right)\\
&&-\frac{1}{2}\left(\frac{\phi-x}{1-x}\right)\left(\frac{1-\phi}{1-x}\right)\left(-2+x-\frac{2(1-x)^2\log(1-x)}{x}\right)+2\left(\frac{\phi-x}{1-x}\right)\left(\frac{1-\phi}{1-x}\right)\frac{1}{k}\left(1-\frac{x}{k}\right)\log\left(1-\frac{x}{k}\right)\\
&&-\left(\frac{1-\phi}{1-x}\right)^2\left((1-x)^2\log((1-x)^2)+\left(2\frac{x}{k}+\left(\frac{k(k-1)}{2}-1\right)\frac{x^2}{k^2}\right)\log\left(2\frac{x}{k}+\left(\frac{k(k-1)}{2}-1\right)\frac{x^2}{k^2}\right)+\left(\log k\right)\frac{x(k-1)(2x+k(3x-4))}{2k^2}\right)\\
&&-\frac{1}{4}\left(\frac{1-\phi}{1-x}\right)^2\left(x(3x-4)-8(1-x)^2\log(2(1-x))+2(2-x)^2\log(2-x)\right)\\
&&+(\log x)\left(\frac{1-\phi}{1-x}\right)^2\frac{x(x+k(3x-4))}{2k}+\frac{1}{k}\left(\frac{1-\phi}{1-x}\right)^2\left(2x-(k+1)\frac{x^2}{k}\right)\log\left(2x-(k+1)\frac{x^2}{k}\right)\\
&&-\left(\frac{\phi-x}{1-x}\right)^2\left(\left(\frac{2}{k}+\left(\frac{k(k-1)}{2}-1\right)\frac{1}{k^2}\right)\log\left(\frac{2}{k}+\left(\frac{k(k-1)}{2}-1\right)\frac{1}{k^2}\right)-\left(\log k\right)\frac{(k-1)(k-2)}{2k^2}\right)\\
&&+\left(\frac{\phi-x}{1-x}\right)^2\frac{1}{k}\left(\frac{k-1}{k}\right)\log\left(\frac{k-1}{k}\right)+\left(\frac{1-\phi}{1-x}\right)\left(x\log\left(\frac{x}{k}\right)+(1-x)\log(1-x)\right)-\left(\frac{\phi-x}{1-x}\right)\log k-\frac{1}{2}\phi(1-\phi)\log k.
\end{eqnarray*}
\end{small}

Note that
\begin{small}
\[F_{k,m}\left(\left(\frac{\phi-x}{1-x}\right)\delta_1+\left(\frac{1-\phi}{1-x}\right)\delta_x\right)\geq B(k;x,\phi).\]
\end{small}
We calculate
\begin{small}
\begin{eqnarray*}&&k^3\frac{\partial}{\partial k}B(k;x,\phi)\\
&=&-2\left(\frac{\phi-x}{1-x}\right)\left(\frac{1-\phi}{1-x}\right)\left(\frac{4x-k(x+2)}{2}\left(1+\log\left(\frac{x}{k}+\frac{1}{k}+\left(\frac{k(k-1)}{2}-1\right)\frac{x}{k^2}\right)\right)+\frac{(k-1)(k(x-2)+2x)-(k(x+2)-4x)\log k}{2}\right)\\
&&+2\left(\frac{\phi-x}{1-x}\right)\left(\frac{1-\phi}{1-x}\right)\left(x-(k-2x)\log\left(1-\frac{x}{k}\right)\right)\\
&&-\left(\frac{1-\phi}{1-x}\right)^2\left(\frac{x(k(x-4)+4x)}{2}\left(1+\log\left(2\frac{x}{k}+\left(\frac{k(k-1)}{2}-1\right)\frac{x^2}{k^2}\right)\right)+\frac{x((k-1)(k(3x-4)+2x)+(k(x-4)+4x)\log k)}{2}\right)\\
&&-\frac{kx^2\log x}{2}\left(\frac{1-\phi}{1-x}\right)^2+x\left(\frac{1-\phi}{1-x}\right)^2\left(x+(k(x-2)+2x)\log\left(\frac{x(k(2-x)-x)}{k}\right)\right)\\
&&-\left(\frac{\phi-x}{1-x}\right)^2\left(\frac{2-k^2+(4-3k)\log\left(\frac{3-\frac{2}{k}+k}{2}\right)}{2}\right)+\left(\frac{\phi-x}{1-x}\right)^2\left(1-(k-2)\log\left(\frac{k-1}{k}\right)\right)-k^2\phi(1+\frac{1}{2}(1-\phi))
\end{eqnarray*}
\end{small}

Furthermore, we obtain

\begin{small}
\begin{eqnarray*}
&&\frac{\partial}{\partial k}\left(k^3\frac{\partial}{\partial k}B(k;x,\phi)\right)\\
&=&-2\left(\frac{\phi-x}{1-x}\right)\left(\frac{1-\phi}{1-x}\right)\left(\frac{1}{2}\left(\frac{(k(x+2)-4x)^2}{k(k^2x+k(x+2)-2x)}-(x+2)\left(1+\log\left(\frac{k^2x+k(x+2)-2x}{2k^2}\right)\right)\right)+\frac{2x}{k}-\frac{1}{2}(x+2)\log k\right)\\
&&-2\left(\frac{\phi-x}{1-x}\right)\left(\frac{1-\phi}{1-x}\right)\left(\frac{x(k-2x)}{k(k-x)}+\log\left(1-\frac{x}{k}\right)\right)\\
&&-\left(\frac{1-\phi}{1-x}\right)^2\left(\frac{x}{2}\left(\frac{(k(x-4)+4x)^2}{k(k^2x-k(x-4)-2x)}+(x-4)\left(1+\log\left(\frac{x(k^2x-k(x-4)-2x)}{2k^2}\right)\right)\right)+\frac{x(k(x-4)(\log k)+4x)}{2k}\right)\\
&&-\frac{x^2\log x}{2}\left(\frac{1-\phi}{1-x}\right)^2+x\left(\frac{1-\phi}{1-x}\right)^2\left((x-2)\log\left(\frac{x((2-x)k-x)}{k}\right)+\frac{x((2-x)k-2x)}{k(k(x-2)+x)}\right)\\
&&-\frac{1}{2}\left(\frac{\phi-x}{1-x}\right)^2\left(\frac{\left(\frac{2}{k^2}+1\right)(4-3k)}{k+3-\frac{2}{k}}-3\log\left(\frac{1}{2}\left(k+3-\frac{2}{k}\right)\right)\right)-\left(\frac{\phi-x}{1-x}\right)^2\left(\frac{k-2}{k(k-1)}+\log\left(\frac{k-1}{k}\right)\right)+k\phi(1-2\phi)\\
&\geq& -2\left(\frac{\phi-x}{1-x}\right)\left(\frac{1-\phi}{1-x}\right)\left(-\frac{x}{2}+\frac{3x}{k}\right)-\left(\frac{1-\phi}{1-x}\right)^2\left(\frac{x}{2}\left(x-(4-x)\log\left(\frac{x(4-x)}{2}\right)\right)+\frac{3x^2}{k}\right)\\
&&-\frac{x^2\log x}{2}\left(\frac{1-\phi}{1-x}\right)^2+x\left(\frac{1-\phi}{1-x}\right)^2\left((x-2)\log\left(\frac{x((2-x)k-x)}{k}\right)\right)+\frac{3}{2}\left(\frac{\phi-x}{1-x}\right)^2\log\left(\frac{k}{2}\right)+k\phi(1-2\phi)\\
&\geq& -\left(\frac{1-\phi}{1-x}\right)^2\left(\frac{x}{2}\left(x-(4-x)\log\left(\frac{x(4-x)}{2}\right)\right)+\frac{3x^2}{k}+\frac{x^2\log x}{2}-x\left((x-2)\log\left(\frac{x((2-x)k-x)}{k}\right)\right)\right),
\end{eqnarray*}
\end{small}
where the last inequality is true if $k\geq 6$. It is easy to see that this is non-negative for $k\geq 6$. Furthermore, it is easy to show that
\begin{small}
\[\frac{\partial}{\partial k}\Bigg|_{k=13}B(k;x,\phi)>0.\]
\end{small}
To show this, we note that it is a strictly concave function of $0<\phi<\frac{1}{2}$, and so we may verify non-negativity at the endpoints, which is easily done. Therefore, to get the result for $k\geq 13$, it suffices to verify that
\begin{small}
\[B(13;x,\phi)>0\]
\end{small}
which may again be verified by noting that it is strictly concave in $0<\phi<\frac{1}{2}$ and reducing to checking non-negativity at the boundaries. We are reduced to verifying that for $5\leq k\leq 12$, we have the right inequality in
\begin{small}
\[F_{k,m}\left(\left(\frac{\phi-x}{1-x}\right)\delta_1+\left(\frac{1-\phi}{1-x}\right)\delta_x\right)\geq F_{k,\lfloor\sqrt{k}\rfloor}\left(\left(\frac{\phi-x}{1-x}\right)\delta_1+\left(\frac{1-\phi}{1-x}\right)\delta_x\right)>0\]
\end{small}
These may be verified using similar methods.
\end{proof}

\end{document}